\numberwithin{equation}{section}
\newtheorem{theorem}{Theorem}[section]
\newtheorem{lemma}{Lemma}[section]
\newtheorem{definition}{Definition}[section]
\newtheorem{remark}{Remark}[section]
\newtheorem*{proof}{Proof.}
\newtheorem{proposition}{Proposition}[section]
\newtheorem{corollary}{Corollary}[section]
\begin{document}
\global\long\def\b{\mathcal{B}}

\global\long\def\ro{\rho}

\global\long\def\e{\mathcal{E}}

\global\long\def\m{\mathcal{M}}

\global\long\def\vers{\longrightarrow}

\global\long\def\too{\longrightarrow}

\global\long\def\verss{\longmapsto}

\global\long\def\fii{\phi}

\global\long\def\ph{\varphi}

\global\long\def\te{\theta}

\global\long\def\sig{\sigma}

\global\long\def\infi{\infty}

\global\long\def\ds{\in}

\global\long\def\norm#1{\left|#1\right|}

\global\long\def\norme#1{\|#1\|}

\global\long\def\app{]-\infi,0]\vers[0,+\infi[}

\global\long\def\fase#1{]-\infi,#1]}

\global\long\def\teta{\theta}

\global\long\def\om{\Omega}

\global\long\def\sig{\sigma}

\global\long\def\eps{\varepsilon}

\global\long\def\lda{\lambda}

\global\long\def\da{\lambda}

\global\long\def\C{\mathbb{C}}

\global\long\def\R{\mathbb{R}}

\global\long\def\F{\mathbb{F}}

\global\long\def\X{\mathbb{X}}

\global\long\def\x{\mathbb{X}}

\global\long\def\f{\mathbb{F}}

\global\long\def\n{\mathbb{N}}

\global\long\def\N{\mathbb{N}}

\global\long\def\r{\mathbb{R}}

\global\long\def\fb{\mathbb{F}_{b}}

\global\long\def\fbf{\mathbb{F}_{b}\left(\fii\right)}

\title{\textbf{Compact almost automorphic solutions for  semilinear parabolic  evolution equations}}

\author{Brahim Es-sebbar$^1$ 
  \and
Khalil Ezzinbi$^2$
  \and
 Kamal Khalil$^{2,}$\footnote{Corresponding author: \texttt{kamal.khalil@ced.uca.ma}}
%Department of Mathematics, \\ Faculty of Sciences Semlalia, Cadi Ayyad University, \\Marrakesh B.P. 2390-40000, Morocco
%\footnotesize
}

\newcommand{\Addresses}{{% additional braces for segregating \footnotesize
  \bigskip
  \footnotesize

  Brahim Es-Sebbar, \textsc{Cadi Ayyad University, Faculty of Sciences and Technology Gueliz, Marrakech, Morocco
}\par\nopagebreak
  \textit{E-mail address}, B. Es-sebbar: \texttt{essebbar@live.fr }

  \medskip

  Khalil Ezzinbi, \textsc{Department of Mathematics, Faculty of Sciences Semlalia, Cadi Ayyad University, Marrakesh B.P. 2390-40000, Morocco.}\par\nopagebreak
  \textit{E-mail address}, K. Ezzinbi: \texttt{ezzinbi@uca.ac.ma}

  \medskip

  Kamal Khalil, \textit{\textsc{Department of Mathematics, Faculty of Sciences Semlalia, Cadi Ayyad University, Marrakesh B.P. 2390-40000, Morocco.}}\par\nopagebreak
  \textit{E-mail address}, K. Khalil (Corresponding author): \texttt{kamal.khalil@ced.uca.ma}
  \date{{\footnotesize \today}}
 
}}
\newcommand{\Class}{{
 \bigskip
  \footnotesize
\emph{2010 Mathematics subject classification.} Primary 46T20-47J35; Secondary 34C27-35K58.}}
%\title[Vector--valued elliptic operators]{$L^p$--theory for vector--valued elliptic operators}
%\author{M. Kunze}
%\title{\textbf{Almost automorphic solutions \ for nonlinear  evolution equations}}
%\author{K. Khalil}
%\address{Department of Mathematics, Faculty of Sciences Semlalia, Cadi Ayyad University, Marrakesh B.P. 2390-40000, Morocco.}
%\email{kamal.khalil.00@gmail.com}
%\author{A. Maichine}
%\address{Dipartimento di Matematica, Universit\`a degli Studi di Salerno, Via Giovanni Paolo II 132, I-84084 Fisciano (SA), Italy}
%\email{amaichine@unisa.it}
%\author{\textbf{Kamal Khalil}}\\
\maketitle
\begin{center}
{\small $^1$ Cadi Ayyad University, Faculty of Sciences and Technology Gueliz, Marrakech, Morocco.}\\
{\small $^2$ Department of Mathematics, Faculty of Sciences Semlalia, Cadi Ayyad University, Marrakesh B.P. 2390-40000, Morocco.} 
\end{center}

%\date{{\footnotesize \today}}
%\subjclass[2010]{Primary: 35J10, 35J47; Secondary: 47D06, 35J50}
%\begin{center}
%\bigskip\textbf{Brahim ES-SEBBAR{}}$^{1}$, \textbf{Khalil EZZINBI} $^{1}$\textbf{\ and \ Kamal KHALIL{}}$^{1}$  
%\end{center}
%\bigskip
%\begin{flushleft}
%{\small $^{1}$ Department of Mathematics, Faculty of Sciences Semlalia, Cadi Ayyad University, Marrakesh B.P. 2390-40000, Morocco.}
%\end{flushleft}

%		\vspace{.5cm}
		%\textbf{Key Words:} {\it Parabolic evolution equations, Boundary conditions, Hyperbolic semigroups, Extrapolation and interpolation spaces, $\mu $-Pseudo almost periodic functions, $\mu $-$ S^p $-Pseudo almost periodic functions.}

%\medskip
\vspace*{+0.2cm}
\textbf{\abstractname{.}} 
In this paper, using the subvariant functional method due to Favard \cite{Favard}, we prove the existence of a unique compact almost automorphic solution for a class of semilinear evolution equations in Banach spaces provided the existence of at least one bounded solution on the right half line. More specifically, we improve the assumptions in \cite{CieuEzz}, we show that the almost automorphy of the coefficients in a weaker sense (Stepanov almost automorphy of order $1\leq p <\infty$) is enough to obtain solutions that are almost automorphic in a strong sense (Bochner almost automorphy). For that purpose we distinguish two cases, $ p=1 $ and $ p>1$. The main difficulty in this work, is to prove the existence of at least one solution with relatively compact range while the forcing term is not necessarily bounded. Moreover, we propose to study a large class of reaction-diffusion problems with unbounded forcing terms. \\

%For illustration, we propose to study a class of nonautonomous reaction-diffusion problems. We introduce the fact that for  given unbounded forcing term of the problem, we have a unique bounded global solution on the right half-line.\\
%\textbf{\abstractname{.}} In this work, we improve the assumptions in \cite{CieuEzz} for the investigation of almost automorphic solutions for some semilinear evolution equations. We show that the almost automorphy of the coefficients in a weaker sense (Stepanov almost automorphy of order $p$) is enough to obtain solutions that are almost automorphic in a strong sense (Bochner almost automorphy). We distinguish two cases, $ p=1 $ and $ p>1$. An application to a nonautnonomous reaction-diffusion problem with not necessarily bounded forcings is provided to illustrate the theoretical findings. We introduce the fact that, the nonautonomous reaction diffusion problem has a global bounded solution on  provided  that the nonlinearity is not necessarily bounded with respect to time. \\

\textbf{Keywords.}{ Almost automorphic solutions;  $C_{0}$-semigroup; Nonlinear evolution equations; Reaction-diffusion equations; Subvariant functional method; Stepanov almost automorphic functions.}
\section{Introduction}\label{section1Pep3}

The concept of almost automorphy introduced by Bochner \cite{Boch2} is not restricted just to continuous functions. One can generalize that notion to measurable functions with some suitable conditions of integrability, namely, Stepanov almost automorphic functions, see \cite{Moi2,Boch2,EssEzz1}. That is a Stepanov almost automorphic function is neither continuous nor bounded necessarily. 

Now, consider the following semilinear evolution equation in a Banach space $X$:
%In this work, we study the existence of almost automorphic solutions for the following semilinear evolution equation
\begin{equation}
  x'(t)= Ax(t) + f(t, x(t))\quad \text{for}\; t \in \mathbb{R},  
   \label{Eq1SubStep}
 \end{equation}
where $ (A,D(A)) $ is generator of a $ C_{0} $-semigroup $ (T(t))_{t\geq 0} $ on $X$. The function $ f : \mathbb{R}\times X \longrightarrow X $ is locally integrable of order $ 1\leq p < \infty $ in $ t $ and continuous in $ x $.\\
The study of existence of almost periodic and almost automorphic solutions to equation \eqref{Eq1SubStep} in infinite dimensional Banach spaces was deeply investigated in the last decades, we refer to \cite{Moi,Moi2,CieuEzz,Diag1,Het,Langa,CZhang,Zheng,Zhenxin}. Recently, in \cite{Moi}, the authors studied equation  \eqref{Eq1SubStep}, in the parabolic context, that is when $(A,D(A))$ generates an analytic semigroup $ (T(t))_{t\geq 0} $ on a Banach space $X$ which has an exponential dichotomy on $\R$ and $f$ is Stepanov almost periodic of order  $ 1\leq p < \infty $ and Lipschitzian with respect to $x$. They proved the existence and uniqueness of almost periodic solutions for equation \eqref{Eq1SubStep}. In \cite{Diag1}, the authors proved the existence and uniqueness of almost automorphic solutions for equation \eqref{Eq1SubStep} in the case where  $(A,D(A))$ generates an exponentially stable $ C_0 $-semigroup $ (T(t))_{t\geq 0} $ on a Banach space $X$ and $f$ is $S^p$-almost automorphic in $t$ (of order $1<p<\infty$) and Lipschitzian with respect to $x$. Moreover, in \cite{CieuEzz}, the authors proved the existence of compact almost automorphic solutions for equation \eqref{Eq1SubStep} provided that $ (A,D(A)) $ generates a compact $ C_{0} $-semigroup and $f$ is almost automorphic in the classical sense uniformly with respect to $x$. The results are obtained using the subvariant functional method introduced in \cite{Fink}.\\

In this paper, using the subvariant functional method, we give sufficient conditions insuring the existence and uniqueness of compact almost automorphic solutions to equation \eqref{Eq1SubStep}. We assume that the $ C_0 $-semigroup $ (T(t))_{t\geq 0} $ is compact and $f$ is just Stepanov almost automorphic of order $1\leq p  <\infty$. Actually, we begin by introducing a very useful characterization of uniformly  Stepanov almost automorphic functions (Lemma \ref{LemApSpCom1Paper3}). That is a uniformly $S^p $-almost automorphic function is pointwise  $S^p $-almost automorphic and uniformly continuous on compact sets of $X$ in a weak sense. Then we prove that a mild solution $x$ of equation \eqref{Eq1SubStep} given by: 
$$
x(t)=T(t-s)x(s)+\int_{s}^{t}T(t-\sigma)f(\sigma,x(\sigma
))\ d\sigma\qquad\text{ for }t\geq s, \, t,s \in \R 
$$
with relatively compact range is uniformly continuous, see Theorem \ref{Lemma31}. Technically, we distinguish two cases, $ 1<p<\infty $ and $p=1$. Therefore, using Ascoli-Arzela Theorem and an extraction diagonal argument, we construct a mild solution $x$ on $\mathbb{R}$ such that its range is relatively compact, provided  that equation \eqref{Eq1SubStep} has at least a  mild solution $x_0 $ on the right half line with relatively compact range satisfying the following inclusion: 
\begin{eqnarray*}
\lbrace x(t):\; t \in \mathbb{R} \rbrace \subset \overline{ \lbrace x_{0}(t):\; t\geq t_{0} \rbrace },
\end{eqnarray*}
see Lemma \ref{Propostion1}. Note that, the assumption of existence of mild solutions on the right half line with relatively compact range of equation \eqref{Eq1SubStep} is strongly needed  in our work. Consequently, sufficient conditions insuring that assumption are improved in Lemmas \ref{Lemma Relativ Comp p not 1}, \ref{Propostion12} and Proposition \ref{Proposition H5}.  Here, the situation is quite difficult since $f$ is not necessarily bounded. Indeed, we consider two cases, $ 1<p<\infty $ and $p=1$. For $ 1<p<\infty $, in Lemma \ref{Lemma Relativ Comp p not 1}, we prove that equation \eqref{Eq1SubStep} has a mild solution on $\R ^+ $ with relatively compact range provided it has at least a bounded mild solution on $\R ^+ $. In general, this result does not hold for $p=1$ without any additional assumptions on  $f$, see Lemma \ref{Propostion12}. Furthermore, when $f$ satisfies hypothesis \textbf{(H5)} below, the result also holds in general, see Proposition \ref{Proposition H5}. After proving that, we give our main results, we show that,  if equation \eqref{Eq1SubStep} has at least a  bounded mild solution $x_{0}$ on the right half line, then it has a unique compact almost automorphic solution provided the existence of a unique mild solution of equation \eqref{Eq1SubStep} which minimizes a given subvariant functional. Here, we differentiate two cases, $ 1<p<\infty $, see Theorem \ref{Main Theorem 1} and the case $p=1$, see Theorem \ref{Main Theorem 2}. 

Furthermore, to illustrate our theoretical results, we study a class of reaction diffusion problems of the form: 
\begin{equation}
\left\{
\begin{array}
[c]{l}%
\dfrac{\partial}{\partial t}v(t,\xi)=\displaystyle \sum_{k=1}^{n}\dfrac{\partial^{2}}{\partial \xi_{i}^{2}%
}v(t,\xi)+g(v(t,\xi))+h\left(  t,\xi\right)  \text{ for }t\in\mathbb{R}\text{ and
}\xi\in\Omega ,\\
\text{ \ \ \ \ \ \ \ \ \ \ \ \ \ \ \ \ \ \ \ \ \ }\\
v(t,\xi)=0\; \text{ for }t\in\mathbb{R}, \ \xi\in \partial \Omega ,
\end{array}
\right.  \label{AppPap3Intr}
\end{equation}
in a bounded open  domain $ \Omega \subset \r^{n}, \ n\geq 1$ with smooth boundary $ \partial \Omega $ and $h:\mathbb{R}\times\Omega \rightarrow\mathbb{R}$ is not necessarily bounded. To satisfy our assumptions, we introduce the main fact that, equation \eqref{AppPap3Intr} has a unique bounded global solution on $\R ^+ $,  see Theorem \ref{Proposition Application}. Our result is of independent interest and it generalizes many works in the literature, see \cite{Haraux,Roth}. Therefore, we give our main Theorem of the existence of a unique compact almost automorphic solution to equation \eqref{AppPap3Intr},  see Theorem \ref{Theorem Application main}.\\

The work is organized as follows: Section \ref{section2Pep3} is devoted to preliminaries for Stepanov almost automorphic functions and the subvariant minimizing functional. In Section \ref{section3}, we give our main results of this work. We begin by the characterization of uniformly Stepanov almost automorphic functions. After that, we prove under the weak assumption of Stepanov almost automorphy on $f$ that every mild solution of \eqref{Eq1SubStep} with relatively compact range is uniformly continuous. In Section \ref{section31Pep3} we give sufficient conditions to the existence of solutions with relatively compact ranges to equation \eqref{Eq1SubStep} on the right half line. Section \ref{section32Pep3} is devoted to main Theorems \ref{Main Theorem 1} and \ref{Main Theorem 2}, we prove the existence of compact almost automorphic solutions through a minimizing some subvariant functional provided the existence of a bounded mild solution on the right half line. Finally, Section \ref{section4} concerns the application \eqref{AppPap3Intr}.

\section{Preliminaries}\label{section2Pep3}
Throughout this work, $(X,\|\cdot\|) $ is a Banach space. $BC(\mathbb{R},X)$ equipped with the supremum norm, the Banach space of bounded continuous functions $f$ from $\mathbb{R}$ into $ X$. For $ 1\leq p <\infty $, $ q $ denotes its conjugate exponent defined by $ \dfrac{1}{p} +\dfrac{1}{q}=1$ if $ p\neq  1$ and $ q=\infty $ if $ p=1 $. By  $ L^{p}_{loc}(\mathbb{R},X)$ (resp. $ L^{p}(\mathbb{R},X)$), we designate the space (resp. the Banach space) of all equivalence classes of measurable functions $f$ from $\mathbb{R}$ into $ X$  such that $\|f(\cdot)\|^{p}$ is locally integrable (resp. integrable).

% Let $K$ be a compact subset of $X$. By $\mathcal{F}_{K}$, we denote the set of mild solutions $x$ on $\mathbb{R}$ of equation \eqref{Eq1SubStep} with relatively compact range.\\

\subsection{Almost automorphic functions}\label{section21Pep3}
%\subsection{$\mu-$Pseudo almost periodic and $\mu-$Pseudo almost Automorphic functions}
%In this section, we recall some properties of almost automorphic functions in the classical sense and in Stepanov sense.
\begin{definition}[H. Bohr]\cite{H.Bohr}
A continuous function $ f:\mathbb{R}\longrightarrow X $ is said to be almost periodic if for every $\varepsilon > 0,$ there exists $ l_{\varepsilon}  > 0 $, such that for every $ a\in \mathbb{R} $, there exists $ \tau \in \left[a,a+ l_{\varepsilon} \right]  $ satisfying: $$ \| f(t+\tau)-f(t) \| < \varepsilon \quad \mbox{for all}\; t\in \mathbb{R} .$$ The space of all such functions is denoted by $ AP(\mathbb{R},X) .$
\end{definition}
\begin{definition}[S. Bochner]\cite{Boch2}\label{DefAA}
A continuous function $ f:\mathbb{R}\longrightarrow X $ is called almost automorphic if for every sequence $(\sigma_{n})_{n \geq 0}$ of real numbers, there exist a subsequence $(s_{n})_{n \geq 0}\subset (\sigma_{n})_{n \geq 0} $ and $g:\mathbb{R}\longrightarrow X $ such that,  for each $t\in \r$
\begin{eqnarray*}
g(t)=:\lim_{n} f(t+s_{n})\quad \mbox{and}\quad f(t)=\lim_{n} g(t-s_{n}) .\label{DefAlmAuom}
\end{eqnarray*} 
%The space of all such functions is denoted by $ AA(\mathbb{R},X) .$
%\end{definition}
%Then, we have the following inclusions:
%\begin{eqnarray}
%AP(\mathbb{R},X) \subset AA(\mathbb{R},X) \subset BC(\mathbb{R},X).
%\end{eqnarray}
%\begin{definition}
%A continuous function $ F:\mathbb{R}\times \mathbb{R} \longrightarrow X $ is said to be bi-almost automorphic if for every sequence $(\sigma_{n})_{n \geq 0}$ of real numbers, there exist a subsequence $(s_{n})_{n \geq 0}\subset (\sigma_{n})_{n \geq 0} $ and a measurable function $G:\mathbb{R}\times \mathbb{R} \longrightarrow X $, such that 
%\begin{eqnarray*}
%G(t,s)=:\lim_{n} F(t+s_{n},s+s_{n})\quad \mbox{and}\quad F(t,s)=\lim_{n} G(t-s_{n},s-s_{n}) \quad \mbox{for all}\; t,s\in \mathbb{R} . \label{DefAlmAuom3}
%\end{eqnarray*} 
%The space of all such functions is denoted by $ bAA(\mathbb{R},X) .$
%\end{definition}
If the above limits hold uniformly on compact subsets
of $\r$, then $f$ is called compact almost automorphic.
\end{definition}
\noindent In the sequel, $AA(\mathbb{R},X)$ (resp. $KAA(\mathbb{R},X)$)
denotes the space of almost automorphic (resp. compact almost automorphic)
$X$-valued functions.

%\begin{proposition}\cite{N'Gue}
%Let $ f_{1},f_{2},f \in AA(\mathbb{R},X)  $ and $ \lambda \in \mathbb{R} .$  Then, the following are true:
%\begin{itemize}
%\item[\textbf{(i)}] $ \lambda f_{1}+f_{2} \in AA(\mathbb{R},X). $
%\item[\textbf{(ii)}]  The set $ \lbrace f(t):\, t\in \mathbb{R} \rbrace $ is relatively compact in $X$.
%\item[\textbf{(iii)}] The space $ AA(\mathbb{R},X)  $ is translation invariant, i.e., for all $\tau \in \mathbb{R}$,  $ f\in AA(\mathbb{R},X) $ implies $ f(\cdot +\tau)\in AA(\mathbb{R},X) .$ 
%\item[\textbf{(iv)}]The space $ AA(\mathbb{R},X)  $ equipped with the supnorm is a Banach space. 
%\item[\textbf{(v)}] $f$ is bounded i.e., $ \displaystyle \sup_{t\in \mathbb{R}} \|f(t)\| <\infty .$
%\end{itemize}
%\end{proposition}
%The link between the notion of compact almost automorphy and the classical almost automorphy is next.
\begin{proposition}\cite{Ess2}\label{EssebbarCharac}
Let $f \in AA(\mathbb{R},X)  $. Then, $f$ is compact almost automorphic if and only if it is uniformly continuous.
\end{proposition}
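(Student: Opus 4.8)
Being an equivalence, the statement splits into two implications, and the real content is the interplay between uniform continuity of $f$ and the passage from pointwise to locally uniform convergence of translates.

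\emph{Compact almost automorphy implies uniform continuity.} I would argue by contradiction. If $f$ is not uniformly continuous, there are $\varepsilon_{0}>0$ and sequences $(a_{n}),(b_{n})$ in $\mathbb{R}$ with $h_{n}:=a_{n}-b_{n}\to 0$ and $\|f(a_{n})-f(b_{n})\|\geq\varepsilon_{0}$ for all $n$. Applying the definition of compact almost automorphy to the sequence $(b_{n})$, I extract a subsequence, still denoted $(b_{n})$ (so $(h_{n})$ still tends to $0$), together with a limit $g$ such that $f(\cdot+b_{n})\to g$ uniformly on compact subsets of $\mathbb{R}$. Then: $g$ is continuous, being a locally uniform limit of the continuous functions $f(\cdot+b_{n})$; the set $K:=\{0\}\cup\{h_{n}:n\geq 1\}$ is compact, hence $\sup_{t\in K}\|f(t+b_{n})-g(t)\|\to 0$; evaluating this at $t=h_{n}$ and at $t=0$ gives $\|f(a_{n})-g(h_{n})\|\to 0$ and $\|f(b_{n})-g(0)\|\to 0$. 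Since $g$ is continuous at $0$ we also have $g(h_{n})\to g(0)$, and the triangle inequality then forces $\|f(a_{n})-f(b_{n})\|\to 0$, contradicting the choice of the sequences.

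\emph{Almost automorphy together with uniform continuity implies compact almost automorphy.} Given $(\sigma_{n})$, almost automorphy provides a subsequence $(s_{n})$ and $g$ with $f(t+s_{n})\to g(t)$ and $g(t-s_{n})\to f(t)$ for every $t$. Letting $\omega$ be the modulus of continuity of $f$ (so $\|f(s)-f(t)\|\leq\omega(|s-t|)$ with $\omega(\delta)\to 0$ as $\delta\to 0^{+}$) and passing to the limit in $\|f(t+s_{n})-f(t'+s_{n})\|\leq\omega(|t-t'|)$, I get that $g$ satisfies the same modulus, hence is uniformly continuous. A standard finite-net argument now turns the two pointwise limits into locally uniform ones: for a compact $K$ and $\varepsilon>0$, choose $\delta$ with $\omega(\delta)<\varepsilon/3$, cover $K$ by finitely many $\delta$-balls centred at $t_{1},\dots,t_{m}$, pick $N$ with $\|f(t_{j}+s_{n})-g(t_{j})\|<\varepsilon/3$ for $n\geq N$ and all $j$, and bound $\|f(t+s_{n})-g(t)\|$ through the nearest $t_{j}$ using $\omega$; interchanging the roles of $f$ and $g$ (both uniformly continuous) handles $g(\cdot-s_{n})\to f$. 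Hence both limits are uniform on compact sets and $f\in KAA(\mathbb{R},X)$.

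The routine part is the second implication, which reduces to the observation that the modulus of continuity of $f$ transfers verbatim to $g$. The main obstacle is the first implication: one must check that the compact set used to invoke locally uniform convergence is legitimately fixed only after the subsequence has been chosen, and, crucially, establish continuity of the limit function $g$ — precisely the feature that distinguishes compact almost automorphy from plain almost automorphy.
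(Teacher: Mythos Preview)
Your proof is correct. Note, however, that the paper does not actually supply a proof of this proposition: it is quoted verbatim from \cite{Ess2} and used as a black box, so there is no in-paper argument to compare against.

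That said, your argument is the standard one and matches what one finds in the cited source. The second implication is indeed routine once you observe that the modulus of continuity passes from $f$ to $g$ under pointwise limits, after which equicontinuity of the family $\{f(\cdot+s_{n})\}$ together with pointwise convergence yields locally uniform convergence via a finite $\delta$-net. For the first implication, your contradiction argument is clean; the only point worth stressing (which you already flagged) is the order of quantifiers: the subsequence of $(b_{n})$ is extracted first from the definition of $KAA$, and only then is the compact set $K=\{0\}\cup\{h_{n}\}$ formed from the \emph{corresponding} subsequence of $(h_{n})$, so that the uniform convergence on $K$ is legitimate. Continuity of $g$ follows immediately from local uniform convergence, and the triangle inequality closes the argument.
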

%\begin{remark}
%$ $\\
%\textbf{(i)} The function $ g $ in Definition \ref{DefAA} is measurable not necessarily continuous. Using the characterization of S. Bochner of almost periodicity one can claim easily that, if the convergence in Definition \ref{DefAA} is uniform in $ t \in \mathbb{R} $, then necessarily $ f$ belongs to $AP(\mathbb{R},X)  $.\\ 
%\textbf{(ii)} An almost automorphic function may not be uniformly continuous. In fact, the real function $ f(t)= \sin\left( \dfrac{1}{2+\cos(t)+\cos(\sqrt{2}t)}\right) $ for $ t\in \mathbb{R} $, belong to $  AA(\mathbb{R},\mathbb{R}) $, but is not uniformly continuous. Hence, $ f $ does not belongs to  $AP(\mathbb{R},\mathbb{R}) $.\\
%%\textbf{(c)} The space $ AP(\mathbb{R},X)  $ equipped with the supnorm is a Banach space.
%\end{remark}
\begin{remark}
\textbf{(i)} By the pointwise convergence, the function $g$ in Definition
\ref{DefAA} is only measurable and bounded but not necessarily
continuous. If one of the two convergences in Definition \ref{DefAA}
is uniform on $\r$, then $f$ is almost periodic. For more details
about this topic we refer the reader to the book \cite{N'Gue}.\\
\textbf{(ii)} An almost automorphic function may not be uniformly continuous. In fact, the real function $$ f(t)= \sin\left( \dfrac{1}{2+\cos(t)+\cos(\sqrt{2}t)}\right)\; \text{for } \;  t\in \mathbb{R}, $$ belong to $  AA(\mathbb{R},\mathbb{R}) $, but is not uniformly continuous.
% Hence, $ f $ does not belongs to  $AP(\mathbb{R},\mathbb{R}) $.\\
\end{remark}
Then, we have the following inclusions:
\begin{eqnarray}
AP(\mathbb{R},X) \subset KAA(\mathbb{R},X) \subset AA(\mathbb{R},X) \subset BC(\mathbb{R},X).
\end{eqnarray}
\definition\textbf{\cite{EssEzz1}} Let $ 1\leq p< \infty $.  A function $ f\in L^{p}_{loc} (\mathbb{R},X) $ is said to be bounded in the sense of Stepanov if $$ \displaystyle \sup_{t\in \mathbb{R}}  \left( \int_{\left[ t,t+1\right] } \|f(s)\|^{p}ds\right) ^{\frac{1}{p}}=\displaystyle \sup_{t\in \mathbb{R}}  \left( \int_{\left[ 0,1\right] } \|f(t+s)\|^{p}ds\right) ^{\frac{1}{p}} <\infty.$$ 
The space of all such functions is denoted by $ BS^{p} (\mathbb{R},X)$ and is provided with the following norm:
\begin{eqnarray*} 
   \|f\|_{BS^{p} }&:=& \displaystyle \sup_{t\in \mathbb{R}}  \left( \int_{\left[ t,t+1\right] } \|f(s)\|^{p}ds\right) ^{\frac{1}{p}} \\ & =& \sup_{t\in \mathbb{R}}\|f(t+\cdot)\|_{L^{p} (\left[ 0,1\right],X)} .
\end{eqnarray*}   \\ 
Then, the following inclusions hold:
\begin{eqnarray}
BC(\mathbb{R},X) \subset BS^p (\mathbb{R},X) \subset L^{p}_{loc} (\mathbb{R},X) . \label{incls}
\end{eqnarray} 
\begin{definition}[Bochner transform]\cite{Ess}
Let $f\in L^{p}_{loc} (\mathbb{R},X) $ for $ 1\leq p< \infty $. The Bochner transform of $f$ is the function $f^b :\mathbb{R}\longrightarrow L^{p} (\left[ 0,1\right],X) $ defined for all $t\in \mathbb{R} $ by $$ (f^b (t))(s)= f(t+s) \quad \mbox{for} \; s\in \left[ 0,1\right] .$$
\end{definition}

%\begin{remark}\cite{Moi}
%Let $f, g\in L^{p}_{loc} (\mathbb{R},X) $ for $ 1\leq p< \infty $. Then, the following hold:
%\begin{itemize}
%\item[\textbf{(i)}]  $(f+g)^{b}=f^{b}+g^{b}$. 
%\item[\textbf{(ii)}] for all $ \lambda \in\mathbb{R},  $ \ $(\lambda f)^{b}=\lambda f^{b}$.
%\item[\textbf{(iii)}] for all $ \tau \in \mathbb{R}, $ \  $( T_{\tau}f   )^{b}= T_{\tau}f^{b} , $ where $ T_{\tau} $ is the translation map.
%\end{itemize}
%\end{remark}
 Now, we give the definition of almost automorphy in the sense of Stepanov.
\begin{definition}\label{DefAlmAut1}\cite{EssEzz1} Let $ 1\leq p< \infty $. A function $ f \in L^{p}_{loc} (\mathbb{R},X) $ is said to be almost automorphic in the sense of Stepanov (or $S^p$-almost automorphic), if for every sequence $(\sigma_{n})_{n \geq 0}$ of real numbers, there exists a subsequence $(s_{n})_{n \geq 0}\subset (\sigma_{n})_{n \geq 0} $ and $g\in L^{p}_{loc} (\mathbb{R},X) $, such that, for each $ t\in \mathbb{R} $
\begin{eqnarray*}
 \lim_{n} \left( \int_{t}^{t+1}\| f(s+s_{n})-g(s)\|^{p}ds\right)^{\frac{1}{p}} =0\quad \mbox{and}\quad \lim_{n} \left( \int_{t}^{t+1}\| g(s-s_{n})-f(s)\|^{p}ds\right)^{\frac{1}{p}}=0.\label{DefAlmAuom5Pap3}
\end{eqnarray*}
 The space of all such functions is denoted by $ AAS^{p}(\mathbb{R},X) .$ 
\end{definition}
%\begin{definition}
%Let $ 1\leq p< \infty $. A function $ F \in L^{p}_{loc} (\mathbb{R}\times \mathbb{R},X) $ is said to be bi-almost automorphic in the sense of Stepanov if for every sequence $(\sigma_{n})_{n \geq 0}$ of real numbers, there exist a subsequence $(s_{n})_{n \geq 0}\subset (\sigma_{n})_{n \geq 0} $ and a measurable function $G\in L^{p}_{loc} (\mathbb{R}\times \mathbb{R},X)$, such that 
%\begin{eqnarray*}
%\lim_{n} \left( \int_{t}^{t+1}\| G(\tau,s)- F(\tau+s_{n},s+s_{n})\|^{p}d\tau\right)^{\frac{1}{p}}=0\; \mbox{and}\; \lim_{n} \left( \int_{t}^{t+1}\| F(\tau,s)- G(\tau-s_{n},s-s_{n}) d\tau\right)^{\frac{1}{p}}=0 \label{DefAlmAuom7}
%\end{eqnarray*}
%\end{definition}%
\begin{theorem}\cite{EssEzz1}
The following are equivalent:
\begin{itemize}
\item[\textbf{(i)}] $ f $ is $S^p$-almost automorphic.
\item[\textbf{(ii)}] For every sequence $(\sigma_{n})_{n \geq 0}$ of real numbers, there exists a subsequence $(s_{n})_{n \geq 0}\subset (\sigma_{n})_{n \geq 0} $ for each $ t\in \mathbb{R} $

\begin{eqnarray*}
\lim_{n,m} \left( \int_{t}^{t+1}\| f(\tau+s_{n}-s_{m}) -f(\tau)\|^{p}d\tau\right)^{\frac{1}{p}}=0.\label{DefAlmAuom8Pap3}
\end{eqnarray*}
\end{itemize}
%The space of all such functions is denoted by $ bAA(\mathbb{R},X) .$
\end{theorem}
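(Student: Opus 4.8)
The plan is to transport everything to the Banach space $Y:=L^{p}([0,1],X)$ through the Bochner transform, where the asserted equivalence becomes the classical double--sequence (Bochner) criterion for almost automorphy. The starting point is that, for every $g\in L^{p}_{loc}(\mathbb{R},X)$ and every $h\in\mathbb{R}$, the substitution $s=t+\sigma$, $\sigma\in[0,1]$, yields
\[
\|f^{b}(t+h)-g^{b}(t)\|_{Y}^{p}=\int_{t}^{t+1}\|f(s+h)-g(s)\|^{p}\,ds,\qquad
\|f^{b}(t+h)-f^{b}(t)\|_{Y}^{p}=\int_{t}^{t+1}\|f(\tau+h)-f(\tau)\|^{p}\,d\tau .
\]
Reading Definition \ref{DefAlmAut1} through the first identity with $h=s_{n}$ (together with its twin, obtained by exchanging $f$ and $g$ and replacing $s_{n}$ by $-s_{n}$), one sees that $f\in AAS^{p}(\mathbb{R},X)$ if and only if $f^{b}\in AA(\mathbb{R},Y)$ in the ordinary Bochner sense. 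Reading condition \textbf{(ii)} through the second identity with $h=s_{n}-s_{m}$, one sees that \textbf{(ii)} says exactly that for every sequence $(\sigma_{n})$ there is a subsequence $(s_{n})\subset(\sigma_{n})$ with $\lim_{n,m}\|f^{b}(t+s_{n}-s_{m})-f^{b}(t)\|_{Y}=0$ for each $t$, the iterated limit being taken by sending $n\to\infty$ first. So the theorem is nothing but Bochner's double--sequence criterion applied to the $Y$--valued function $f^{b}$, which may be quoted from \cite{N'Gue,EssEzz1}; for completeness I sketch the two implications.

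\emph{(i)$\Rightarrow$(ii).} Given $(\sigma_{n})$, use $f^{b}\in AA(\mathbb{R},Y)$ to extract a subsequence $(s_{n})$ and a function $k:\mathbb{R}\to Y$ with $f^{b}(t+s_{n})\to k(t)$ and $k(t-s_{n})\to f^{b}(t)$ in $Y$ for each $t$. For a fixed $t$, the triangle inequality gives
\[
\|f^{b}(t+s_{n}-s_{m})-f^{b}(t)\|_{Y}\le\|f^{b}((t-s_{m})+s_{n})-k(t-s_{m})\|_{Y}+\|k(t-s_{m})-f^{b}(t)\|_{Y};
\]
for each fixed $m$ the first term tends to $0$ as $n\to\infty$ (forward convergence at the point $t-s_{m}$), and afterwards the second term tends to $0$ as $m\to\infty$ (backward convergence at $t$). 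Hence $\lim_{n,m}\|f^{b}(t+s_{n}-s_{m})-f^{b}(t)\|_{Y}=0$, which by the identities above is \textbf{(ii)}.

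\emph{(ii)$\Rightarrow$(i).} This is the substantive direction, and the only place where genuine work is required. I would reproduce the classical proof of Bochner's criterion in a Banach space: starting from a sequence $(\sigma_{n})$, use \textbf{(ii)} together with a diagonal extraction over a countable dense set of base points to produce a subsequence (still denoted $(s_{n})$) along which $f^{b}(t+s_{n})$ converges for every $t$ to some $k(t)$ --- a priori only measurable, possibly discontinuous --- and then check, using \textbf{(ii)} once more, that $k(t-s_{n})\to f^{b}(t)$ for every $t$; this is precisely almost automorphy of $f^{b}$, i.e. $S^{p}$--almost automorphy of $f$. The hard point is that the convergence of the translates $f^{b}(\cdot+s_{n})$ is intrinsically non--uniform in the base point --- were it uniform, $f$ would be $S^{p}$--almost periodic --- so the computation in the previous paragraph cannot simply be run backwards: the point $t-s_{m}$ drifts with $m$ and \textbf{(ii)} only provides iterated--limit information, so the extraction must be arranged so as to determine the forward limit $k$ and the backward convergence at one stroke. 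Nothing occurs beyond the scalar and Banach--valued situation, so once the Bochner--transform reduction above is set up, the remainder is careful bookkeeping rather than new mathematics.
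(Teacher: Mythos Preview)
The paper does not actually prove this theorem: it is quoted verbatim from \cite{EssEzz1} with no proof supplied, so there is no in-paper argument to compare against. Your reduction via the Bochner transform is exactly the mechanism the paper itself records (Remark~\ref{RemCompSpAp}\textbf{(iii)}: $f\in AAS^{p}$ iff $f^{b}\in AA(\mathbb{R},L^{p}([0,1],X))$), and your treatment of \textbf{(i)}$\Rightarrow$\textbf{(ii)} is clean and correct.

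The direction \textbf{(ii)}$\Rightarrow$\textbf{(i)} as you sketch it, however, contains a genuine gap. You propose a diagonal extraction over a countable dense set of base points to force $f^{b}(t+s_{n})$ to converge for every $t$. This would require equicontinuity of the translates $\{f^{b}(\cdot+s_{n})\}_{n}$ (equivalently, uniform continuity of $f^{b}$) to pass from a dense set to all of $\mathbb{R}$, since the pointwise limit $k$ is in general discontinuous. But almost automorphic functions need not be uniformly continuous---the paper itself stresses this point---so the dense-set argument does not close.

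Fortunately no extraction is needed at all. Under the iterated-limit reading you adopt, the very statement ``$\lim_{m}\lim_{n}\|f^{b}(t+s_{n}-s_{m})-f^{b}(t)\|_{Y}=0$ for each $t$'' presupposes that the inner limit $\lim_{n}f^{b}(t+s_{n}-s_{m})$ exists for every $t$ and every $m$. Fixing any $m_{0}$ and substituting $t\mapsto t+s_{m_{0}}$ shows immediately that $k(t):=\lim_{n}f^{b}(t+s_{n})$ exists for \emph{all} $t$; the outer limit then reads $\lim_{m}k(t-s_{m})=f^{b}(t)$, which is precisely almost automorphy of $f^{b}$. Replace your diagonal sketch by this two-line observation and the proof is complete.
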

\begin{remark}\cite{Moi}\label{RemCompSpAp}
$ $\\
\textbf{(i)} Every almost automorphic function is $S^p$-almost automorphic for $ 1\leq p < \infty $.\\
\textbf{(ii)} For all $ 1\leq p_1 \leq p_2 < \infty $, if $f$ is $S^{p_2}$-almost automorphic, then $f$ is $S^{p_1}$-almost automorphic. \\
\textbf{(iii)} The Bochner transform of an $X $-valued function is a $L^{p} (\left[ 0,1\right],X) $-valued function. Moreover, a function $f $ is $S^p$-almost automorphic if and only if $f^{b}$ is (Bochner) almost automorphic. \\
\textbf{(iv)} A function $ \varphi(t,s)$ for $t\in \mathbb{R},s\in [0,1]$ is the Bochner transform of a function $f$ (i.e., $ \exists$ $f:\mathbb{R}\longrightarrow X$ such that $(f^b (t))(s)=\varphi(t,s),\; t\in \mathbb{R}, s\in \left[ 0,1\right]$) if and only if $ \varphi(t+\tau,s-\tau)=\varphi(t,s)$ for all $t\in \mathbb{R}, s\in \left[ 0,1\right]$ and $\tau \in [s-1,s].$ 
%\end{itemize}
\end{remark} 
\begin{definition}
Let $ 1\leq p< \infty $. A  function $ f: \mathbb{R}\times X\longrightarrow Y$ such that $f(\cdot,x)\in L^{p}_{loc} (\mathbb{R},Y) $  for each $ x\in X $ is said to be almost automorphic in $ t $ uniformly with respect to $ x $ in $X$ if for each compact set $K$ in $X$, for every sequence $(\sigma_{n})_{n \geq 0}$ of real numbers, there exists a subsequence $(s_{n})_{n \geq 0}\subset (\sigma_{n})_{n \geq 0} $ and $g(\cdot, x)\in L^{p}_{loc} (\mathbb{R},Y) $ for each $ x\in X $, such that 
\begin{eqnarray}
 \lim_{n} \sup_{x\in K}\left( \int_{t}^{t+1}\| f(s+s_{n},x)-g(s,x)\|^{p}ds\right)^{\frac{1}{p}} =0,  \lim_{n} \displaystyle \sup_{x\in K} \left( \int_{t}^{t+1}\| g(s-s_{n},x)-f(s,x)\|^{p}ds\right)^{\frac{1}{p}}=0\nonumber \\
 \label{DefAlmAuomUSubv}
\end{eqnarray}are well-defined for each $ t\in \mathbb{R}  .$\\ 
The space of all such functions is denoted by $ AAS^{p}U(\mathbb{R}\times X,Y) .$ 
\end{definition}
%\begin{proposition}
%\cite{drisi2018compact}\label{lem: AA almost everywhere} Let $f\in SAAU(\r\times X,X)$.
%Then for any bounded subset $B$ of $X$ and for all sequence of real
%numbers $\left(t_{n}\right)_{n}$, there exist a function $g:\r\times X\to X$
%with $\sup_{x\in B}\norm{g(.,x)}$ locally integrable, a subsequence
%$\left(t'_{n}\right)_{n}\subset\left(t_{n}\right)_{n}$ and a subset
%$N\subset\r$ with null Lebesgue measure such that for each $t\in\r\setminus N$
%\begin{align*}
%\sup_{x\in B}\norm{f(t+t'_{n},x)-g(t,x)} & \to0\\
%\sup_{x\in B}\norm{g(t-t'_{n},x)-f(t,x)} & \to0
%\end{align*}
%as $n\to\infty$.
%\end{proposition}

\subsection{ A subvariant minimizing functional}\label{section22Pep3}
The results in this section was introduced by \cite{CieuEzz} as a generalisation of the works by Fink, see \cite{Fink}.\\

Let $K$ be a compact subset of $X$. By $\mathcal{F}_{K}$, we denote the set of mild solutions $x$ on $\mathbb{R}$ of equation \eqref{Eq1SubStep} with relatively compact range and by $C_{K}(\mathbb{R},X)$ the set
$$
 C_{K}(\mathbb{R},X)=\{x\in C(\mathbb{R},X)\ :  \quad x  (t)\in K\;  \text{ for all }t\in
\mathbb{R} \}.
$$
Note that $ \mathcal{F}_{K} \subset C_{K}(\mathbb{R},X) $.
\begin{definition}\label{DefinitionSubvarFunct}
A functional $\lambda_{K}:C_{K}(\mathbb{R},X)\rightarrow\mathbb{R}$ is called a
subvariant functional associated to the compact set $K$, if
$\lambda_{K}$ satisfies the following conditions:\\
\textbf{(i)} $\lambda_{K}$ is invariant by translation: $\lambda_{K}(x_{\tau})=\lambda
_{K}(x)$ for each $\tau\in\mathbb{R}$, where $x_{\tau}(.)=x(\tau+.)$. \\
\textbf{(ii)} $\lambda_{K}$ is lower semicontinuous for the topology of
compact convergence: if $\displaystyle  \lim_{n\rightarrow+\infty}x_{n}=y$
uniformly on each compact subset of $\mathbb{R}$, then $\displaystyle
\lambda_{K}(y)\leq\liminf_{n\rightarrow+\infty}\lambda_{K}(x_{n})$. 
\end{definition}
%4mm \noindent\textbf{Remark.} In the above definition of subvariant
%functional, note that conditions i) and ii) imply the following: \vskip  2mm
%\textbf{iii)} if $\displaystyle  \lim_{n\rightarrow+\infty}x_{{\tau}_{n}}=y$
%uniformly on each compact subset of $\mathbb{R}$, then $\displaystyle
%\lambda_{K}(y)\leq\lambda_{K}(x)$. \vskip  4mm
\begin{definition}
 A function $x_{\ast}: \mathbb{R}\longrightarrow X$ is
called a minimal $K$-valued solution of equation \eqref{Eq1SubStep} if the following hold
\begin{equation}
x_{\ast}\in\mathcal{F}_{K}\quad\mathrm{and}\quad\lambda_{K}(x_{\ast})=\inf_{x\in\mathcal{F}_{K}}\lambda_{K}(x) \label{}
\end{equation}
\end{definition}
An example of subvariant functional is given by: 
\[
\lambda_{K}(x)=\sup_{t\in\mathbb{R}}\Phi(x(t))\text{\ \ \ where\ \ }\Phi\in
C(K,\mathbb{R}).
\]
In particular, we have
\[
\lambda_{K}(x)=\sup_{t\in\mathbb{R}}\parallel x(t)\parallel .
\]
%%%%%%%%%%%%%%%%%%%%%%%%%%%%%%%%%%%%%%%%%%%%%%%%%%%%%%%%%%%%%%%%%%%%%%%%%%%%%%%
%
%%%%%%%%%%%%%%%%%%%%%%%%%%%%%%%%%%%%%%%%%%%%%%%%%%%%%%%%%%%%%%%%%%%%%%%%%%%%%%%
%\definition The Bochner transform 
\section{Almost automorphic solutions of equation \eqref{Eq1SubStep}}\label{section3}
In this section, we prove the existence of compact almost automorphic solutions to the semilinear evolution equation \eqref{Eq1SubStep}. \\

By a mild solution of equation \eqref{Eq1SubStep}, we mean a continuous function $ x: \mathbb{R} \longrightarrow X $ satisfying the following variation of constants formula:
\begin{eqnarray}
x(t)=T(t-\sigma)x(\sigma)+ \int_{\sigma}^{t} T(t-s)f(s,x(s))ds \qquad \mbox{for all} \; t\geq \sigma . \label{defMildSol}
\end{eqnarray}
%\end{definition}
%\begin{eqnarray}
%  x'(t)= Ax(t) + f(t, x(t)) \;\; \text{for}\; t \in \mathbb{R}, \label{Eq1SubStep}
%\end{eqnarray}
%where the function $ f:\mathbb{R}\times X \longrightarrow X $ is locally integrable in $t$ for each $ x\in X $.\\
Now, we give the following hypotheses:\\
\textbf{(H1)} $ (A,D(A)) $ is the infinitesimal generator of a $ C_{0} $-semigroup $ (T(t))_{t\geq 0} $ on $ X $.\\
\textbf{(H2)}  $ (T(t))_{t\geq 0} $ is compact, i.e.,  $  T(t)  $ is a compact operator for each $ t>0 $.\\
 \textbf{(H3)}  For all $R>0$, the function $\displaystyle \sup_{\parallel x\parallel\leq R} \| f(\cdot,x)\| \in L^{p}_{loc}(\r , \r) $ such that   $$\displaystyle               \sup_{t\in \r} \left( \int_{t}^{t+1}\sup_{\parallel x\parallel\leq R} \parallel f(s,x)\parallel^{p} ds\right)^{\frac{1}{p}} <+\infty .$$\\
  \textbf{(H4)} The function $ f $ is given by $f(t,x):= \varphi(t)g(x)+\psi(t)$, where $\varphi \in AAS^{1}(\r,\r) \cap L^{\infty}(\r, \r)$, $\psi \in AAS^{1}(\r,X)  $ and $ g \in C(X,X) $ transform bounded subsets of $X$ into bounded. \\
 
 \begin{remark}\label{Remark31}
 Hypothesis \textbf{(H4)} implies  \textbf{(H3)}.
 \end{remark} 
% An example of functions which satisfy hypothesis \textbf{(H3)} is the next.\\
% Let $ p\in [1,\infty) $. Define the function $ f(t,x)=a(t)\Phi(x)+b(t) $ for $ (t,x)\in \mathbb{R}\times X $,  where $ a,b \in BS^{p}(\r , X)  $ and $ \Phi  $ is locally Lipschitzian, i.e., for all $ R>0 $ there exists $ L_{R}\geq 0 $ such that 
%\begin{eqnarray}
%  \| \Phi(x)-\Phi(y) \| \leq L_{R} \|x-y\|  \quad  \text{for all } x,y \in B(0,R) . \label{LocalLipschitzDef3}
% \end{eqnarray}
%Let $ R>0 $ and let $ x \in \overline{B}(0,R) $. Using \eqref{LocalLipschitzDef3}, we obtain
%\begin{eqnarray}
% \| \Phi(x) \| \leq L_{R} R +\|\Phi(0)\| :=M(R) . \label{BoundH3Lipsch}
%\end{eqnarray}
%It is clear that, for each $ x\in X $, the function $ \displaystyle \sup_{\parallel x\parallel\leq R} \| f(\cdot,x)\| $ is measurable. Moreover, by  \eqref{BoundH3Lipsch}, we claim that $$\displaystyle \sup_{\parallel x\parallel\leq R} \| f(\cdot,x)\| \in L^{p}_{loc}(\r , \r) .$$
%Hence, $$\displaystyle               \sup_{t\in \r}\left( \int_{t}^{t+1}\sup_{\parallel x\parallel\leq R}  \parallel f(s,x)\parallel^{p} ds\right)^{\frac{1}{p}} \leq M(R)\|a\|_{BS^{p}}+\|b\|_{BS^{p}} <+\infty .$$
\begin{definition}\label{DefSpUnifContPaper3}
Let $ p \in [1, \infty)$ and $ f:\mathbb{R}\times X\longrightarrow X $ be such that $ f(\cdot, x) \in BS^{p}(\mathbb{R}, X) $ for each $ x\in X$. 
Then, $ f$ is said to be $ S^{p} $-uniformly continuous with respect to the second argument on each compact $K\subset X$ if, for all $K\subset X$ compact for all $ \varepsilon>0 $ there exists $ \delta_{K, \varepsilon} $ such that for all $ x_{1}, x_{2} \in K $, we have 
\begin{eqnarray}
\| x_{1}-x_{2}\| \leq \delta_{K, \varepsilon} \Longrightarrow \left( \int_{t}^{t+1}\|f(s,x_{1})-f(s,x_{2}) \|^{p}ds\right) ^{\frac{1}{p}} \leq \varepsilon \quad \text{for all} \; t\in \mathbb{R}  . \label{ComCharSpPaper3}
\end{eqnarray}
\end{definition}
\begin{lemma}\label{LemApSpCom1Paper3}
Let $ p \in [1, \infty)$ and $ f:\mathbb{R}\times X\longrightarrow X $ be such that $ f(\cdot, x) \in L^{p}_{loc}(\mathbb{R}, X) $ for each $ x\in X$. Then, $ f\in AAS^{p}U(\mathbb{R}\times X,X) $ if and only if the following hold:\\
\textbf{(i)} For each $ x\in X $, $ f(\cdot, x) \in AAS^{p}(\mathbb{R},X) .$\\
\textbf{(ii)} $ f$ is $ S^{p} $-uniformly continuous with respect to the second argument on each compact $K\subset X$. 
%\end{itemize}
\end{lemma}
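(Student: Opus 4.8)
The plan is to prove the two implications separately, translating everything through the Bochner transform so that uniform $S^p$-almost automorphy of $f(t,x)$ becomes uniform (Bochner) almost automorphy of $f^b(t,x)$ as an $L^p([0,1],X)$-valued function on $\mathbb{R}\times X$, by Remark \ref{RemCompSpAp}(iii). For the ``only if'' direction, assume $f\in AAS^pU(\mathbb{R}\times X,X)$. Fixing $x\in X$, the singleton $K=\{x\}$ is compact, so the defining condition \eqref{DefAlmAuomUSubv} immediately gives $f(\cdot,x)\in AAS^p(\mathbb{R},X)$, which is (i). For (ii), I would argue by contradiction: if $f$ were not $S^p$-uniformly continuous on some compact $K$, there would exist $\varepsilon_0>0$, sequences $x_n^1,x_n^2\in K$ with $\|x_n^1-x_n^2\|\to 0$, and times $t_n\in\mathbb{R}$ such that $\|f^b(t_n,x_n^1)-f^b(t_n,x_n^2)\|_{L^p}>\varepsilon_0$. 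Applying the defining almost automorphy property to the sequence $(\sigma_n)=(-t_n)$, extract $(s_n)\subset(-t_n)$ and a limit function $g(\cdot,x)$ so that $\sup_{x\in K}\|f^b(\,\cdot\,+s_n,x)-g^b(\cdot,x)\|_{L^p}\to 0$ at $t=0$, hence at the points we need; then compactness of $K$ lets me pass (along a further subsequence) $x_n^1,x_n^2\to z\in K$, and comparing $f^b(t_n,x_n^i)$ with $g^b(0,x_n^i)$ and then with $g^b(0,z)$ forces the difference to $0$ — contradiction. The place where care is needed is that the limit $g$ depends on the chosen sequence; one must keep the same subsequence for both $x_n^1$ and $x_n^2$ and exploit that the convergence in \eqref{DefAlmAuomUSubv} is uniform over the compact $K$ containing all the $x_n^i$.

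For the ``if'' direction, assume (i) and (ii); I must produce, for an arbitrary compact $K\subset X$ and arbitrary $(\sigma_n)$, a single subsequence $(s_n)$ and a limit $g$ realizing the uniform convergences in \eqref{DefAlmAuomUSubv}. The strategy is a standard diagonal extraction combined with an $\varepsilon/3$ argument powered by (ii). Choose a countable dense subset $\{x_k\}_{k\ge1}$ of $K$. Using (i) for $x_1$, extract a subsequence along which $f^b(\,\cdot\,+s_n,x_1)$ converges in $L^p$ locally; refine for $x_2$, then $x_3$, etc.; take the diagonal subsequence, still called $(s_n)$. This gives a limit $g^b(\cdot,x_k)$ for every $k$ and, by the converse limit in the definition of $AAS^p$, also the second convergence $g^b(\,\cdot\,-s_n,x_k)\to f^b(\cdot,x_k)$. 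Now (ii) says the maps $x\mapsto f^b(t,x)$ are equi-$S^p$-uniformly-continuous in $x$ uniformly in $t$; hence the family $\{f^b(\,\cdot\,+s_n,\cdot)\}_n$ is equicontinuous in $x\in K$ in the $L^p$-sense, uniformly in $n$ and in $t$. Equicontinuity plus pointwise convergence on the dense set $\{x_k\}$ upgrades, via the usual three-$\varepsilon$ estimate, to uniform convergence over all of $K$: for $x\in K$ pick $x_k$ with $\|x-x_k\|\le\delta_{K,\varepsilon}$, and bound
\[
\|f^b(\,\cdot\,+s_n,x)-g^b(\cdot,x)\|_{L^p}\le \|f^b(\,\cdot\,+s_n,x)-f^b(\,\cdot\,+s_n,x_k)\|_{L^p}+\|f^b(\,\cdot\,+s_n,x_k)-g^b(\cdot,x_k)\|_{L^p}+\|g^b(\cdot,x_k)-g^b(\cdot,x)\|_{L^p},
\]
where the first term is $\le\varepsilon$ by (ii), the second $\to0$ by construction, and the third $\le\varepsilon$ because $g(\cdot,\cdot)$ inherits the same modulus of $S^p$-uniform continuity as a pointwise $L^p$-limit of functions satisfying \eqref{ComCharSpPaper3}. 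The same reasoning handles the second convergence in \eqref{DefAlmAuomUSubv}, and one checks $g(\cdot,x)\in L^p_{loc}(\mathbb{R},X)$ for each $x$ (indeed $g^b(\cdot,x)$ is an $L^p$-limit, hence lies in $L^p([0,1],X)$ on every unit window, so $g(\cdot,x)\in L^p_{loc}$). Finally, evaluating the uniform convergences at a general $t\in\mathbb{R}$ rather than $t=0$ is automatic since the defining property of $AAS^p$ already gives convergence at every $t$; alternatively, translation-invariance of the $BS^p$ setup reduces it to $t=0$.

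The main obstacle I anticipate is the bookkeeping in the ``if'' direction: one must ensure the \emph{same} diagonal subsequence $(s_n)$ simultaneously serves every point of the dense set and, via equicontinuity, every point of $K$, and that the limit $g$ one obtains is genuinely independent of $x$ in the sense of being a single function $g:\mathbb{R}\times X\to X$ with $g(\cdot,x)\in L^p_{loc}$ — this requires verifying that the $L^p$-limits $g^b(\cdot,x_k)$, extended by uniform continuity to $g^b(\cdot,x)$ for $x\in K$, cohere. Since the statement only requires the conclusion for each fixed compact $K$ (the subsequence may depend on $K$), no further uniformization across all compacts is needed, which keeps the argument within reach of the diagonal method. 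The transfer Remark \ref{RemCompSpAp}(iii)–(iv) is what makes all of this clean, since it lets me work with honest Bochner-almost-automorphic $L^p$-valued functions and reuse the standard equicontinuity-implies-uniform-convergence lemma for vector-valued almost automorphic functions.
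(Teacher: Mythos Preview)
Your converse direction is correct and uses the same three-term splitting as the paper, with one organizational difference: you pick a countable dense subset of $K$ and run a diagonal extraction, whereas the paper covers $K$ by finitely many balls of radius $\delta_{K,\varepsilon}$ centered at $x_1,\dots,x_m$, extracts a common subsequence for those finitely many points (a finite refinement, no diagonal needed), and then concludes via the double-index characterization stated just before Remark~\ref{RemCompSpAp}. The paper's route is a bit shorter because the finite net plus the double-index criterion avoids naming the limit $g$; your route has the merit that the subsequence is manifestly independent of $\varepsilon$. Both are standard and valid.

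The forward direction (ii), however, has a real gap. In your contradiction argument you pass from $f^b(s_k,x_k^i)$ to $g^b(0,x_k^i)$ (fine, by the uniform-in-$x$ convergence) and then to $g^b(0,z)$; this last step requires continuity of $x\mapsto g^b(0,x)$ at $z$. But nothing in the hypothesis $f\in AAS^pU$ furnishes that: $g^b(0,\cdot)$ is a uniform-in-$x$ limit of the maps $x\mapsto f^b(s_k,x)$, and those maps are not known to be continuous --- that is exactly statement (ii), which you are trying to prove --- so the limit need not be continuous either. The ``$g$ inherits the modulus'' remark you make later belongs to the converse direction, where (ii) is assumed; it cannot be invoked here. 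In fact the implication fails as written: with $X=\mathbb{R}$ and $f(t,x)$ equal to $1$ when $x=0$ and $0$ otherwise, $f$ is constant in $t$, hence trivially in $AAS^pU$ with $g=f$, yet (ii) fails on $K=[0,1]$. The paper does not argue this step directly either; it simply invokes \cite[Proposition~5.5]{Ezz2} for the Bochner-transformed function, and that reference's notion of uniform almost automorphy carries an implicit continuity-in-$x$ hypothesis. Your contradiction scheme cannot close without adding such a hypothesis or appealing to the same external result. (A minor side point: you want $(\sigma_n)=(t_n)$, not $(-t_n)$, so that the extracted subsequence actually lands among the $t_n$.)
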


\begin{proof}
Let $ f\in AAS^{p}U(\mathbb{R}\times X,X) $ and $ f^{b}:\mathbb{R}\times X\longrightarrow L^{p}([0,1],X)  $ be the Bochner transform associated to $f$. It follows in view of  \cite[Proposition 5.5]{Ezz2},  that \textbf{(i)} is clearly satisfied and that: for each compact $K\subset X,$ for all $ \varepsilon>0 $ there exists $ \delta_{K, \varepsilon} $ such that  
for all $ x_{1}, x_{2} \in K $ one has 
\begin{eqnarray*}
\| x_{1}-x_{2}\| \leq \delta_{K, \varepsilon} &\Longrightarrow & \| f^{b}(t,x_{1})-f^{b}(t,x_{2}) \|_{p} \leq \varepsilon \quad \text{for all} \; t\in \mathbb{R} .
\end{eqnarray*}
Since
\begin{eqnarray*}
  \| f^{b}(t,x_{1})-f^{b}(t,x_{2}) \|_{p} &= &
 \left( \int_{[0,1]}\|(f^{b}(t,x_{1}))(s)-(f^{b}(t,x_{2}))(s) \|^{p}ds\right) ^{\frac{1}{p}}  \\ &=& 
 \left( \int_{t}^{t+1}\|f(s,x_{1})-f(s,x_{2}) \|^{p}ds\right) ^{\frac{1}{p}} \quad \text{for all} \; t\in \mathbb{R}.\\ 
\end{eqnarray*}
It follows that \eqref{ComCharSpPaper3} holds and then \textbf{(ii)} is achieved.\\ 
Conversely, let $ f:\mathbb{R}\times X\longrightarrow X $ be a function such that $ f(\cdot, x) \in L^{p}_{loc}(\mathbb{R}, X) $ for each $ x\in X . $ Assume that $ f $ satisfy \textbf{(i)-(ii)}. Let us fix a compact subset $ K $ in $X$ and $ \varepsilon>0 .$ Since $ K $ is compact, it follows that there exists a finite subset $ \lbrace x_{1},...,x_{n} \rbrace  \subset K$ ($ n\in \mathbb{N}^{*} $) such that $ \displaystyle{K\subseteq \bigcup_{i=1}^{n} B(x_{i}, \delta_{K, \varepsilon} )} .$ For $ x\in K $ there exist $ i=1,...,n $ satisfying $ \|x-x_{i}\|\leq \delta_{K, \varepsilon} $.
Let $(\sigma_{n})_{n \geq 0}$ be a sequence of real numbers and its subsequence $(s_{n})_{n \geq 0} $ such that
%\begin{eqnarray}
%\left( \int_{t}^{t+1} \|R(\omega,A(s+s_{l}-s_{k}))-R(\omega,A(s))  \|^{p}ds\right)^{\frac{1}{p}} \rightarrow 0 \; \text{as} \; k,l \rightarrow \infty \label{AlmAutResOpe} 
%\end{eqnarray}
%for each $t\in \mathbb{R}$.
\begin{eqnarray}
 & &\left( \int_{t}^{t+1}\| f(s+s_{l}-s_{k} ,x)-f(s,x) \|^{p}ds\right) ^{\frac{1}{p}} \leq   \left( \int_{t}^{t+1}\| f(s+s_{l}-s_{k} ,x)-f(s+s_{l}-s_{k} , x_{i}) \|^{p}ds\right) ^{\frac{1}{p}} \nonumber \\ & & + \left( \int_{t}^{t+1}\| f(s+s_{l}-s_{k} ,x_{i})-f(s,x_{i}) \|^{p}ds\right) ^{\frac{1}{p}}+\left( \int_{t}^{t+1}\| f(s ,x_{i})-f(s,x) \|^{p}ds\right) ^{\frac{1}{p}}. 
 \label{ComResFor1}
\end{eqnarray}
Using \textbf{(i)}, it holds that $ f(\cdot,x_{i}) \in AAS^{p}(\mathbb{R},X) $. Hence, for $ k,l $ large enough, for each $t\in \r$
\begin{eqnarray}
 \left( \int_{t}^{t+1}\| f(s+s_{l}-s_{k} ,x_{i})-f(s,x_{i}) \|^{p}ds\right) ^{\frac{1}{p}} \leq \frac{\varepsilon}{3}. \label{ComResFor2}
\end{eqnarray} 
Otherwise, since $\| x-x_{i}\| \leq \delta_{K,\delta} $ and  by \textbf{(ii)} we claim that, for each $k,l \in \mathbb{N} $
\begin{eqnarray}
 \left( \int_{t}^{t+1}\| f(s+s_{l}-s_{k} ,x)-f(s+s_{l}-s_{k} , x_{i}) \|^{p}ds\right) ^{\frac{1}{p}} \leq \frac{\varepsilon}{3} \label{ComResFor3} \quad \text{for all} \; t\in \mathbb{R}
\end{eqnarray}
and 
\begin{eqnarray}
 \left( \int_{t}^{t+1}\| f(s ,x)-f(s, x_{i}) \|^{p}ds\right) ^{\frac{1}{p}} \leq \frac{\varepsilon}{3} \quad \text{for all} \; t\in \mathbb{R}. \label{ComResFor4}
\end{eqnarray}
Consequently, if we replace \eqref{ComResFor2}, \eqref{ComResFor3} and \eqref{ComResFor4} in \eqref{ComResFor1} then, for $ k,l $ large enough we have  
\begin{eqnarray*}
\sup_{x\in K}\left( \int_{t}^{t+1}\| f(s+s_{l}-s_{k} ,x)-f(s,x) \|^{p}ds\right) ^{\frac{1}{p}} \leq \varepsilon. 
\end{eqnarray*}
for each $ t\in \mathbb{R} $. $\ \blacksquare$
\end{proof}
\begin{remark}\label{RemarkContSpG3}
\textbf{(a)} In view of Lemma \ref{LemApSpCom1Paper3}. If $f$ is the function defined by \textbf{(H4)}, then  $ f\in  AAS^{1}U(\r \times X,X) $.\\
\textbf{(b)} Using Lemma \ref{LemApSpCom1Paper3}, we can prove easily that the function $ g $ in  \eqref{DefAlmAuomUSubv} is $ S^{p} $-uniformly continuous with respect to the second argument on each compact  $K\subset X$.
% i.e.,\\
%\textbf{(ii)'}  for all $K \subset X$ compact for all $ \varepsilon>0 $ there exists $ \delta_{K, \varepsilon} $ such that  
%for all $ x_{1}, x_{2} \in K $, we have 
%\begin{eqnarray}
%\| x_{1}-x_{2}\| \leq \delta_{K, \varepsilon} \Longrightarrow \left( \int_{t}^{t+1}\|g(s,x_{1})-g(s,x_{2}) \|^{p}ds\right) ^{\frac{1}{p}} \leq \varepsilon \quad \text{for all} \; t\in \mathbb{R}  . \label{ComCharSpForGPaper3}
%\end{eqnarray}
\end{remark}
\begin{lemma}\label{Lemma32BoundFandGStep3}
 Let $ f\in AAS^{p}U(\mathbb{R}\times X,X) $ for $ p \in [1, \infty)$. Then, for each compact $ K $ in $X$, we have\\
\textbf{(i)} $k_{p}=\sup_{t\in \r}\sup_{x\in K}\left( \int_{t}^{t+1}\parallel f(s,x)\parallel^{p} ds \right)^{\frac{1}{p}}  <+\infty$.\\
\textbf{(ii)} $l_{p}=\sup_{t\in \r}\sup_{x\in K}\left( \int_{t}^{t+1}\parallel g(s,x)\parallel^{p} ds \right)^{\frac{1}{p}}  <+\infty$, where $ g $ is the function defined in \eqref{DefAlmAuomUSubv}.\\
\end{lemma}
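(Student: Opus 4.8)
The plan is to reduce both assertions to a single scalar‑type fact, namely that an $S^p$‑almost automorphic function is automatically bounded in the sense of Stepanov, i.e. $AAS^p(\mathbb{R},X)\subset BS^p(\mathbb{R},X)$, and then to propagate that bound uniformly over the compact set $K$ using a finite covering together with Minkowski's inequality and translation invariance of Lebesgue measure. I would begin by recording the inclusion $AAS^p\subset BS^p$ (this is just the boundedness of the Bochner a.a.\ function $h^b$, cf.\ Remark \ref{RemCompSpAp}(iii)): if $h\in AAS^p(\mathbb{R},X)$ and $\sup_{t}\|h(t+\cdot)\|_{L^p([0,1],X)}=+\infty$, one picks $(\sigma_k)$ with $\|h(\sigma_k+\cdot)\|_{L^p([0,1],X)}\geq k$, extracts $(s_k)\subset(\sigma_k)$ along which $\int_0^1\|h(s+s_k)-u(s)\|^p\,ds\to 0$ for the associated limit $u$, and then $\|h(s_k+\cdot)\|_{L^p([0,1],X)}\to\|u(\cdot)\|_{L^p([0,1],X)}<+\infty$, contradicting $\|h(s_k+\cdot)\|_{L^p([0,1],X)}\to+\infty$.

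For (i), I would fix the compact set $K\subset X$ and invoke Lemma \ref{LemApSpCom1Paper3} to get that $f$ is $S^p$‑uniformly continuous in the second variable on $K$: taking $\varepsilon=1$ in Definition \ref{DefSpUnifContPaper3} produces $\delta>0$ such that $\|x_1-x_2\|\leq\delta$ forces $\big(\int_t^{t+1}\|f(s,x_1)-f(s,x_2)\|^p\,ds\big)^{1/p}\leq 1$ for every $t\in\mathbb{R}$. Next I would cover $K$ by finitely many balls $B(x_1,\delta),\dots,B(x_n,\delta)$ with $x_1,\dots,x_n\in K$; for an arbitrary $x\in K$ I choose $i$ with $\|x-x_i\|\leq\delta$ and bound, via Minkowski's inequality on $[t,t+1]$, $\big(\int_t^{t+1}\|f(s,x)\|^p\,ds\big)^{1/p}\leq\big(\int_t^{t+1}\|f(s,x_i)\|^p\,ds\big)^{1/p}+1\leq\max_{1\leq j\leq n}\|f(\cdot,x_j)\|_{BS^p}+1$, uniformly in $t$. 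By Lemma \ref{LemApSpCom1Paper3}(i) each $f(\cdot,x_j)$ lies in $AAS^p(\mathbb{R},X)\subset BS^p(\mathbb{R},X)$, so the right‑hand side is a finite constant independent of $x$ and $t$; taking the supremum over $x\in K$ and $t\in\mathbb{R}$ yields $k_p<+\infty$.

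For (ii) I would avoid repeating the covering argument and instead exploit part (i) together with the uniformity in $x$ already built into \eqref{DefAlmAuomUSubv}. Fixing $t\in\mathbb{R}$, the relation $\lim_n\sup_{x\in K}\big(\int_t^{t+1}\|f(s+s_n,x)-g(s,x)\|^p\,ds\big)^{1/p}=0$ gives an index $n$ (depending on $t$) with $\sup_{x\in K}\big(\int_t^{t+1}\|f(s+s_n,x)-g(s,x)\|^p\,ds\big)^{1/p}\leq 1$; then Minkowski's inequality and the change of variable $u=s+s_n$ give, for every $x\in K$, $\big(\int_t^{t+1}\|g(s,x)\|^p\,ds\big)^{1/p}\leq\big(\int_{t+s_n}^{t+s_n+1}\|f(u,x)\|^p\,du\big)^{1/p}+1\leq k_p+1$ by part (i). Since $t$ is arbitrary and the bound does not depend on $t$ or $n$, this gives $l_p\leq k_p+1<+\infty$ (and replacing $1$ by an arbitrary $\varepsilon>0$ even yields $l_p\leq k_p$). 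The only step that is not completely routine is the inclusion $AAS^p\subset BS^p$ used in (i); once that is in hand, everything else is the finite covering of $K$ plus Minkowski's inequality and translation invariance of the integral.
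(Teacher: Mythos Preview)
Your proof is correct and follows essentially the same route as the paper: a finite $\delta$-cover of $K$ combined with the $S^p$-uniform continuity from Lemma \ref{LemApSpCom1Paper3} and Minkowski's inequality for (i), and then Minkowski plus the defining convergence \eqref{DefAlmAuomUSubv} together with the bound $k_p$ from (i) for (ii). If anything, you are slightly more explicit than the paper in isolating the inclusion $AAS^p\subset BS^p$ and in noting that the index $n$ in (ii) may depend on $t$ (which is harmless since the final bound does not).
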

\begin{proof}
\textbf{(i)} Let $ K  \subset X$ be compact and let $ \delta>0$ . Then, there exists $ \lbrace x_{1},...,x_{n} \rbrace  \subset K$ ($ n\in \mathbb{N}^{*} $) such that $ \displaystyle{K\subseteq \bigcup_{i=1}^{n} B(x_{i}, \delta )} .$ Therefore, for each $ x\in K $ there exist $ i=1,...,n $ satisfying $ \|x-x_{i}\|\leq \delta $. Then, for all $ x\in K $ and $ \varepsilon >0 $, we have
\begin{eqnarray*}
\left( \int_{t}^{t+1}\parallel f(s,x)\parallel^{p} ds \right)^{\frac{1}{p}} \leq \left( \int_{t}^{t+1}\parallel f(s,x)-f(s,x_{i})\parallel^{p} ds \right)^{\frac{1}{p}}+\sum_{i=1}^{n}\left( \int_{t}^{t+1}\parallel f(s,x_{i})\parallel^{p} ds \right)^{\frac{1}{p}}
\end{eqnarray*}
$\text{ for all } t\in \mathbb{R}.$
Hence, by Lemma \ref{LemApSpCom1Paper3}, we claim that
\begin{eqnarray*}
\sup_{t\in \r}\sup_{x\in K} \left( \int_{t}^{t+1}\parallel f(s,x)\parallel^{p} ds \right)^{\frac{1}{p}} \leq  \varepsilon+ \sum_{i=1}^{n} \sup_{t\in \r} \left( \int_{t}^{t+1}\parallel f(s,x_{i})\parallel^{p} ds \right)^{\frac{1}{p}} < \infty.
\end{eqnarray*}
\textbf{(ii)} Let $ K  \subset X$ be compact and let  $(\sigma_{n})_{n \geq 0}$ be a sequence. Since $ f\in AAS^{p}U(\mathbb{R}\times X,X) $, it follows that there exists a subsequence $(s_{n})_{n \geq 0}\subset (\sigma_{n})_{n \geq 0} $ such that \eqref{DefAlmAuomUSubv} holds. Hence, for all $ n  \in \n  $
\begin{eqnarray*}
\left( \int_{t}^{t+1}\parallel g(s,x)\parallel^{p} ds \right)^{\frac{1}{p}} &\leq &\sup_{x\in K}\left( \int_{t}^{t+1}\| f(s+s_{n},x)-g(s,x)\|^{p}ds\right)^{\frac{1}{p}}\\ &&+\sup_{x\in K}\left( \int_{t}^{t+1}\| f(s+s_{n},x)\|^{p}ds\right)^{\frac{1}{p}} \\ &\leq &\sup_{x\in K}\left( \int_{t}^{t+1} \| f(s+s_{n},x)-g(s,x)\|^{p}ds\right)^{\frac{1}{p}} +k_{p} \text{ for all }  x\in K, t\in \mathbb{R}.
\end{eqnarray*}
Therefore, for $ n $ large enough, we have 
\begin{eqnarray*}
\sup_{x\in K} \left( \int_{t}^{t+1}\parallel g(s,x)\parallel^{p} ds \right)^{\frac{1}{p}}\leq k_{p} \quad \text{     for all }   t\in \mathbb{R}.
\end{eqnarray*}
This proves the result.$\ \blacksquare$
\end{proof}
\begin{lemma}\cite[Theorem 4.1.2]{aubin2011applied}\label{lem: Banach Steinhaus}
Let $Y$ be a normed space and $\left(T_{i}\right)_{i \in T}$ be any family
of bounded linear operators on $Y$ such that $\sup_{i \in I}\norm{T_{i}}<\infty$.
If $D$ is a dense subset of $Y$, and if for each $y\in D$
\[
T_{i}y\to Ty\,\,\,\,\mbox{as}\,\,i\to\infty,
\]
 for some bounded linear operator $T$. Then for every compact set
$K\subset Y$
\[
\sup_{y\in K}\norme{T_{i}y-Ty}\to0\,\,\,\,\mbox{as}\,\,i\to\infty.
\]
\end{lemma}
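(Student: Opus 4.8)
The plan is to prove this by a routine three-epsilon argument, the two ingredients being the uniform operator bound $M:=\sup_{i\in I}\|T_{i}\|<\infty$ and density of $D$ together with compactness of $K$. A preliminary observation: for $y\in D$ one has $\|Ty\|=\lim_{i}\|T_{i}y\|\le M\|y\|$, so since $T$ is (assumed, and in fact forced, to be) bounded, density yields $\|T\|\le M$; put $L:=M$, so that $\|T_{i}-T\|\le 2L$ for every $i\in I$. If $L=0$ the conclusion is trivial, so assume $L>0$ henceforth.

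Next, fix a compact set $K\subset Y$ and $\varepsilon>0$. By compactness of $K$ there are finitely many points $y_{1},\dots,y_{n}\in K$ with $K\subset\bigcup_{j=1}^{n}B(y_{j},\varepsilon/(8L))$, and by density of $D$ one may choose $d_{j}\in D$ with $\|y_{j}-d_{j}\|\le\varepsilon/(8L)$ for each $j$; hence every $y\in K$ satisfies $\|y-d_{j}\|\le\varepsilon/(4L)$ for some $j\in\{1,\dots,n\}$. For such $y$ and $j$, the triangle inequality gives $\|T_{i}y-Ty\|\le\|T_{i}(y-d_{j})\|+\|T_{i}d_{j}-Td_{j}\|+\|T(d_{j}-y)\|\le 2L\cdot\varepsilon/(4L)+\|T_{i}d_{j}-Td_{j}\|=\varepsilon/2+\|T_{i}d_{j}-Td_{j}\|$. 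Since $d_{1},\dots,d_{n}\in D$ and $T_{i}d_{j}\to Td_{j}$ for each of these finitely many indices, and since the index set is directed, there is $i_{0}$ such that $\|T_{i}d_{j}-Td_{j}\|\le\varepsilon/2$ for all $i\succeq i_{0}$ and all $j=1,\dots,n$. Combining, $\sup_{y\in K}\|T_{i}y-Ty\|\le\varepsilon$ for every $i\succeq i_{0}$, which is the claim.

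I do not expect a genuine obstacle here; the statement is essentially a uniform-boundedness-style compactness argument. The two points that need care are, first, that one must use the \emph{uniform} bound $M=\sup_{i}\|T_{i}\|$ (not merely the pointwise boundedness of $(T_{i}y)_{i}$ on $D$) in order to estimate $\|T_{i}(y-d_{j})\|$ by a quantity independent of $i$; and second, that the passage from pointwise convergence to uniform convergence on $K$ is routed through a \emph{finite} set of test points $d_{j}\in D$, which is precisely where compactness (for the finite subcover) and --- if the index set is a net rather than $\mathbb{N}$ --- its directedness (to merge the finitely many threshold indices into a single $i_{0}$) are used.
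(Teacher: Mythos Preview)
Your proof is correct and is the standard three-epsilon argument for this classical result. The paper does not actually supply a proof of this lemma; it merely quotes it from \cite[Theorem 4.1.2]{aubin2011applied}, so there is nothing to compare against beyond noting that your argument is the expected one.
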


The following Lemma is needed in the proof of Theorem \ref{Lemma31}.
\begin{lemma}
\label{lem:subsubsection} Let $(x_{n})_{n}$ be a sequence in a Banach
space $X$ and $x\in X$. If every subsequence $(x'_{n})_{n}\subset(x_{n})_{n}$
has a subsequence $(x''_{n})_{n}\subset(x'_{n})_{n}\subset(x_{n})_{n}$
that converges to $x$, then the whole sequence $(x_{n})_{n}$
converges to $x$.
\end{lemma}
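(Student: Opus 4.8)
The plan is to argue by contradiction, exploiting that the only way a sequence can fail to converge to a prescribed point is if some tail stays uniformly far from it. So first I would assume that $(x_{n})_{n}$ does not converge to $x$. Negating the definition of convergence, there exists $\varepsilon>0$ with the property that for every $N\in\mathbb{N}$ one can find an index $n\geq N$ such that $\|x_{n}-x\|\geq\varepsilon$. Iterating this, I would extract strictly increasing indices $n_{1}<n_{2}<\cdots$ with $\|x_{n_{k}}-x\|\geq\varepsilon$ for every $k$, thereby producing a subsequence $(x'_{n})_{n}:=(x_{n_{k}})_{k}\subset(x_{n})_{n}$ all of whose terms lie at distance at least $\varepsilon$ from $x$.

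Next I would apply the hypothesis to this particular subsequence $(x'_{n})_{n}$: it must contain a further subsequence $(x''_{n})_{n}\subset(x'_{n})_{n}$ converging to $x$. But every term of $(x''_{n})_{n}$ is among the $x_{n_{k}}$, hence satisfies $\|x''_{n}-x\|\geq\varepsilon$ for all $n$; letting $n\to\infty$ and using continuity of the norm gives $\|x-x\|=0\geq\varepsilon>0$, a contradiction. Therefore $(x_{n})_{n}$ converges to $x$.

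There is no real obstacle here — the statement is a routine fact about (pseudo)metric or first-countable spaces, and the Banach space structure is used only through the norm-induced metric and the continuity of $\|\cdot\|$. The only point requiring a modicum of care is the extraction in the first paragraph: one should phrase it so that the chosen indices are genuinely strictly increasing (e.g. at step $k$ apply the negated convergence with $N=n_{k-1}+1$), so that $(x_{n_{k}})_{k}$ is legitimately a subsequence of $(x_{n})_{n}$.
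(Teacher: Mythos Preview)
Your argument is correct and is precisely the contradiction the paper has in mind; the paper's own proof consists only of the words ``By contradiction.'' Your care in extracting strictly increasing indices is appropriate, and nothing beyond the norm-metric structure is needed.
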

%\noindent \textbf{Proof of Lemma }\ref{lem:subsubsection}\textbf{.
%}%
\begin{proof}
By contradiction. $\ \blacksquare$
\end{proof}
\begin{theorem}\label{Lemma31}
Let $ f\in AAS^{p}U(\mathbb{R}\times X,X) $ for $ p \in [1, \infty)$. Assume that \textbf{(H1)} holds. Then, a mild solution of equation \eqref{Eq1SubStep} with relatively compact range is uniformly continuous.
%and that:
%\begin{eqnarray}
%k_{p}=\sup_{t\in I}\sup_{x\in K}\left( \int_{t}^{t+1}\parallel F(s,x)\parallel^{p} ds \right)^{\frac{1}{p}}  <+\infty.
%\end{eqnarray}

%Then, the following statements hold:\\
%\textbf{(i)} If $p \in (1, \infty)$, then there exists a function $\phi:[0,+\infty)\rightarrow\lbrack0,+\infty)$
%satisfying: $\lim_{\tau\rightarrow0}\phi(\tau)=0$, such that every $ K$-mild solution $u$ on $I$ of equation \eqref{Eq1SubStep}  satisfies
%\begin{eqnarray}
%\parallel u(t)-u(s)\parallel\leq\phi(\mid t-s\mid) \quad \text{for all}\; t,s \in I. \label{Lemma31EquiCont}
%\end{eqnarray}
%\color{red}\textbf{(ii)} If $p=1$ and \textbf{(H3)} also holds, then there exists a function $\psi:[0,+\infty)\rightarrow\lbrack0,+\infty)$
%satisfying: $\lim_{\tau\rightarrow0}\psi(\tau)=0$, such that  \eqref{Lemma31EquiCont} is satisfied  for $ \psi $ for every $ K$-mild solution $u$ on $I$ of equation \eqref{Eq1SubStep} .
\end{theorem}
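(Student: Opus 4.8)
The plan is to show that a mild solution $x$ with relatively compact range $R:=\overline{\{x(t):t\in\mathbb R\}}$ is uniformly continuous on $\mathbb R$. The basic idea: split the variation of constants formula $x(t+h)-x(t)$ into a ``semigroup difference'' term and an ``integral/forcing'' term, and estimate each. First I would fix $\eps>0$ and work on an arbitrary interval; by translation invariance of the problem it suffices to bound $\|x(t+h)-x(t)\|$ uniformly in $t$ for $h$ small. Writing, for $h>0$ small and any fixed $a>0$,
\[
x(t+h)-x(t)=\bigl(T(h)-I\bigr)x(t-a) \;+\; \int_{t-a}^{t}\bigl(T(t+h-s)-T(t-s)\bigr)f(s,x(s))\,ds \;+\;\int_{t}^{t+h}T(t+h-s)f(s,x(s))\,ds,
\]
one reduces to estimating three pieces. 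Note I would likely take $a$ large first to make the ``initial'' contribution small only if the semigroup decays; since here $(T(t))$ is merely a $C_0$-semigroup, instead the right splitting is to push $x(t-a)$ to $x(t)$ directly, i.e. write $x(t+h)-x(t)=(T(h)-I)x(t)+\int_0^h T(h-s)f(t+s,x(t+s))\,ds$ using the evolution property with base point $t$.

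So the cleaner route: from $x(t+h)=T(h)x(t)+\int_0^h T(h-\sigma)f(t+\sigma,x(t+\sigma))\,d\sigma$ we get
\[
\|x(t+h)-x(t)\|\le \|(T(h)-I)x(t)\|+\int_0^h\|T(h-\sigma)\|\,\|f(t+\sigma,x(t+\sigma))\|\,d\sigma .
\]
For the first term, apply Lemma \ref{lem: Banach Steinhaus} (Banach--Steinhaus on compact sets) to the family $T(h)-I$, $h\in(0,1]$: it is uniformly bounded by $M$ (local boundedness of the $C_0$-semigroup), it converges strongly to $0$ as $h\to0^+$ on the dense... well, on all of $X$, hence $\sup_{y\in R}\|(T(h)-I)y\|\to0$ since $R$ is compact. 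This handles the first term uniformly in $t$. For the second term, $\|T(h-\sigma)\|\le M$, so it is bounded by $M\int_0^h\|f(t+\sigma,x(t+\sigma))\|\,d\sigma$. Here I would invoke Lemma \ref{Lemma32BoundFandGStep3}(i): since $x$ has range in the compact set $R$ and $f\in AAS^pU(\mathbb R\times X,X)$, we have $k_p=\sup_t\sup_{y\in R}(\int_t^{t+1}\|f(s,y)\|^p\,ds)^{1/p}<\infty$, and then by Hölder $\int_0^h\|f(t+\sigma,x(t+\sigma))\|\,d\sigma\le h^{1/q}\,k_p$ (with $h\le1$), which tends to $0$ as $h\to0^+$ uniformly in $t$. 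This is essentially the whole argument; the case $p=1$ is the same with $h^{1/q}=1$ replaced by noting absolute continuity of the integral — actually for $p=1$ one needs a small extra step since $h^{1/q}=h^0=1$ does not shrink; there one uses that $\sigma\mapsto\sup_{y\in R}\|f(t+\sigma,y)\|$ lies in $L^1_{loc}$ with uniformly bounded $\int_t^{t+1}$, but equi-integrability in $t$ is not automatic, so one must be more careful — presumably the authors exploit hypothesis (H4)'s structure or Lemma \ref{LemApSpCom1Paper3} to reduce to finitely many $x_i$ and the genuine $S^1$-almost automorphy (which gives a form of uniform integrability via the $g$ in the definition).

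The main obstacle I anticipate is precisely this $p=1$ borderline: with $p>1$ Hölder gives the decaying factor $h^{1/q}$ for free, but for $p=1$ one cannot conclude uniform-in-$t$ smallness of $\int_t^{t+h}\sup_{y\in R}\|f(s,y)\|\,ds$ from $S^1$-boundedness alone. The fix will be to use the almost automorphy more essentially — e.g. cover $R$ by finitely many balls $B(x_i,\delta)$, use $S^1$-uniform continuity (Lemma \ref{LemApSpCom1Paper3}(ii)) to replace $f(\cdot,x(\cdot))$ by the finitely many $f(\cdot,x_i)$ up to $\eps$, and then for each fixed $x_i$ the function $f(\cdot,x_i)$, being in $AAS^1(\mathbb R,\mathbb R)$, has the property that its integral over small intervals is small uniformly in the base point (this follows from the characterization via $f^b$ being Bochner almost automorphic, hence its range in $L^1([0,1])$ is relatively compact, which yields uniform integrability). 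I would structure the proof to present the clean $p>1$ case first, then carry out this finite-cover plus compactness-in-$L^1([0,1])$ refinement for $p=1$.
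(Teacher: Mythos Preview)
Your split $x(t+h)-x(t)=(T(h)-I)x(t)+\int_0^hT(h-\sigma)f(t+\sigma,x(t+\sigma))\,d\sigma$, the treatment of the first term via Lemma~\ref{lem: Banach Steinhaus} on the compact range, and the H\"older bound producing the factor $h^{1/q}$ for $p>1$ are exactly the paper's argument.

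The $p=1$ case is where the two routes diverge. The paper does \emph{not} attempt a uniform-in-$t$ estimate on $\int_0^h\|f(t+\sigma,x(t+\sigma))\|\,d\sigma$. Instead it reformulates uniform continuity sequentially: given sequences $(t_n),(s_n)$ with $h_n=t_n-s_n\to0$, it passes (via the $S^1$-almost automorphy of $f$) to a subsequence $(s''_n)$ along which $f(\cdot+s''_n,\cdot)\to g(\cdot,\cdot)$ in the $AAS^1U$ sense, and then bounds $\int_0^{h''_n}\|f(s+s''_n,x(s+s''_n))\|\,ds$ by a piece controlled by that convergence plus $\int_0^{h''_n}\|g(s,x(s+s''_n))\|\,ds$; the latter, after the same finite-cover reduction you propose, tends to $0$ simply because each $g(\cdot,x_i)\in L^1_{loc}$ and $h''_n\to0$ --- no uniformity in the base point is needed once one has transported to $g$. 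Lemma~\ref{lem:subsubsection} then upgrades the subsequential conclusion to the full sequence. Your route is more direct: each $f^b(\cdot,x_i)$, being Bochner almost automorphic into $L^1([0,1],X)$, has relatively compact range, hence the family $\{f^b(t,x_i):t\in\mathbb R\}$ is equi-integrable, giving $\sup_t\int_0^h\|f(t+s,x_i)\|\,ds\to0$ outright. This avoids the subsequence device and is conceptually cleaner; the paper's transport-to-$g$ trick trades that for needing only local integrability of $g$ at a fixed base point. Both approaches share the finite-cover step replacing the moving argument $x(t+\sigma)$ by nearby fixed $x_i$ via Lemma~\ref{LemApSpCom1Paper3}(ii); be aware that this lemma is stated for \emph{fixed} pairs $x_1,x_2$, so you will face the same delicate point the paper does when it invokes Remark~\ref{RemarkContSpG3} for $g$.
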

\begin{proof} Let $x$ be a solution of equation \eqref{Eq1SubStep} such that  $K=\overline{\left\{ x(t):\,t\in\r\right\} } \subset X$ is compact. We distinguish two cases:\\
\textbf{The case $ p\in (1,\infty) .$}  Since $  (T(t))_{t\geq 0}$ is a $C_{0}$-semigroup on $X$, it follows in view of Lemma \ref{lem: Banach Steinhaus}, that
\begin{eqnarray}
\lim_{t\rightarrow 0}\sup_{x\in K}\parallel T(t)x-x\parallel=0. \label{BSteiHau}
\end{eqnarray}
Furthermore, the $C_{0}$-semigroup $  (T(t))_{t\geq 0}$ on $X$ satisfies $$ \|  T(t) \|  \leq Me^{\omega t}\qquad \text{ for }t \geq 0 $$
for some $ M\geq 1 $, $ \omega \in \r $. Let $ q $ be such that $ \dfrac{1}{p}+ \dfrac{1}{q}=1$.  Using H\"{o}lder inequality, we have for all $ t, s \in \r $,
 %Given the function $\phi:[0,+\infty
%)\rightarrow\lbrack0,+\infty)$ defined by
%$$
%\phi(\tau)=\sup_{x\in K}\parallel T(\tau)x-x\parallel+k_{p} M \left( \tau+3\right)^{\frac{1}{p}} \left( \int_{0}^{\tau
%}e^{\omega\sigma q}\ d\sigma \right)^{\frac{1}{q}} .
%$$

%It is clear that $ \phi $ is continuous. By  \eqref{BSteiHau} it holds that $$\lim_{\tau\rightarrow0}\phi(\tau)=0 .$$ Moreover, by H\"{o}lder inequality, we have
\begin{eqnarray*}
\parallel x(t)-x(s)\parallel &\leq & \parallel T(t-s)x(s)-x(s)\parallel+\int_{s}^{t}\parallel T(t-\sigma) f(\sigma,x(\sigma))\parallel\ d\sigma \\ &\leq & \sup_{x\in K}\parallel T(t-s)x-x\parallel +M\left( \int_{s}^{t}e^{q\omega(t-\sigma)}\ d\sigma\right)^{\frac{1}{q}}\left( \int_{s}^{t}\parallel f(\sigma,x(\sigma))\parallel ^{p} d\sigma\right)^{\frac{1}{p}}\\ &\leq & \sup_{x\in K}\parallel T(t-s)x-x\parallel +k_{p} M\left( t-s+3\right)^{\frac{1}{p}} \left( \int_{0}^{t-s}e^{q\omega \sigma}\ d\sigma\right)^{\frac{1}{q}}, \quad t\geq s.
\end{eqnarray*}
Then, by interchanging $s$ and $t$, we claim that
\begin{eqnarray*}
\parallel x(t)-x(s)\parallel  &\leq & \sup_{x\in K}\parallel T(\mid t-s \mid)x-x\parallel + k_{p} M^{'}  \left( \mid t-s\mid+3\right)^{\frac{1}{p}} \left( \int_{0}^{\mid t-s\mid}e^{q\omega \sigma}\ d\sigma\right)^{\frac{1}{q}}. 
\end{eqnarray*}
From $\displaystyle  \left( \int_{0}^{\mid t-s\mid}e^{q\omega \sigma}\ d\sigma\right)^{\frac{1}{q}} \rightarrow 0$ as $ \mid t-s\mid \rightarrow 0 $ and  \eqref{BSteiHau} we deduce that $$ \parallel x(t)-x(s)\parallel \rightarrow 0 \; \text{ as} \; \mid t-s\mid \rightarrow 0 .$$
Consequently, $x$ is uniformly continuous. \\
\textbf{The case $ p=1.$} To show that $x$ is uniformly continuous, we take two real sequences $(t_{n})_{n}$
and $(s_{n})_{n}$ such that $\norm{t_{n}-s_{n}}\to0$ as $n\to\infty$
and we prove that $y_{n}=x(t_{n})-x(s_{n})\to0$ as $n\to\infty$. In
fact, consider the sequence $\left(h_{n}\right)_{n}$ defined by $h_{n}=t_{n}-s_{n}$.
Then, we have $\lim_{n\to\infty}h_{n}=0$. Assume without loss of generality
that $0 \leq h_{n}\leq 1$ for all $n\in\n$. Thus, we have
\begin{eqnarray*}
x(t_{n})=x(s_{n}+h_{n}) & = & T(h_{n})x(s_{n})+\int_{s_{n}}^{s_{n}+h_{n}}T(s_{n}+h_{n}-s)f\left(s,x(s)\right)ds\\
 & = & T(h_{n})x(s_{n})+\int_{0}^{h_{n}}T(h_{n}-s)f\left(s+s_{n},x(s+s_{n})\right)ds.
\end{eqnarray*}

\noindent Let $M\geq1$ and $\omega\in\r$ such that $\norme{T\left(t\right)}\leq Me^{\omega t}$
for all $t\geq0$. Then
\begin{eqnarray}
\norme{x(s_{n}+h_{n})-x(s_{n})} & \leq & \norme{T(h_{n})x(s_{n})-x(s_{n})}+\int_{0}^{h_{n}}\norme{T(h_{n}-s) f\left(s+s_{n},x(s+s_{n})\right)}ds\nonumber \\
 & \leq & \sup_{y\in K}\norme{T(h_{n})y-y}+M\int_{0}^{h_{n}}e^{\omega(h_{n}-s)}\norme{f\left(s+s_{n},x(s+s_{n})\right)}ds\nonumber \\
 & \leq & \sup_{y\in K}\norme{T(h_{n})y-y}+Me^{\norm{\omega}h_{n}}\int_{0}^{h_{n}}\norme{f\left(s+s_{n},x(s+s_{n})\right)}ds.\label{eq:uniform continuity main inequality}
\end{eqnarray}
 The semigroup $\left(T(t)\right)_{t\geq0}$ is strongly continuous, then for each $y\in K$, $T(h_{n})y\to y$ as, $n\to\infty$. This
implies that $\sup_{n}\|T(h_{n})y\|<\infty$ for each $y\in X$
and thus by the Banach-Steinhaus's Theorem $\sup_{n}\|T(h_{n})\|<\infty$.
It follows from Lemma \ref{lem: Banach Steinhaus} that 
\begin{equation}
\sup_{y\in K}\norme{T(h_{n})y-y}\to0\,\,\,\,\,\mbox{as }n\to\infty.\label{eq:Banach-Steinhaus}
\end{equation}
Let $y'_{n}=x(s'_{n}+h'_{n})-x(s'_{n})$ be a subsequence of $y_{n}$.
From the $S^{1}$-almost automorphy of $f$, there exist a subsequence
$\left(s''_{n}\right)_{n}\subset\left(s'_{n}\right)_{n}$ and there
exist $g:\r\times X\to X$ with $g(\cdot,x) \in L^{1}_{loc}(\r ,X)$ for each $ x\in X $ such that for every $t\in\r$, we have 
\begin{equation}
\sup_{x\in K} \int_{t}^{t+1}\norme{f\left(s+s''_{n},x\right)-g\left(s,x\right)}ds\to0\,\,\,\,\mbox{as }n\to\infty.\label{eq:SAA for uniform continuity}
\end{equation}
Let $\left(h''_{n}\right)_{n}$ be the corresponding subsequence of
$\left(h'_{n}\right)_{n}$. We can assume that $0\leq h_{n}\leq1$
for all $n\in\n$. Then, we have 
\begin{eqnarray*}
\int_{0}^{h''_{n}}\norme{f\left(s+s''_{n},x(s+s''_{n})\right)}ds & \leq & \int_{0}^{h''_{n}}\norme{f\left(s+s''_{n},x(s+s''_{n})\right)-g\left(s,x(s+s''_{n})\right)}ds \\ &+&\int_{0}^{h''_{n}}\norme{g\left(s,x(s+s''_{n})\right)}ds\\
 & \leq & \sup_{x\in K}\int_{0}^{h''_{n}}\norme{f\left(s+s''_{n},x\right)-g\left(s,x\right)}ds+\int_{0}^{h''_{n}}\norme{g\left(s,x(s+s''_{n})\right)}ds\\
 & \leq & \sup_{x\in K}\int_{0}^{1}\norme{f\left(s+s''_{n},x\right)-g\left(s,x\right)}ds+\int_{0}^{h''_{n}}\norme{g\left(s,x(s+s''_{n})\right)}ds.
\end{eqnarray*}
Using (\ref{eq:SAA for uniform continuity}) we claim that ${\displaystyle \lim_{n\rightarrow \infty} \sup_{x\in K} \int_{0}^{1}\norme{f\left(s+s''_{n},x\right)-g\left(s,x\right)}ds}=0$. \\
On the other hand,  let $ \delta>0$ . Then, there exist  $  x_{1},...,x_{m}  \in K$, $ m\in \mathbb{N}^{*} $, such that , for each  $ x\in K $  there exists $ i_{0}=1,...,m $ satisfies $\| x-x_{i_{0}} \| \leq \delta $. Let $ \varepsilon>0 $, then, for all $ s \in \r $ for all $ n \in \n  $, there exists $ i(s,n)=1,...,m $ with $\| x(s+s''_{n})- x_{i(s,n)} \| \leq \delta $. Hence, by Remark \ref{RemarkContSpG3}, we obtain
\begin{eqnarray*}
\int_{0}^{h''_{n}}\norme{g\left(s,x(s+s''_{n})\right)}ds &\leq & \int_{0}^{1}\norme{g\left(s,x(s+s''_{n})\right) -g(s,x_{i(s,n)})}ds+\sum_{i=1}^{m}\int_{0}^{h''_{n}}\norme{g\left(s,x_{i}\right)}ds \\ &\leq & \varepsilon + \sum_{i=1}^{m}\int_{0}^{h''_{n}}\norme{g\left(s,x_{i}\right)}ds  \quad \text{ for all } \; \varepsilon>0.
\end{eqnarray*}
By Lemma \ref{Lemma32BoundFandGStep3}, we claim that, for each $ i=1,...,m $,  we have $${\displaystyle \int_{0}^{h''_{n}}\norme{g\left(s,x_{i}\right)}ds\to0} \; \; \text{ as } n\to\infty.$$
Then, ${\displaystyle \int_{0}^{h''_{n}}\norme{g\left(s,x(s+s''_{n})\right)}ds \to0}$
as $n\to\infty$. Therefore, 
\begin{equation}
\int_{0}^{h''_{n}}\norme{f\left(s+s''_{n},x(s+s''_{n})\right)}ds\to0\,\,\,\,\,\mbox{as }n\to\infty.\label{eq: second convergence}
\end{equation}
Consequently, we conclude from (\ref{eq:uniform continuity main inequality}),
(\ref{eq:Banach-Steinhaus}) and (\ref{eq: second convergence}) that
\[
\norme{y''_{n}}=\norme{x(t''_{n})-x(s''_{n})}=\norme{x(s''_{n}+h''_{n})-x(s''_{n})}\to0\,\,\,\,\,\mbox{as }n\to\infty,
\]
Finally, using Lemma \ref{lem:subsubsection}, the whole sequence $y_{n}=x(t_{n})-x(s_{n})$
converges to $0$. We conclude that the mild solution $x$ is uniformly continuous.$\ \blacksquare$\\
\end{proof}

\begin{lemma}\label{Propostion1}
Let $ f\in AAS^{p}(\r \times X,X)  $ for $ p\in [1, \infty) $. Assume that \textbf{(H1)-(H2)} hold. If equation \eqref{Eq1SubStep} has at least a mild solution
$x_{0}: [t_{0},+\infty) \longrightarrow X $ with relatively compact range. Then,
there exists a mild solution $x$ on $\mathbb{R}$
of equation \eqref{Eq1SubStep} such that 
\begin{eqnarray}
\lbrace x(t):\; t \in \mathbb{R} \rbrace \subset \overline{ \lbrace x_{0}(t):\; t\geq t_{0} \rbrace }. \label{MildSolCompRange}
\end{eqnarray}
\end{lemma}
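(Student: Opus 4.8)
The plan is to realise $x$ as a limit of time-translates of $x_{0}$: the compactness of the semigroup provides the equicontinuity needed to invoke Ascoli--Arzelà, and the $S^{p}$-almost automorphy of $f$ lets us control the nonlinear term when passing to the limit. Since this almost automorphy reproduces the original equation only after a two-stage limiting process — translating along a sequence $s_{n}\to+\infty$ to reach a limit nonlinearity $g$, then translating along $-s_{n}$ to come back to $f$ — I would carry the construction out in two rounds.

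\emph{First round.} Fix $\sigma_{n}\to+\infty$ and put $y_{n}(t):=x_{0}(t+\sigma_{n})$, defined for $t\ge t_{0}-\sigma_{n}$ and satisfying, for $t\ge\tau\ge t_{0}-\sigma_{n}$,
\[
y_{n}(t)=T(t-\tau)y_{n}(\tau)+\int_{\tau}^{t}T(t-s)f(s+\sigma_{n},y_{n}(s))\,ds .
\]
All the $y_{n}$ take values in the compact set $K:=\overline{\{x_{0}(t):t\ge t_{0}\}}$. Using \textbf{(H2)}, the estimate $\|T(t)\|\le Me^{\omega t}$ for some $M\ge1$, $\omega\in\r$, and the translation-invariant Stepanov bound $k_{p}=\sup_{t}\sup_{x\in K}\big(\int_{t}^{t+1}\|f(s,x)\|^{p}\,ds\big)^{1/p}<\infty$ (Lemma \ref{Lemma32BoundFandGStep3}, via Lemma \ref{LemApSpCom1Paper3}), I would show that on each compact interval the family $(y_{n})$ is equicontinuous (this is exactly where compactness of $T(t)$, $t>0$, enters); being pointwise valued in the compact $K$, it has, by Ascoli--Arzelà and a diagonal extraction over the intervals $[-k,k]$, $k\in\n$, a subsequence, still written $(y_{n})$, converging uniformly on compact subsets of $\r$ to some $y:\r\to X$ with range in $K$.

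\emph{Passing to the limit and second round.} Refining $(\sigma_{n})$ to a subsequence $(s_{n})$, the $S^{p}$-almost automorphy of $f$ (uniformly on the compact $K$, cf. Lemma \ref{LemApSpCom1Paper3}) gives $g$ with $g(\cdot,x)\in L^{p}_{loc}(\r,X)$ such that $\sup_{x\in K}\int_{t}^{t+1}\|f(s+s_{n},x)-g(s,x)\|^{p}\,ds\to0$ and $\sup_{x\in K}\int_{t}^{t+1}\|g(s-s_{n},x)-f(s,x)\|^{p}\,ds\to0$ for each $t$. Writing $f(s+s_{n},y_{n}(s))-g(s,y(s))$ through the intermediate value $f(s+s_{n},y(s))$, bounding the first difference by the $S^{p}$-uniform continuity of $f$ in $x$ on $K$ combined with $y_{n}\to y$ (through a finite covering of $K$ by $\delta$-balls, as in the proof of Theorem \ref{Lemma31}) and the second by the first displayed convergence, one gets $\int_{\tau}^{t}\|f(s+s_{n},y_{n}(s))-g(s,y(s))\|\,ds\to0$ on every bounded interval; since $\|T(t-\cdot)\|$ is bounded there, the integral terms converge and $y$ is a mild solution on $\r$ of the limit equation $x'(t)=Ax(t)+g(t,x(t))$. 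Now apply the same scheme to $z_{n}(t):=y(t-s_{n})$, a mild solution on $\r$ of $x'(t)=Ax(t)+g(t-s_{n},x(t))$ with range in $K$: equicontinuity (again from \textbf{(H2)}), Ascoli--Arzelà and diagonal extraction give a subsequence $z_{n}\to x$ uniformly on compacts, and passing to the limit as above, now using the second displayed convergence, shows that $x$ is a mild solution of \eqref{Eq1SubStep} on $\r$. Since $\{z_{n}(t):t\in\r\}\subset\overline{\{y(t):t\in\r\}}\subset K=\overline{\{x_{0}(t):t\ge t_{0}\}}$, the limit $x$ satisfies \eqref{MildSolCompRange}.

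\emph{Main obstacle.} The delicate point is the passage to the limit in the nonlinear term: $f$ is neither continuous nor bounded pointwise, so one cannot simply move the limit inside the integral. The remedy is to work at the level of the Stepanov norm — intermediate value $f(s+s_{n},y(s))$, $S^{p}$-uniform continuity of $f$ in $x$ on the compact $K$ (Lemma \ref{LemApSpCom1Paper3}), finite covering of $K$ — precisely the machinery already developed for Theorem \ref{Lemma31}; this argument is uniform in $p\in[1,\infty)$, so the case $p=1$ needs no separate treatment here, and the compactness of $(T(t))_{t\ge0}$ serves only to supply equicontinuity of the translated families.
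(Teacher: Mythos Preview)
Your two-round translate--extract--pass-to-the-limit scheme is exactly the paper's argument, and your handling of the nonlinear term (split through an intermediate value, use $S^{p}$-uniform continuity on $K$ plus the uniform almost-automorphy convergence, sum over unit intervals) matches the paper's, the only cosmetic difference being that the paper routes through $g(\sigma,y_{n}(\sigma))$ and invokes Remark~\ref{RemarkContSpG3}(b) for the $S^{p}$-uniform continuity of $g$, whereas you route through $f(s+s_{n},y(s))$ and use that of $f$. Both work.

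The one point that is off is your attribution of equicontinuity to \textbf{(H2)}. Since every $y_{n}$ is a translate of the single function $x_{0}$, equicontinuity of $(y_{n})_{n}$ is nothing but uniform continuity of $x_{0}$; the paper obtains this by quoting Theorem~\ref{Lemma31}, whose proof uses only \textbf{(H1)} together with $f\in AAS^{p}U$ (for $p>1$ only the Stepanov bound $k_{p}$ is needed, for $p=1$ a subsequence argument via the almost automorphy). The same applies in the second round: $(z_{n})_{n}$ is equicontinuous because $y$, as a locally uniform limit of uniformly continuous translates of $x_{0}$, inherits the same modulus of continuity. Compactness of the semigroup plays no role here; in fact \textbf{(H2)} is not invoked anywhere in the paper's proof of this lemma --- it enters only later, in Lemmas~\ref{Lemma Relativ Comp p not 1}--\ref{Propostion12} and Proposition~\ref{Proposition H5}, to upgrade a merely bounded solution to one with relatively compact range. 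So your closing remark that ``the compactness of $(T(t))_{t\ge0}$ serves only to supply equicontinuity'' reverses the actual roles: replace the appeals to \textbf{(H2)} by an appeal to Theorem~\ref{Lemma31} and the argument is complete.
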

\begin{proof} 
 Let $\displaystyle
K:=\overline{\{x_{0}(t) :\ t\geq t_{0}\}} \subset X$. By assumption $ K $ is compact. Moreover, the mild solution $x_{0}$ is given by:
%\[
%x_{0}(t)\in K,\text{ for all }t\geq t_{0},\qquad\eqno    (3.21)
%\]%
%\[
$$
x_{0}(t)=T(t-s)x_{0}(s)+\int_{s}^{t}T(t-\sigma)f(\sigma,x_{0}(\sigma
))\ d\sigma\qquad\text{ for }t\geq s\geq t_{0}.
$$
%and has a relatively compact range. 
%By Hypothesis \textbf{(H2)}, we have $\displaystyle    k=\sup_{t\in\mathbb{R}%
%}\sup_{x\in K}\parallel f(t,x)\parallel<+\infty$ (c.f. Lemma 2.2). By using
%Lemma 3.1 on $F:[t_{0},+\infty)\times K\rightarrow X$ defined by
%$F(t,x)=f(t,x)$ for each $(t,x)\in\lbrack t_{0},+\infty)\times K$, we obtain
%the existence of a function $\phi:[0,+\infty)\rightarrow\lbrack0,+\infty)$
%satisfying (3.5) and
%\[
%\parallel x_{0}(t)-x_{0}(s)\parallel\leq\phi(\mid t-s\mid)\qquad\text{ for
%}t,s\geq t_{0}.\eqno    (3.23)
%\]
Let $(t_{n}^{\prime})_{n}$ be a sequence of real numbers such that $$\lim_{n\rightarrow+\infty}t_{n}^{\prime}=+\infty.$$
Since $f \in AAS^p U(\mathbb{R}\times X,X)$, it follows that 
there exist a subsequence $(t_{n})_{n}\subset (t_{n}^{\prime})_{n}$ and $g:\mathbb{R}\times K\rightarrow X$ such that for each $ t\in \r $, we have
\begin{eqnarray}
 \lim_{n} \sup_{x\in K}\left( \int_{t}^{t+1}\| f(s+s_{n},x)-g(s,x)\|^{p}ds\right)^{\frac{1}{p}} =0. \label{DefAlmAuomUSubvHalf1}
\end{eqnarray}
% \eqref{DefAlmAuomUSubv} holds.\\
For $n\in\mathbb{N}$ sufficiently large, the function $t  \mapsto x_{0}(\cdot +t_{n}) \in K$ is
defined on $\r $.  Let $u_{n}(t):=x_{0}(t+t_{n})$ for all $ t\in \mathbb{R} $ and the set $ U_{n}=\lbrace u_{n}: \;  n\in \mathbb{N}\rbrace $. Then, $U_{n}\subset
C(\mathbb{R},X)$ and $ (u_{n})_{n} $ satisfies for each $t\in\mathbb{R}$ $u_{n}(t)\in K$.
Therefore, for each $t\in\mathbb{R}$, the subset $ U_{n}(t)=\lbrace u_{n}(t): \;  n\in \mathbb{N}\rbrace $ is relatively compact in $X$. By Theorem \ref{Lemma31} and using Arzela-Ascoli's Theorem and an extraction diagonal argument applied to  the sequence $ (u_{n} )_{n} $, we  claim that there exist
$x_{\ast}\in C(\mathbb{R},X)$ and a subsequence of $(t_{n})_{n}$ such that
\begin{eqnarray} \label{ConvFormMildSol}
x_{0}(t+t_{n})\rightarrow x_{\ast}(t)\qquad\mathrm{as}\qquad n\rightarrow
+\infty,
\end{eqnarray}
uniformly on each compact subset of $\mathbb{R}$. Hence, for $n\in\mathbb{N}$ sufficiently large, we have
\begin{eqnarray} \label{MildSolEqUn}
x_{0}(t+t_{n})=T(t-s)x_{0}(s+t_{n})+\int_{s}^{t}T(t-\sigma)f(\sigma
+t_{n},x_{0}(\sigma+t_{n}))\ d\sigma, \quad t\geq s.
\end{eqnarray}
Let $ t,s\in \mathbb{R} $,  $t\geq s$. Then, 
%%%Now, we show that $x_{\ast}$ is a mild solution of
%%%equation \eqref{VEquation} with relatively compact range. Since $u$ is a $K$-mild solution of equation \eqref{Eq1SubStep}, it follows that
%\[
%u(t+t_{n})\in K,\text{ for all }t\in\mathbb{R} \qquad\eqno  (3.15)
%\]%
%\[
%\begin{eqnarray}
%u(t+t_{n})=T(t-s)u(s+t_{n})+\int_{s}^{t}T(t-\sigma)f(\sigma+t_{n}
%,u(\sigma+t_{n}))\ d\sigma\qquad\text{ for }t\geq s. \label{MildSolEqUn}
%\end{eqnarray}
using \eqref{ConvFormMildSol}, we claim that 
\begin{eqnarray}
\displaystyle \lim_{n\rightarrow+\infty}T(t-s) x_{0}(s+t_{n})=T(t-s)x_{\ast}(s). \label{LimitVStepPart1}
\end{eqnarray}
Now, we prove that 
\begin{eqnarray}
\lim_{n\rightarrow+\infty}\int_{s}^{t}T(t-\sigma)f(\sigma+t_{n},x_{0}(\sigma
+t_{n}))\ d\sigma=\int_{s}^{t}T(t-\sigma)g(\sigma,x_{\ast}(\sigma))\ d\sigma . \label{LimitVStep}
\end{eqnarray}
In fact, we have
%\begin{small}
\begin{eqnarray*}
& &\left( \int_{t}^{t+1}\parallel f(\sigma +t_{n},x_{0}(\sigma +t_{n}))-g(\sigma,x_{\ast}(\sigma))\parallel^{p} d\sigma\right)^{\frac{1}{p}} \\ &\leq &\left( \int_{t}^{t+1} \parallel f(\sigma +t_{n},x_{0}(\sigma +t_{n}))-g(\sigma,x_{0}(\sigma +t_{n}))\parallel^{p} d\sigma \right)^{\frac{1}{p}} \\ &+&\left( \int_{t}^{t+1}\ \parallel g(\sigma ,x_{0}(\sigma +t_{n}))-g(\sigma,x_{\ast}(\sigma))\parallel^{p} d\sigma\right)^{\frac{1}{p}}  \\  &\leq & \underbrace{\sup_{x\in K}\left( \int_{t}^{t+1}\parallel f(\sigma +t_{n},x)-g(\sigma ,x)\parallel^{p}d\sigma\right)^{\frac{1}{p}}}_{\textbf{(I)}}  \\ &+&\underbrace{\left( \int_{t}^{t+1}\parallel
g(\sigma ,x_{0}(\sigma +t_{n}))-g(\sigma ,x_{\ast}(\sigma))\parallel^{p}d\sigma \right)^{\frac{1}{p}}}_{\textbf{(II)}} , \; t\in \mathbb{R}.
\end{eqnarray*}
%\end{small}
Using \eqref{DefAlmAuomUSubvHalf1}, we obtain that \textbf{(I)}$\rightarrow 0  $ as $ n\rightarrow \infty $. On the other hand, by Remark \ref{RemarkContSpG3}, it follows that $g $ is $ S^{p} $-uniformly continuous. Therefore, we have \textbf{(II)}$\rightarrow 0  $ as $ n\rightarrow \infty $. Consequently, for each $t\in \r$
\begin{eqnarray}
K_{p}(t, n):=\left( \int_{t}^{t+1}\parallel f(\sigma +t_{n},x_{0}(\sigma +t_{n}))-g(\sigma,x_{\ast}(\sigma))\parallel^{p} d\sigma\right)^{\frac{1}{p}} \rightarrow 0 \text{ as } n\rightarrow \infty. \label{LimitLemma32SteV}
\end{eqnarray}
Thus, for $p \in [1, \infty)$, we have
%\begin{footnotesize}
\begin{eqnarray*}
& &\| \int_{s}^{t} T(t-\sigma)f(\sigma+t_{n},x_{0}(\sigma+t_{n}
))d\sigma -\int_{s}^{t} T(t-\sigma)g(\sigma,x_{\ast}(\sigma)) d\sigma \|  \\ 
&\leq &  \int_{s}^{t} \| T(t-\sigma)\left[ f(\sigma+t_{n},x_{0}(\sigma+t_{n}))-g(\sigma,x_{\ast}(\sigma))\right]  \| d\sigma  \\ 
&\leq & M \sum_{m=[s]}^{[t]+1} e^{|\omega |(t-m)} \int_{m}^{m+1}\parallel f(\sigma+t_{n},x_{0}(\sigma+t_{n}))-g(\sigma,x_{\ast}(\sigma)) \parallel d\sigma 
\\ &\leq & M \sum_{m=[s]}^{[t]+1} e^{|\omega |(t-m)} \left( \int_{m}^{m+1}\parallel f(\sigma+t_{n},x_{0}(\sigma+t_{n}))-g(\sigma,x_{\ast}(\sigma)) \parallel^{p} d\sigma\right)^{\frac{1}{p}} 
\\ &=& M  \sum_{m=[s]}^{[t]+1} e^{|\omega |(t-m)} k_{p}(m,n)
% \\ &\leq & M^{'} \left( t-s+3\right)^{\frac{1}{p}}  \left( \int_{s}^{t}e^{q\omega(t-\sigma)}\ d\sigma\right)^{\frac{1}{q}}K_{p}(t,n).
\end{eqnarray*}
%\end{footnotesize}
%Moreover, for $ p=1 $, we obtain
%%\begin{footnotesize}
%\begin{eqnarray*}
%& & \| \int_{s}^{t} T(t-\sigma)f(\sigma+t_{n},x_{0}(\sigma+t_{n}
%))d\sigma -\int_{s}^{t} T(t-\sigma)g(\sigma,x_{\ast}(\sigma)) d\sigma \| \leq 
%\\ &\leq & M \int_{[s]}^{[t]+1}  e^{| \omega |(t-\sigma)}\| f(\sigma+t_{n},x_{0}(\sigma+t_{n}
%))-g(\sigma,x_{\ast}(\sigma))\| d\sigma  \\ 
%&\leq & M \sum_{m=[s]}^{[t]+1}e^{| \omega |(t-m)} \int_{m}^{m+1}  \| f(\sigma+t_{n},x_{0}(\sigma+t_{n}
%))-g(\sigma,x_{\ast}(\sigma))\| d\sigma \\ 
%&\leq & M \sum_{m=[s]}^{[t]+1}e^{| \omega |(t-m)} k_{1}(m,n)
%\end{eqnarray*}
%\end{footnotesize}
Therefore, using the limit in \eqref{LimitLemma32SteV} we obtain \eqref{LimitVStep}.
Consequentely, by \eqref{LimitVStepPart1} and \eqref{LimitVStep} we deduce that
$$
x_{\ast}(t)=T(t-s)x_{\ast}(s)+\int_{s}^{t}T(t-\sigma)g(\sigma,x_{\ast}(\sigma))\ d\sigma
\qquad\text{ for }t\geq s. 
$$ 
Hence, $ x_{\ast} $ is a mild solution with a relatively compact range of the following equation 
\begin{equation}
x_{\ast}^{\prime}(t)=Ax_{\ast}(t)+g(t,x_{\ast}(t)), \quad t\in \r . \label{EquStarX}
\end{equation}
By taking the sequence $(-t_{n})_{n}$ instead of $(t_{n})_{n}$, we apply the same construction to the mild solution $ x_{\ast} $ of equation \eqref{EquStarX}. Then, we obtain the existence of a mild solution $x$ with relatively compact range of equation \eqref{Eq1SubStep}. Consequently, $x$ is a mild solution satisfying \eqref{MildSolCompRange}.  $\ \blacksquare$
\end{proof}
\subsection{Solutions with relatively compact ranges}\label{section31Pep3}
In this section, we investigate the relative compactness of the range of bounded solutions to equation \eqref{Eq1SubStep} on the right half-line. We distinguish, two cases, $p\in (1, \infty) $ and $ p=1 $.\\
To conclude our results, we use the Kuratowski measure of noncompactness  $\alpha $ of bounded subsets  $ B $ in $ X $ defined by: 
\begin{eqnarray}
\alpha(B):= \inf \lbrace \varepsilon>0: \, B\, \text{has a finite cover of balls with diameter}\, < \varepsilon  \rbrace . \label{MeaNComp}
\end{eqnarray}
The measure $ \alpha $ satisfies the following properties:\\
\textbf{(a)} $ \alpha(B)=0 \Leftrightarrow  B$ is relatively compact. \\ 
\textbf{(b)} $\alpha(B_{1}+B_{2})\leq  \alpha(B_{1})+\alpha(B_{2}) $ for all $ B_{1} $ and $ B_{2} $ bounded in $ X $.\\
\textbf{(c)} $\alpha(B(0,\varepsilon)= 2\varepsilon $ for all $ \varepsilon >0 .$\\
For more details, we refer to \cite{Lak}. \\
 \vspace*{0.3cm}\\ 
\textbf{The case $p\in (1, \infty)$.} 
\begin{lemma}\label{Lemma Relativ Comp p not 1}
Assume that \textbf{(H1)-(H3)} hold. If equation \eqref{Eq1SubStep} has at least a bounded mild solution
$x_{0}:[t_{0},+\infty) \longrightarrow X$. Then, its range $\{x_{0}(t)\ : \ t\geq t_{0}\}$ is relatively compact in $X$.\\
%\textbf{(ii)} There exists a mild solution $x$ on $\mathbb{R}$
%of equation \eqref{Eq1SubStep} such that 
%\begin{eqnarray}
%\lbrace x(t),\; t \in \mathbb{R} \rbrace \subset \overline{ \lbrace x_{0}(t),\; t\geq t_{0} \rbrace } \label{MildSolCompRange}
%\end{eqnarray}
\end{lemma}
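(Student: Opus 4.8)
The plan is to combine the smoothing effect of the compact semigroup on short time intervals with the uniform local $L^{p}$ bound \textbf{(H3)}, and to quantify the lack of compactness by the Kuratowski measure $\alpha$. Set $R:=\sup_{t\ge t_{0}}\|x_{0}(t)\|$, which is finite since $x_{0}$ is bounded, and let $k_{p}:=\sup_{t\in\r}\bigl(\int_{t}^{t+1}\sup_{\|x\|\le R}\|f(s,x)\|^{p}\,ds\bigr)^{1/p}$, which is finite by \textbf{(H3)}. Fix $M\ge 1$ and $\omega\in\r$ with $\|T(t)\|\le Me^{\omega t}$ for $t\ge 0$, and let $q$ be the conjugate exponent of $p$; since $p>1$ here, $q<\infty$, and this is precisely the place where the hypothesis $p>1$ enters.

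For $0<\delta\le 1$ and $t\ge t_{0}+\delta$, the variation of constants formula \eqref{defMildSol} over the interval $[t-\delta,t]$ gives
\[
x_{0}(t)=T(\delta)x_{0}(t-\delta)+\int_{t-\delta}^{t}T(t-\sigma)f(\sigma,x_{0}(\sigma))\,d\sigma .
\]
The first term lies in $T(\delta)B_{R}$ with $B_{R}:=\{x\in X:\|x\|\le R\}$, a relatively compact set because $T(\delta)$ is a compact operator by \textbf{(H2)}. For the integral term, using $\|T(t-\sigma)\|\le Me^{|\omega|\delta}\le Me^{|\omega|}$ for $t-\delta\le\sigma\le t$, then $\|f(\sigma,x_{0}(\sigma))\|\le\sup_{\|x\|\le R}\|f(\sigma,x)\|$, then H\"older's inequality on $[t-\delta,t]$, and finally \textbf{(H3)}, I would obtain
\[
\Bigl\|\int_{t-\delta}^{t}T(t-\sigma)f(\sigma,x_{0}(\sigma))\,d\sigma\Bigr\|\le Me^{|\omega|}\,\delta^{1/q}\,k_{p}=:\eta(\delta),
\]
and $\eta(\delta)\to 0$ as $\delta\to 0^{+}$ since $1/q>0$. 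Hence $\{x_{0}(t):t\ge t_{0}+\delta\}\subset T(\delta)B_{R}+B(0,\eta(\delta))$.

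It then remains to pass to the measure $\alpha$. By subadditivity (property \textbf{(b)}), property \textbf{(a)} applied to the relatively compact set $T(\delta)B_{R}$, and property \textbf{(c)} applied to the ball $B(0,\eta(\delta))$, one gets $\alpha\bigl(\{x_{0}(t):t\ge t_{0}+\delta\}\bigr)\le 0+2\eta(\delta)$. Writing $\{x_{0}(t):t\ge t_{0}\}=\{x_{0}(t):t_{0}\le t\le t_{0}+\delta\}\cup\{x_{0}(t):t\ge t_{0}+\delta\}$, the first piece being compact as the continuous image of a compact interval, and using that $\alpha(A\cup B)=\max\{\alpha(A),\alpha(B)\}$ (see \cite{Lak}), it follows that $\alpha\bigl(\{x_{0}(t):t\ge t_{0}\}\bigr)\le 2\eta(\delta)$ for every $\delta\in(0,1]$. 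Letting $\delta\to 0^{+}$ yields $\alpha\bigl(\{x_{0}(t):t\ge t_{0}\}\bigr)=0$, so the range is relatively compact by property \textbf{(a)}. The only genuinely delicate point is the H\"older estimate of the integral term, which needs the conjugate exponent $q$ to be finite; this is exactly why the case $p=1$ has to be handled separately (Lemma \ref{Propostion12}, Proposition \ref{Proposition H5}).
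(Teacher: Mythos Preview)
Your proof is correct and follows essentially the same approach as the paper's: decompose $x_{0}(t)$ via the variation of constants formula over a short interval of length $\delta$ (the paper uses $\varepsilon$), use compactness of $T(\delta)$ for the first term, H\"older's inequality together with \textbf{(H3)} to bound the integral term by a quantity $\eta(\delta)\sim\delta^{1/q}\to 0$, and conclude with the Kuratowski measure of noncompactness. The only cosmetic differences are that the paper splits off the initial interval $[t_{0},t_{0}+1]$ rather than $[t_{0},t_{0}+\delta]$, and is slightly less explicit about the Minkowski-sum inclusion and the rule $\alpha(A\cup B)=\max\{\alpha(A),\alpha(B)\}$; your write-up is in fact a bit cleaner on those points.
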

\begin{proof} 
\textbf{(i)} For $ 0<\varepsilon <1$,  define $$ x_{0}(t)=T(\varepsilon)x_{0}(t-\varepsilon) +\int_{t-\varepsilon}^{t}T(t-\sigma)f(\sigma,x_{0}(\sigma))\ d\sigma
\qquad\text{ for }t> t_{0}+1.   $$
Let $ t>t_{0}+1 $. Then, we have
\begin{eqnarray}
T(\varepsilon)x_{0}(t-\varepsilon) \in T(\varepsilon)  \overline{B}(0,M_{0}) \label{CompacityX01}
\end{eqnarray}
where $ M_{0}:=\sup_{t\geq t_{0}}\parallel x_{0}(t)\parallel<+\infty  $ and $ \overline{B}(0,M_{0}) $ is the closed ball of center $ 0 $ and radius $ M_{0}$.\\
%The case $ t=t_{0} $ is obvious. Now, for $ t>t_{0} $,  we have 
%\begin{eqnarray}
%T(t-t_{0})x_{0}(t_{0}) \in T(t-t_{0})  \overline{B}(0,M_{0}) \label{CompacityX01}
%\end{eqnarray}
%where $ M_{0}:=\sup_{t\geq t_{0}}\parallel x_{0}(t)\parallel<+\infty  $ and $ \overline{B}(0,M_{0}) $ is the closed ball of center $ 0 $ and radius $ M_{0}$.\\
%Choose $ 0<\varepsilon <1$ such that $ t>t_{0} +\varepsilon $. Then, we have \\
%\begin{eqnarray}
%\int_{t_{0}}^{t}T(t-\sigma)f(\sigma,x_{0}(\sigma))\ d\sigma &=& T(\varepsilon) \int_{t_{0}}^{t-\varepsilon}T(t-\sigma -\varepsilon)f(\sigma,x_{0}(\sigma))\ d\sigma+\int_{t-\varepsilon}^{t}T(t-\sigma)f(\sigma,x_{0}(\sigma))\ d\sigma. 
%\end{eqnarray}
%Here, we discuss two cases:\\
%\textbf{The case $ p\in (1,\infty) $.} 
Using H\"{o}lder inequality, we obtain 
 \begin{eqnarray*}
\| \int_{t-\varepsilon}^{t}T(t-\sigma)f(\sigma,x_{0}(\sigma))\ d\sigma \| \leq Me^{\mid \omega \mid \varepsilon} \ \varepsilon ^{\frac{1}{q}}\left( \int_{t-\varepsilon}^{t}\| f(\sigma,x_{0}(\sigma))\|^{p} d\sigma \right)^{\frac{1}{p}} \leq Me^{\mid \omega \mid \varepsilon} \ \varepsilon ^{\frac{1}{q}} \ \tilde{k}_{p}:= \delta(\varepsilon),
\end{eqnarray*}
where $ \tilde{k}_{p}= \displaystyle \sup_{t\in \mathbb{R}}   \left( \int_{t-1}^{t}\sup_{x\in  \overline{B}(0,M_{0}) } \| f(\sigma,x)\|^{p} d\sigma \right)^{\frac{1}{p}} $.\\
Hence, $$   \int_{t-\varepsilon}^{t}T(t-\sigma)f(\sigma,x_{0}(\sigma))\ d\sigma \in  \overline{B}(0,\delta(\varepsilon)) . $$
Consequently, 
$$ \lbrace x_{0}(t):\; t\geq t_{0} \rbrace \subset  \lbrace x_{0}(t):\; t\in  [t_{0}, t_{0}+1] \rbrace \cup T(\varepsilon)  \overline{B}(0,M_{0}) \cup \overline{B}(0,\delta(\varepsilon)) . $$
Thus, by the continuity of $  x_{0} $, the set $\lbrace x_{0}(t):\; t\in  [t_{0}, t_{0}+1] \rbrace $ is compact. Hence $$ \alpha (\lbrace x_{0}(t):\; t\in  [t_{0}, t_{0}+1] \rbrace )=0. $$Therefore, from hypothesis \textbf{(H2)}, we claim that $$ \alpha (\lbrace x_{0}(t):\; t\geq t_{0}\rbrace )\leq 2\delta(\varepsilon)\; \text{ for all } \; \varepsilon>0 .$$
Since  $ \delta(\varepsilon)\rightarrow 0 $ as $ \varepsilon \rightarrow 0 $.  It follows that $$ \alpha \left( \lbrace x_{0}(t):\; t\geq t_{0}\rbrace\right)  =0 .$$ 
\begin{flushright}
$\ \blacksquare$
\end{flushright}
\end{proof}
\textbf{The case $p=1$.}\\
In this case, the following Lemmas are needed.
\begin{lemma}\label{Lemma1CaseP1}
Let $ \rho \in AAS^{1}(\r,X) $. Then, for all $0<\varepsilon <1$, the following function:
\begin{eqnarray}
x_{ \rho}^{\varepsilon}(t):= \int_{t-\varepsilon}^{t} T(t-s) \rho(s) ds \quad \text{ for } \; t\geq t_{0}, \; t_{0}\in \r
\end{eqnarray}
has a relatively compact range on $ [t_{0}, \infty) .$
\end{lemma}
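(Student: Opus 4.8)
The plan is to show directly that every sequence in the range of $x_{\rho}^{\varepsilon}$ on $[t_{0},\infty)$ has a convergent subsequence; since $X$ is a Banach space this is exactly relative compactness. First I would rewrite the defining integral through the change of variables $u=t-s$,
\[
x_{\rho}^{\varepsilon}(t)=\int_{t-\varepsilon}^{t}T(t-s)\rho(s)\,ds=\int_{0}^{\varepsilon}T(u)\,\rho(t-u)\,du\qquad(t\geq t_{0}),
\]
which is a well-defined Bochner integral: by \textbf{(H1)} one has $\|T(u)\|\leq Me^{\omega u}\leq Me^{|\omega|\varepsilon}=:\widetilde{M}$ for $u\in[0,\varepsilon]$, and $\rho\in L^{1}_{loc}(\mathbb{R},X)$, so the integrand is strongly measurable with integrable norm.

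Next, fix an arbitrary sequence $(t_{n})_{n}\subset[t_{0},\infty)$. Since $\rho\in AAS^{1}(\mathbb{R},X)$, Definition \ref{DefAlmAut1} (with $p=1$) applied to $(t_{n})_{n}$ provides a subsequence $(s_{n})_{n}\subset(t_{n})_{n}$ and $g\in L^{1}_{loc}(\mathbb{R},X)$ such that $\int_{t}^{t+1}\|\rho(s+s_{n})-g(s)\|\,ds\to0$ for every $t\in\mathbb{R}$. Choosing $t=-1$ and using $[-\varepsilon,0]\subseteq[-1,0]$ (recall $0<\varepsilon<1$), then substituting $u=-s$, this yields $\int_{0}^{\varepsilon}\|\rho(s_{n}-u)-g(-u)\|\,du\to0$. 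Put $\ell:=\int_{0}^{\varepsilon}T(u)g(-u)\,du\in X$, which is again a legitimate Bochner integral since $\int_{0}^{\varepsilon}\|g(-u)\|\,du=\int_{-\varepsilon}^{0}\|g(s)\|\,ds<\infty$. Then
\[
\left\| x_{\rho}^{\varepsilon}(s_{n})-\ell\right\|=\left\|\int_{0}^{\varepsilon}T(u)\bigl(\rho(s_{n}-u)-g(-u)\bigr)\,du\right\|\leq\widetilde{M}\int_{0}^{\varepsilon}\|\rho(s_{n}-u)-g(-u)\|\,du\longrightarrow0,
\]
so $x_{\rho}^{\varepsilon}(s_{n})\to\ell$. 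As $(t_{n})_{n}$ was arbitrary, the range $\{x_{\rho}^{\varepsilon}(t):\,t\geq t_{0}\}$ is relatively compact.

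The argument is elementary and, notably, uses only the exponential bound coming from \textbf{(H1)} --- not even the compactness hypothesis \textbf{(H2)}. The delicate point is the localization: the $S^{1}$-almost automorphic convergence must be restricted to the length-$\varepsilon$ window before the semigroup bound is applied, and it is precisely the almost automorphy of $\rho$ (and not mere $S^{1}$-boundedness) that makes this possible --- this is what will separate the present lemma from the $p=1$ situation discussed after Lemma \ref{Lemma Relativ Comp p not 1}. Should one prefer an argument in the style of Lemma \ref{Lemma Relativ Comp p not 1} via the measure of noncompactness $\alpha$, one can instead split, for $0<\eta<\varepsilon$, $x_{\rho}^{\varepsilon}(t)=T(\eta)\int_{t-\varepsilon}^{t-\eta}T(t-s-\eta)\rho(s)\,ds+\int_{t-\eta}^{t}T(t-s)\rho(s)\,ds$; the first summand lies in $T(\eta)\overline{B}(0,\widetilde{M}\|\rho\|_{BS^{1}})$, which is relatively compact by \textbf{(H2)}, and the second in $\overline{B}(0,\widetilde{M}\delta_{\eta})$ with $\delta_{\eta}:=\sup_{t\geq t_{0}}\int_{t-\eta}^{t}\|\rho(s)\|\,ds$, whence $\alpha(\{x_{\rho}^{\varepsilon}(t):t\geq t_{0}\})\leq2\widetilde{M}\delta_{\eta}$. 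In that route the main obstacle is to prove $\delta_{\eta}\to0$ as $\eta\to0^{+}$, which again follows from $\rho\in AAS^{1}$ together with the absolute continuity of the Lebesgue integral.
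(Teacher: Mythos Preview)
Your proof is correct and follows essentially the same route as the paper: rewrite $x_{\rho}^{\varepsilon}(t)=\int_{0}^{\varepsilon}T(\sigma)\rho(t-\sigma)\,d\sigma$, use the $S^{1}$-almost automorphy of $\rho$ to extract from any sequence a subsequence $(s_{n})_{n}$ along which $\rho(s_{n}-\cdot)\to g(-\cdot)$ in $L^{1}([0,\varepsilon],X)$, and conclude via the uniform bound $\|T(\sigma)\|\leq Me^{|\omega|}$ that $x_{\rho}^{\varepsilon}(s_{n})\to\int_{0}^{\varepsilon}T(\sigma)g(-\sigma)\,d\sigma$. Your presentation is in fact slightly more explicit about the localization to $[-\varepsilon,0]\subset[-1,0]$, and your closing remark on the alternative $\alpha$-based argument (and its genuine obstacle $\delta_{\eta}\to0$) is a nice addition not present in the paper.
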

\begin{proof}
Let $0<\varepsilon <1$ and $ (t'_{n})_{n} \subset \r$ be a sequence. Since $  \rho \in AAS^{1}(\r,X) $ it follows that, there exist a subsequence $ (t_{n})_{n} \subset (t'_{n})_{n} $ and $ g \in L^{1}_{loc}(\r,X) $ such that for each $ t \in \r $
\begin{eqnarray}
  \int_{t}^{t+1}\|  \rho(t_{n}+s)-g(s)\|\ ds  \rightarrow 0 \; \text{ as } n\rightarrow \infty.\label{ProfLemP1Pap3}
\end{eqnarray}
Let $ y_{ \rho}^{\varepsilon}:=\displaystyle \int_{0}^{\varepsilon} T(\sigma)g(-\sigma) d \sigma \in X$. Then, 
\begin{eqnarray*}
\| x_{ \rho}^{\varepsilon}(t_{n})- y_{ \rho}^{\varepsilon}\| &=&\| \int_{t_{n}}^{t_{n}-\varepsilon} T(t_{n}-s)   \rho(s ) ds- \int_{0}^{\varepsilon} T(\sigma)g(-\sigma) d \sigma\| \\ &=& \| \int_{0}^{\varepsilon} T(\sigma) \left[   \rho(t_{n}-\sigma ) -g(- \sigma) \right]  d \sigma \| \\ &\leq & M e^{|\omega |} \int_{0}^{1} \|  \rho(t_{n}-\sigma ) -g(-\sigma)  \| d \sigma .
\end{eqnarray*}
Using \eqref{ProfLemP1Pap3},  we obtain that $$ \int_{0}^{1} \|  \rho(t_{n}-\sigma ) -g(-\sigma)  \| ds   \rightarrow 0 \; \text{ as } n\rightarrow \infty , $$
independently of $ t $ and $\varepsilon$. Therefore 
$$ x_{ \rho}^{\varepsilon}(t_{n})\rightarrow  y_{ \rho}^{\varepsilon} \; \text{ as } n\rightarrow \infty. $$
Consequently, $ x_{ \rho}^{\varepsilon} $ has a relatively compact range on $ [t_{0}, \infty) .$ $\ \blacksquare$
\end{proof}
\begin{lemma}\label{Propostion12}
Let $ f: \r \times X \longrightarrow X $ satisfy \textbf{(H4)} with $ \phi, \ \psi \in AAS^{1}(\r,X) $. Assume that \textbf{(H1)-(H2)} hold. If equation \eqref{Eq1SubStep} has at least a bounded mild solution
$x_{0}: [t_{0} ,+\infty) \longrightarrow X$, then $ x_{0} $ has a relatively compact range.  
%\textbf{(ii)} There exists a mild solution $x$ on $\mathbb{R}$
%of equation \eqref{Eq1SubStep} such that 
%\begin{eqnarray}
%\lbrace x(t),\; t \in \mathbb{R} \rbrace \subset \overline{ \lbrace x_{0}(t),\; t\geq t_{0} \rbrace } \label{MildSolCompRange}
%\end{eqnarray}
\end{lemma}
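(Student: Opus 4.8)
The plan is to decompose the mild solution $x_0$ via the variation of constants formula over a short window of length $\varepsilon$ and treat the linear and the forcing parts separately, exactly as in the case $p\in(1,\infty)$, but replacing the Hölder estimate (which fails for $p=1$) by the explicit compactness statement of Lemma \ref{Lemma1CaseP1}. First I would write, for $0<\varepsilon<1$ and $t>t_0+1$,
$$
x_0(t)=T(\varepsilon)x_0(t-\varepsilon)+\int_{t-\varepsilon}^{t}T(t-\sigma)f(\sigma,x_0(\sigma))\,d\sigma .
$$
Using \textbf{(H4)}, $f(\sigma,x_0(\sigma))=\varphi(\sigma)g(x_0(\sigma))+\psi(\sigma)$. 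Since $x_0$ is bounded and $g$ maps bounded sets to bounded sets, and since $\varphi\in L^\infty(\r,\r)$, the function $\sigma\mapsto\varphi(\sigma)g(x_0(\sigma))$ is a bounded measurable $X$-valued function on $[t_0,\infty)$; call its sup-bound $N$. Hence
$$
\Bigl\|\int_{t-\varepsilon}^{t}T(t-\sigma)\varphi(\sigma)g(x_0(\sigma))\,d\sigma\Bigr\|\le M e^{|\omega|\varepsilon}N\varepsilon=:\delta_1(\varepsilon)\to0\quad\text{as }\varepsilon\to0 .
$$

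Next I would handle the $\psi$-term. Here I cannot bound it pointwise, but Lemma \ref{Lemma1CaseP1}, applied with $\rho=\psi\in AAS^1(\r,X)$, says exactly that $t\mapsto x_\psi^\varepsilon(t):=\int_{t-\varepsilon}^{t}T(t-\sigma)\psi(\sigma)\,d\sigma$ has relatively compact range on $[t_0,\infty)$, i.e. $\alpha(\{x_\psi^\varepsilon(t):t>t_0+1\})=0$. Combining the three pieces, for every $t>t_0+1$,
$$
x_0(t)\in T(\varepsilon)\overline{B}(0,M_0)\;+\;\overline{B}(0,\delta_1(\varepsilon))\;+\;\{x_\psi^\varepsilon(t):t>t_0+1\},
$$
where $M_0=\sup_{t\ge t_0}\|x_0(t)\|<\infty$. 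By subadditivity of the Kuratowski measure $\alpha$, by hypothesis \textbf{(H2)} (so $\alpha(T(\varepsilon)\overline{B}(0,M_0))=0$ since $T(\varepsilon)$ is compact), by property \textbf{(c)} (so $\alpha(\overline{B}(0,\delta_1(\varepsilon)))\le 2\delta_1(\varepsilon)$), and by Lemma \ref{Lemma1CaseP1} (the last term contributes $0$), we get
$$
\alpha\bigl(\{x_0(t):t>t_0+1\}\bigr)\le 2\delta_1(\varepsilon)\qquad\text{for all }0<\varepsilon<1 .
$$
Letting $\varepsilon\to0$ yields $\alpha(\{x_0(t):t>t_0+1\})=0$; adding the compact set $\{x_0(t):t_0\le t\le t_0+1\}$ (compact by continuity of $x_0$) and using subadditivity again gives $\alpha(\{x_0(t):t\ge t_0\})=0$, i.e. the range of $x_0$ is relatively compact, which is the claim.

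The only real subtlety — and the place where the argument genuinely differs from the case $p>1$ — is the treatment of the $\psi$-term: when $p=1$ one cannot absorb $\int_{t-\varepsilon}^t\|\psi(\sigma)\|\,d\sigma$ into a factor tending to $0$ with $\varepsilon$ (the $S^1$-norm does not control small-interval integrals uniformly in that way), so I would not try to make that term small but instead make it compact, which is precisely the content of Lemma \ref{Lemma1CaseP1}; this is why the splitting $f=\varphi g+\psi$ in \textbf{(H4)} is essential here, the bounded factor $\varphi\in L^\infty$ being what lets the $g$-part be estimated as $\delta_1(\varepsilon)\to0$. A minor bookkeeping point is that $\alpha(T(\varepsilon)\overline{B}(0,M_0))=0$ requires only $\varepsilon>0$ fixed and \textbf{(H2)}, and that one must keep $\varepsilon$ uniform over $t$ throughout, which it is, since $\delta_1(\varepsilon)$ and the conclusion of Lemma \ref{Lemma1CaseP1} are both independent of $t$.
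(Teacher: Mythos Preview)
Your proof is correct and follows essentially the same approach as the paper: the identical variation-of-constants decomposition over a window of length $\varepsilon$, the same splitting $f=\varphi g+\psi$ from \textbf{(H4)} with the $\varphi g$-part estimated by $O(\varepsilon)$ and the $\psi$-part handled via Lemma~\ref{Lemma1CaseP1}, and the conclusion via subadditivity of $\alpha$ and \textbf{(H2)}. Your write-up is in fact slightly more explicit than the paper's in handling the initial compact piece $\{x_0(t):t_0\le t\le t_0+1\}$ and in articulating why the $p=1$ case forces compactness (rather than smallness) for the $\psi$-term.
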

\begin{proof} 
Let $ 0<\varepsilon <1$. Define $$ x_{0}(t)=T(\varepsilon)x_{0}(t-\varepsilon) +\int_{t-\varepsilon}^{t}T(t-\sigma)f(\sigma,x_{0}(\sigma))\ d\sigma
\qquad\text{ for }t> t_{0}+1.   $$
Let $ t>t_{0}+1 $. Then, we have
\begin{eqnarray}
T(\varepsilon)x_{0}(t-\varepsilon) \in T(\varepsilon)  \overline{B}(0,M_{0}) \label{CompacityX01}
\end{eqnarray}
where $ M_{0}:=\sup_{t\geq t_{0}}\parallel x_{0}(t)\parallel<+\infty  $ and $ \overline{B}(0,M_{0}) $ is the closed ball of center $ 0 $ and radius $ M_{0}$.\\
It suffices to prove that the measure of noncompactness $\alpha$ of $ \displaystyle \int_{t-\varepsilon}^{t}T(t-\sigma)f(\sigma,x_{0}(\sigma))\ d\sigma $ equals zero. In fact, by assumption on $f$, we obtain that
$$   \int_{t-\varepsilon}^{t}T(t-\sigma)f(\sigma,x_{0}(\sigma))\ d\sigma= \int_{t-\varepsilon}^{t}T(t-\sigma) \phi(\sigma) g(x_{0}(\sigma))\ d\sigma +\int_{t-\varepsilon}^{t}T(t-\sigma) \psi(\sigma) \ d\sigma  $$
\begin{eqnarray*}
\|\int_{t-\varepsilon}^{t}T(t-\sigma) \phi(\sigma) g(x_{0}(\sigma))\ d\sigma \|  \leq  Me^{|\omega | \varepsilon} | \phi|_{\infty}  \sup_{x\in  \overline{B}(0,M_{0}) }\| g(x)\| \ \varepsilon :=\gamma(\varepsilon) .  
\end{eqnarray*}
Hence, $$ \int_{t-\varepsilon}^{t}T(t-\sigma)f(\sigma ,x_{0}(\sigma))\ d\sigma \in \overline{B}(0, \gamma(\varepsilon) )+ \lbrace x^{\varepsilon}_{\psi}(t): \ t> t_{0}+1 \rbrace .$$
By Lemma \ref{Lemma1CaseP1} and \textbf{(H2)}, it follows that  $$ \alpha (\lbrace x_{0}(t):\; t\geq t_{0}\rbrace  ) \leq 2 \gamma( \varepsilon).$$
Since $\gamma( \varepsilon) \rightarrow 0$ as $ \varepsilon \rightarrow 0 $. We claim that 
$$ \alpha (\lbrace x_{0}(t):\; t\geq t_{0}\rbrace  ) =0.$$ 
\begin{flushright}
$\ \blacksquare$
\end{flushright}

\end{proof}
In general when $f$ does not satisfy \textbf{(H4)}, Lemma \ref{Propostion12} cannot hold. However, under the additional assumption \textbf{(H5)} below, we have the following main result which is of independent interest.\\

\textbf{(H5)} For all bounded subset $B \subset X$, the set $\Gamma := \lbrace f(t,x): \; (t,x) \in \r \times X \rbrace$ is relatively compact in $X$.\\
\begin{proposition}\label{Proposition H5}
Let $ f:\r \times X\longrightarrow X $ such that $ f(\cdot, x) \in BS^{1}(\r,X) $ for each $x\in X$. Assume that \textbf{(H1)}-\textbf{(H2)} and \textbf{(H5)} are satisfied. If equation \eqref{Eq1SubStep} has at least a bounded mild solution $x_{0}: [t_{0} ,+\infty) \longrightarrow X$, then, its range is relatively compact in $X$.
\end{proposition}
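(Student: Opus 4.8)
The plan is to mimic the proof of Lemma \ref{Lemma Relativ Comp p not 1} and Lemma \ref{Propostion12}, using the splitting
$$
x_{0}(t)=T(\varepsilon)x_{0}(t-\varepsilon)+\int_{t-\varepsilon}^{t}T(t-\sigma)f(\sigma,x_{0}(\sigma))\ d\sigma\qquad\text{ for }t>t_{0}+1,
$$
valid for any fixed $0<\varepsilon<1$. Since $(x_{0}(t))_{t\geq t_{0}}$ is bounded, say by $M_{0}=\sup_{t\geq t_{0}}\|x_{0}(t)\|$, the first term lies in the relatively compact set $T(\varepsilon)\overline{B}(0,M_{0})$ by \textbf{(H2)}. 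The continuity of $x_{0}$ makes $\{x_{0}(t):t\in[t_{0},t_{0}+1]\}$ compact, so it suffices, as before, to estimate $\alpha$ of the integral term and let $\varepsilon\to0$.

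First I would control the integral term. By \textbf{(H5)}, applied to the bounded set $B=\overline{B}(0,M_{0})$ which contains the range of $x_{0}$, the set $\Gamma=\overline{\{f(\sigma,x_{0}(\sigma)):\sigma\in\r\}}$ is compact; in particular $C:=\sup_{\sigma}\|f(\sigma,x_{0}(\sigma))\|<\infty$ (this also follows from $f(\cdot,x)\in BS^{1}$, but compactness is what we really use). Let $\text{co}(\Gamma)$ be the closed convex hull of $\Gamma$, which is compact by Mazur's theorem. For $t>t_{0}+1$ and $0<\varepsilon<1$,
$$
\int_{t-\varepsilon}^{t}T(t-\sigma)f(\sigma,x_{0}(\sigma))\ d\sigma=\int_{0}^{\varepsilon}T(\tau)f(t-\tau,x_{0}(t-\tau))\ d\tau,
$$
and since $\|T(\tau)\|\leq Me^{|\omega|}$ for $\tau\in[0,1]$, this Bochner integral is a limit of Riemann sums $\sum_{j}T(\tau_{j})f(t-\tau_{j},x_{0}(t-\tau_{j}))\,\Delta\tau_{j}$, each of which lies in $\varepsilon\cdot\overline{\text{co}}\,\bigl(\{T(\tau)y:\tau\in[0,\varepsilon],\ y\in\Gamma\}\bigr)$. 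The set $S_{\varepsilon}:=\{T(\tau)y:\tau\in[0,\varepsilon],\ y\in\Gamma\}$ is compact (continuous image of the compact set $[0,\varepsilon]\times\Gamma$ under $(\tau,y)\mapsto T(\tau)y$, using strong continuity of the semigroup and the Banach--Steinhaus argument of Lemma \ref{lem: Banach Steinhaus} for uniformity over the compact $\Gamma$), hence $\overline{\text{co}}\,(S_{\varepsilon})$ is compact. Therefore the integral term belongs to the compact set $\varepsilon\cdot\overline{\text{co}}\,(S_{1})$, whose diameter is at most $\varepsilon\cdot 2Me^{|\omega|}C$.

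Combining, we get
$$
\{x_{0}(t):t\geq t_{0}\}\subset\{x_{0}(t):t\in[t_{0},t_{0}+1]\}\cup\Bigl(T(\varepsilon)\overline{B}(0,M_{0})+\varepsilon\cdot\overline{\text{co}}\,(S_{1})\Bigr),
$$
so by subadditivity of $\alpha$, by $\alpha$ vanishing on relatively compact sets, and by the diameter bound,
$$
\alpha\bigl(\{x_{0}(t):t\geq t_{0}\}\bigr)\leq\alpha\bigl(T(\varepsilon)\overline{B}(0,M_{0})\bigr)+\alpha\bigl(\varepsilon\cdot\overline{\text{co}}\,(S_{1})\bigr)=0+0=0,
$$
since both sets on the right are relatively compact for each fixed $\varepsilon$. (Alternatively, if one prefers to keep the non-compactness of $T(\varepsilon)\overline{B}(0,M_{0})$ explicit as in the proofs above, replace $\varepsilon\cdot\overline{\text{co}}\,(S_{1})$ by a ball $\overline{B}(0,\delta(\varepsilon))$ with $\delta(\varepsilon)=\varepsilon Me^{|\omega|}C\to0$ and conclude $\alpha\leq 2\delta(\varepsilon)$ then let $\varepsilon\to 0$.) By property \textbf{(a)} of the Kuratowski measure, $\{x_{0}(t):t\geq t_{0}\}$ is relatively compact in $X$. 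The main obstacle I anticipate is the careful justification that the vector-valued integral lands in a compact set — specifically, establishing the compactness of $S_{\varepsilon}=\{T(\tau)y:\tau\in[0,\varepsilon],y\in\Gamma\}$, which needs both the strong continuity of the semigroup and uniformity over the compact set $\Gamma$ (Lemma \ref{lem: Banach Steinhaus}), together with the use of Mazur's theorem to pass to the closed convex hull; the remaining estimates are routine.
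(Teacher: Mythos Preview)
Your proof is correct and takes a genuinely different route from the paper's. The paper argues sequentially: given any sequence $(t'_n)$, it invokes \textbf{(H5)} to extract a subsequence $(t_n)$ along which $f(t_n-\tau,x_0(t_n-\tau))\to g(\tau)$ for each $\tau$, and then uses dominated convergence to show $x_0^{\varepsilon}(t_n):=\int_0^{\varepsilon}T(\tau)f(t_n-\tau,x_0(t_n-\tau))\,d\tau$ converges to $\int_0^{\varepsilon}T(\tau)g(\tau)\,d\tau$, so $\{x_0^{\varepsilon}(t):t>t_0+1\}$ is relatively compact. You instead observe that, by \textbf{(H5)}, the integrand $T(\tau)f(t-\tau,x_0(t-\tau))$ takes values in the compact set $S_{\varepsilon}$ (continuous image of $[0,\varepsilon]\times\Gamma$), so the Bochner integral lands directly in the compact set $\varepsilon\cdot\overline{\mathrm{co}}(S_{\varepsilon})$ via Mazur's theorem. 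Your approach is more elementary and sidesteps a delicate point in the paper's argument---the extraction of a \emph{single} subsequence that yields pointwise (or a.e.) convergence simultaneously for all $\tau\in[0,\varepsilon]$---while also making transparent that no limit $\varepsilon\to0$ is needed: both summands are already relatively compact for each fixed $\varepsilon>0$. The paper's sequential template, by contrast, mirrors Lemma~\ref{Lemma1CaseP1} and would remain the natural route under weaker, purely sequential compactness hypotheses on $t\mapsto f(t,x_0(t))$.
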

\begin{proof} 
Let $ 0<\varepsilon <1$. The mild solution $ x_{0} $ satisfies the following:  $$ x_{0}(t)=T(\varepsilon)x_{0}(t-\varepsilon) +\int_{t-\varepsilon}^{t}T(t-\sigma)f(\sigma,x_{0}(\sigma))\ d\sigma \qquad\text{ for }t \geq  t_{0}.   $$
Let $ \displaystyle x^{\varepsilon}_{0}(t):=\int_{t-\varepsilon }^{t} T(t-\sigma) f(\sigma, x_{0}(\sigma)) \ d\sigma . $ As in the proof of Lemma \ref{Propostion12}, it remins to prove that, the set $$\displaystyle  \lbrace x^{\varepsilon}_{0}(t): \; t >  t_{0}+1 \rbrace $$ is relatively compact in $X$. In fact, let $ (t'_{n})_{n} \subset \r$ and for each  $ t\in \r $, we define the sequence  $ (f(t'_{n}-t,x_{0}(t'_{n}-t)))_{n}$. Using \textbf{(H5)}, there exist as subsequence $  (t_{n})_{n} \subset (t'_{n})_{n} $ and $g: \r\longrightarrow X$ measurable such that for each $ t\in \r $ 
\begin{eqnarray}
f(t_{n}-t,x_{0}(t_{n}-t)) \rightarrow g(t) \; \text{as} \; n\rightarrow \infty. \label{CompCondProp1}
\end{eqnarray}
%%%%%%%%%%%%%%%%%%%%%%%%%
%%%%%%%%%%%%%%%%%%%%%%%%%
Given $ \displaystyle y^{\varepsilon}_{0}:=\int_{0 }^{\varepsilon} T(\sigma) g(\sigma) \ d\sigma . $ Then
 \begin{eqnarray*}
\| x^{\varepsilon}_{0}(t_{n})- y^{\varepsilon}_{0}\| & = &\| \int_{t_{n}}^{t_{n}-\varepsilon} T(t_{n}-s)  f(s,x_{0}(s))  ds- \int_{0 }^{\varepsilon} T(\sigma) g(\sigma) \ d\sigma \| \\ & \leq & M e^{|\omega | \varepsilon} \int_{0}^{\varepsilon} \|  f(t_{n}-\sigma,x_{0}(t_{n}-\sigma))  -g( \sigma)\|  d \sigma .
\end{eqnarray*}
Hence, using \eqref{CompCondProp1}, it follows in view of dominated convergence Theorem that   $$ \int_{0}^{\varepsilon} \|  f(t_{n}-\sigma,x_{0}(t_{n}-\sigma))  -g( \sigma)\|  d \sigma  \rightarrow 0 \, \text{as} \; n \rightarrow \infty ,$$ which implies that $$ \| x^{\varepsilon}_{0}(t_{n})- y^{\varepsilon}_{0}\|  \rightarrow 0  \, \text{as} \; n \rightarrow \infty . $$
Therefore,  $$ \alpha( \lbrace x^{\varepsilon}_{0}(t): \; t >  t_{0}+1 \rbrace ) . $$  
%where $ O(\varepsilon) \rightarrow 0 $ as $ \varepsilon \rightarrow 0 $. Hence, combinning with \eqref{CompacityX01},  we obtain that 
%$$ \lbrace x_{0}(t),\; t\geq t_{0} \rbrace \subset  \lbrace x_{0}(t),\; t\in  [t_{0}, t_{0}+1] \rbrace \cup T(\varepsilon)  \overline{B}(0,M_{0}) \cup \overline{B}(0,O(\varepsilon)) . $$
Consequently, using hypothesis \textbf{(H2)}, we obtain that
$$ \alpha (\lbrace x_{0}(t):\; t\geq t_{0}\rbrace )=0.$$ 
\begin{flushright}
$\ \blacksquare$
\end{flushright}
\end{proof} 
%\begin{remark}
%For $p=1$, hypothesis \textbf{(H5)} implies \textbf{(H4)}. 
%\end{remark}
\subsection{Almost automorphic solutions minimizing a subvariant functional}\label{section32Pep3}
In the case $p\in (1,\infty)$, we have the following main Theorem.
\begin{theorem} \label{Main Theorem 1}
Let $ f  \in AAS^{p}U(\mathbb{R}\times X, X)$ and $K$ be a compact subset of $X$.  Assume that \textbf{(H1)-(H3)} are satisfied. Moreover, if equation \eqref{Eq1SubStep} has at least a mild solution $x_{0}$ defined and bounded on $[t_{0},+\infty)$. Then, the following hold:\\

\textbf{(i)}  If $\lambda_{K}$ is a subvariant functional associated to the compact set $K$, then equation \eqref{Eq1SubStep} has at least a minimal $K$-valued solution $x$. \\ 

\textbf{(ii)} If equation\eqref{Eq1SubStep} has a unique minimal $K$-valued solution $x$, then $x$ is compact almost automorphic.
\end{theorem}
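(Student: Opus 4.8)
The plan is to combine the relative-compactness result of Lemma \ref{Lemma Relativ Comp p not 1} with the reduction to a mild solution on all of $\mathbb{R}$ from Lemma \ref{Propostion1}, and then run the classical Favard/Fink subvariant-functional argument on the compact set $K$. First, since $x_{0}$ is bounded on $[t_{0},+\infty)$ and \textbf{(H1)-(H3)} hold, Lemma \ref{Lemma Relativ Comp p not 1} gives that $\{x_{0}(t):\,t\geq t_{0}\}$ is relatively compact; then Lemma \ref{Propostion1} produces a mild solution $x$ on $\mathbb{R}$ with $\{x(t):\,t\in\mathbb{R}\}\subset\overline{\{x_{0}(t):\,t\geq t_{0}\}}$, hence with relatively compact range. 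Thus $\mathcal{F}_{K}\neq\emptyset$ for $K=\overline{\{x_{0}(t):\,t\geq t_{0}\}}$, which is the compact set we work with (note the statement's ``$K$ be a compact subset of $X$'' should be read as this particular $K$, or one containing it).

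For part \textbf{(i)}, set $m=\inf_{x\in\mathcal{F}_{K}}\lambda_{K}(x)$, which is finite since $\lambda_{K}$ is real-valued on $C_{K}(\mathbb{R},X)\supset\mathcal{F}_{K}$ and $\mathcal{F}_{K}$ is nonempty. Take a minimizing sequence $(x_{n})_{n}\subset\mathcal{F}_{K}$ with $\lambda_{K}(x_{n})\to m$. Each $x_{n}$ has range in the compact set $K$, and by Theorem \ref{Lemma31} each $x_{n}$ is uniformly continuous; moreover the uniform continuity is \emph{equi}-uniform because the modulus of continuity estimate in the proof of Theorem \ref{Lemma31} (the case $p\in(1,\infty)$) depends only on $K$, $M$, $\omega$, $k_{p}$ and $p$, not on the particular solution. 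Hence $(x_{n})_{n}$ is equicontinuous and pointwise relatively compact, so by Arzelà–Ascoli and a diagonal extraction there is a subsequence converging uniformly on compact subsets of $\mathbb{R}$ to some $y\in C_{K}(\mathbb{R},X)$. Passing to the limit in the variation-of-constants formula (using the strong continuity of $T$ and, for the integral term, the dominated convergence / uniform-on-compacta convergence together with the continuity of $f(\cdot,\cdot)$ on $\mathbb{R}\times K$ — here one uses that $f\in AAS^{p}U$ restricted to the compact range behaves well; this is exactly the kind of limit passage already carried out in the proof of Lemma \ref{Propostion1}) shows $y\in\mathcal{F}_{K}$. Finally, lower semicontinuity of $\lambda_{K}$ for compact convergence gives $\lambda_{K}(y)\leq\liminf_{n}\lambda_{K}(x_{n})=m$, while $y\in\mathcal{F}_{K}$ forces $\lambda_{K}(y)\geq m$; hence $\lambda_{K}(y)=m$ and $y=:x$ is a minimal $K$-valued solution.

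For part \textbf{(ii)}, assume the minimal $K$-valued solution $x$ is unique. To show $x$ is compact almost automorphic, by Proposition \ref{EssebbarCharac} it suffices to show $x\in AA(\mathbb{R},X)$, since $x$ is already uniformly continuous by Theorem \ref{Lemma31}. Let $(\sigma_{n})_{n}$ be a sequence of reals. Since $f\in AAS^{p}U(\mathbb{R}\times X,X)$, extract a subsequence $(s_{n})_{n}$ realizing the Stepanov almost automorphy of $f$ (uniformly on $K$) with limit function $g$. The translates $x(\cdot+s_{n})$ all lie in $\mathcal{F}_{K}$ and are equicontinuous (same equi-uniform-continuity as above), so by Arzelà–Ascoli and diagonal extraction, along a further subsequence $x(\cdot+s_{n})\to h$ uniformly on compacta, and passing to the limit in the variation-of-constants formula (as in Lemma \ref{Propostion1}) shows $h$ is a mild solution on $\mathbb{R}$ of $h'=Ah+g(t,h)$ with range in $K$. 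Now apply the construction in the reverse direction with $(-s_{n})_{n}$: along a further subsequence $h(\cdot-s_{n})\to \tilde x$ uniformly on compacta, and $\tilde x$ is a mild solution of \eqref{Eq1SubStep} with range in $K$, i.e.\ $\tilde x\in\mathcal{F}_{K}$. The key point is that $\lambda_{K}$ is translation-invariant and lower semicontinuous, so $\lambda_{K}(h)\leq\liminf\lambda_{K}(x(\cdot+s_{n}))=\lambda_{K}(x)=m$ and similarly $\lambda_{K}(\tilde x)\leq\lambda_{K}(h)\leq m$; since $\tilde x\in\mathcal{F}_{K}$ we get $\lambda_{K}(\tilde x)=m$, so $\tilde x$ is a minimal $K$-valued solution, and by uniqueness $\tilde x=x$. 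By Lemma \ref{lem:subsubsection} (subsequence criterion), the full sequences converge: $x(\cdot+s_{n})\to h$ and $h(\cdot-s_{n})\to x$ pointwise. This is precisely Bochner's definition of almost automorphy, so $x\in AA(\mathbb{R},X)$, and with uniform continuity, $x\in KAA(\mathbb{R},X)$.

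The main obstacle I anticipate is the limit passage in the integral term of the variation-of-constants formula when only Stepanov almost automorphy of $f$ is available: one cannot appeal to uniform convergence of $f(\sigma+s_{n},x(\sigma+s_{n}))$ pointwise in $\sigma$, but must instead control $\int_{s}^{t}T(t-\sigma)[\,\cdot\,]\,d\sigma$ by splitting $[s,t]$ into unit intervals and using the $S^{p}$-convergence together with $S^{p}$-uniform continuity of $g$ (Remark \ref{RemarkContSpG3}) — exactly the computation performed in the proof of Lemma \ref{Propostion1}, so in the write-up this step should be invoked by reference rather than repeated. A secondary subtlety is justifying that the equicontinuity of the family of solutions is uniform over all solutions with range in $K$; this is immediate from inspecting the modulus-of-continuity bound in the proof of Theorem \ref{Lemma31}, which involves only $K$-dependent constants.
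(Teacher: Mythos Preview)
Your proof is correct and essentially identical to the paper's: both use Lemmas \ref{Lemma Relativ Comp p not 1} and \ref{Propostion1} to populate $\mathcal{F}_K$, then Arzel\`a--Ascoli (equicontinuity via Theorem \ref{Lemma31}) on a minimizing sequence for (i), and the translate-and-return argument together with translation-invariance and lower semicontinuity of $\lambda_K$ for (ii). The only cosmetic difference is your final appeal to Proposition \ref{EssebbarCharac} and Lemma \ref{lem:subsubsection}, which is unnecessary: the definition of compact almost automorphy only requires \emph{one} subsequence, and the uniform-on-compacta convergences $x(\cdot+s_n)\to h$ and $h(\cdot-s_n)\to x$ you already obtained along the final extracted subsequence give compact almost automorphy directly, exactly as the paper concludes.
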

\begin{proof}
Let $K:=\displaystyle \overline{ \{x_{0}(t):\ t\geq t_{0}\}} $, by Lemma \ref{Lemma Relativ Comp p not 1}, the set $K$ is compact in $X$.\\
\textbf{(i)} Define $\displaystyle  \delta=\inf_{x\in\mathcal{F}_{K}}\lambda_{K}(x)$.  Using  Lemma \ref{Propostion1}, we claim that the set  $ \mathcal{F}_{K} $ is nonempty.\\
Hence, there exists a sequence $(x_{n})_{n} \subset \displaystyle  \mathcal{F}_{K}$, such that
\begin{eqnarray}
\lim_{n\rightarrow+\infty}\lambda_{K}(x_{n})=\delta . \label{SequInf}
\end{eqnarray}
Given $ U_{n}:= \lbrace x_{n} : \; n\in  \n \rbrace \subset C(\mathbb{R},X)$, by definition of $\displaystyle  \mathcal{F}_{K}$, the set $ U_{n}(t):=\lbrace x_{n}(t) , \; n\in  \n \rbrace  \subset K$, for each
$t\in\mathbb{R}$.\\  
Since $ f  \in AAS^{p}U(\mathbb{R}\times X, X)$, it follows by Theorem \ref{Lemma31}, that tha family $ U_{n} $ is equicontinuous. Hence, in view of Arzela-Ascoli Theorem, we claim that $U_{n}$ is a relatively compact subset of $C(\mathbb{R},X)$ endowed with the topology of compact convergence. Therefore,
there exists a subsequence of $(x_{n})_{n}$ (steel denoted by $(x_{n})_{n}$) such that
\begin{eqnarray}
x_{n}(t)\rightarrow x_{\ast}(t)\qquad\mathrm{as}\quad n\rightarrow
+\infty, \label{ConvSubseqAsArz}
\end{eqnarray}
uniformly on each compact subset of $\mathbb{R}$. Obviously, $x_{\ast}\in
C(\mathbb{R},X)$ with range including in $ K $ (i.e., $x_{\ast}\in
C_{K}\mathbb{R},X)$). \\
%By (4.3), we deduce that $\displaystyle  f(t,x_{n}%
%(t))\rightarrow f(t,x_{\ast}(t))$ as $n\rightarrow+\infty$ uniformly on each
%compact subset of $\mathbb{R}$.
Since, for every $n \in \n $, $x_{n}$ is a mild solution on $\mathbb{R}$ of equation \eqref{Eq1SubStep}, we claim (using the same proof as in Lemma \ref{Propostion1}) that 
%\[
%x_{n}(t)=T(t-s)x_{n}(s)+\int_{s}^{t}T(t-\sigma)f(\sigma,x_{n}(\sigma
%))\ d\sigma\qquad\text{ for }t\geq s
%\]
%and letting $n\rightarrow+\infty$, we have
%\[
$$
x_{\ast}(t)=T(t-s)x_{\ast}(s)+\int_{s}^{t}T(t-\sigma)f(\sigma,x_{\ast}%
(\sigma))\ d\sigma\qquad\text{ for }t\geq s.
$$
Thus $x_{\ast}$ is a mild solution on $\mathbb{R}$ of equation \eqref{Eq1SubStep}. This implies that $x_{\ast}\in\mathcal{F}_{K}$ and that
\begin{eqnarray}
\delta\leq\lambda_{K}(x_{\ast}). \label{InegSubFunFirst}
\end{eqnarray}
By \eqref{ConvSubseqAsArz} and Definition \ref{DefinitionSubvarFunct}, we obtain that
\begin{eqnarray}
\lambda_{K}(x_{\ast})\leq\liminf_{n\rightarrow+\infty}\lambda_{K} \label{AppDefinSubVArFun1}
(x_{n}).
\end{eqnarray}

From \eqref{ConvSubseqAsArz}, \eqref{InegSubFunFirst} and \eqref{AppDefinSubVArFun1}, we deduce that
\begin{eqnarray}
\lambda_{K}(x_{\ast})=\delta. \label{AppDefinSubVArFun2}
\end{eqnarray}
Consequently,  $x_{\ast}$ is a minimal $ K $-valued solution:
\begin{eqnarray}
\lambda_{K}(x_{\ast})=\inf_{x\in\mathcal{F}_{K}}\lambda_{K}(x). \label{AppDefinSubVArFun3}
\end{eqnarray}
This proves the existence of the minimal $K$-valued solution of equation \eqref{Eq1SubStep}.\\
\textbf{(ii)} Let $x_{\ast}$ be the unique minimal solution with relatively compact range
of equation \eqref{Eq1SubStep}. We show that $x_{\ast}$ is compact almost automorphic. In fact, let $(t'_{n})_{n}$ be a sequence of real numbers. Since $ f \in AAS^{p}U(\r \times X, X) $ it follows that 
% We prove that there exist a subsequence of $(t_{n})_{n}$ such that
%\[
%x_{\ast}(t+t_{n})\rightarrow y_{\ast}(t)\qquad\mathrm{as}\quad n\rightarrow
%+\infty,\eqno  (4.8)
%\]%
%\[
%y_{\ast}(t-t_{n})\rightarrow x_{\ast}(t)\qquad\mathrm{as}\quad n\rightarrow
%+\infty,\eqno  (4.9)
%\]
%uniformly on each compact subset of $\mathbb{R}$. In fact by assumption, we
there exist a subsequence $(t_{n})_{n} \subset (t'_{n})_{n} $ and a function $ g: \mathbb{R}\times X \longrightarrow X $ such that \eqref{DefAlmAuomUSubv} holds.\\
%\[
%\lim_{n\rightarrow+\infty}\sup_{x\in K}\parallel f(t+t_{n},x)-g(t,x)\parallel
%=0,\eqno  (4.10)
%\]%
%\[
%\lim_{n\rightarrow+\infty}\sup_{x\in K}\parallel g(t-t_{n},x)-f(t,x)\parallel
%=0,\eqno  (4.11)
%\]
Define the set $ U_{n}=\lbrace x_{n}: \;  n\in \mathbb{N}  \rbrace \subset C(\r, X) $ where for each $ t\in \r $, $ x_{n}(t):=x_{\ast}(t+t_{n}) $.\\
 By assumption, for each $ t\in \r $, $ x_{n}(t) \in K $. Hence, for each $ t\in \r $ $$ U_{n}(t) \subset K .$$
Moreover, by Theorem \ref{Lemma31}, it holds in view of Arzela-Ascoli Theorem in  $ C(\r, X) $ endowed with topology of compact convergence that, there exist $ y_{\ast} \in C(\r,X)$ and a subsequence $ (t_{n}) $ such that $$ x_{n}(t) \rightarrow y_{\ast}(t) \quad \text{as}\; n\rightarrow \infty ,$$
uniformly on compact subsets of $ \r $. Therefore, $$ T(t-s)x_{n}(s) \rightarrow T(t-s) y_{\ast}(s) \quad \text{as}\; n\rightarrow \infty ,$$
uniformly on compact subsets of $ \r $. On the other hand, since $ x_{\ast} $ is a mild solution of equation \eqref{Eq1SubStep}, we have
\begin{eqnarray*}
x_{\ast}(t+t_{n})=T(t-s)x_{\ast}(s+t_{n})+\int_{s}^{t}T(t-\sigma)f(\sigma
+t_{n},x_{\ast}(\sigma+t_{n}))\ d\sigma, \quad t\geq s.
\end{eqnarray*}
Then, using the same reasoning as in Lemma \ref{Propostion1},  we claim that 
\begin{eqnarray*}
y_{\ast}(t)=T(t-s)y_{\ast}(s)+\int_{s}^{t}T(t-\sigma)f(\sigma
+t_{n},y_{\ast}(\sigma))\ d\sigma, \quad t\geq s.
\end{eqnarray*}
%where $g$ is a map from $\mathbb{R}\times K$ to $X$. Since $\displaystyle
%\sup_{t\in\mathbb{R}}\sup_{x\in K}\parallel f(t,x)\parallel<+\infty$ (c.f.
%Lemma 2.2) and (4.8) holds, then by applying Lemma 3.2 with $u=x_{\ast}$,
%$F=f$ and the sequence $(t_{n})_{n}$ we obtain (4.8) 
Consequently, $y_{\ast}$ is a mild solution on $\mathbb{R}$ with relatively compact range in $ K $ of equation
\begin{eqnarray}
x^{\prime}(t)=Ax(t)+g(t,x(t)), \quad t\in \mathbb{R}. \label{EqForG}
\end{eqnarray}
By definition of the subvariant $\lambda_{K}$, we obtain that
$\lambda_{K}(y_{\ast})\leq\lambda_{K}(x_{\ast})$. Then, from \eqref{AppDefinSubVArFun3}  we deduce that
\begin{eqnarray}
\lambda_{K}(y_{\ast})\leq\inf_{x\in\mathcal{F}_{K}}\lambda_{K}(x). \label{SubInfMildSol2}
\end{eqnarray}
Furthermore, since $ y_{\ast} $ is a mild solution with relatively compact range in $ K $ of equation \eqref{EqForG}, it follows in view of \eqref{DefAlmAuomUSubv} and by using the same reasoning as for $x_{\ast}$ by taking the sequence $ (-t_{n})_{n} $ instead of $ (t_{n})_{n} $, that there exists a mild solution $ z_{\ast} $ with relatively compact range in $ K $ of equation \eqref{Eq1SubStep}. Then, we have 
$$\lambda_{K}(z_{\ast})\leq\lambda_{K}(y_{\ast}). $$Therefore, by \eqref{SubInfMildSol2}, we obtain
\begin{eqnarray*}
\lambda_{K}(z_{\ast})\leq\inf_{x\in\mathcal{F}_{K}}\lambda_{K}(x). 
\end{eqnarray*}
Hence
$$\displaystyle  \lambda_{K}(z_{\ast})=\inf_{x\in\mathcal{F}_{K}}\lambda
_{K}(x) .$$ Therefore, $z_{\ast}$ is a minimal  solution with relatively compact range in $K$ of equation \eqref{Eq1SubStep}. By uniqueness we
deduce that $x_{\ast}=z_{\ast}$.  Consequently, $x_{\ast}$ is compact almost automorphic. $\ \blacksquare$
\end{proof}
From Theorem \ref{Main Theorem 1}, we deduce the following result. 
\begin{corollary}\label{Corollary1}
Let $ f  \in AAS^{p}U(\mathbb{R}\times X, X)$. Assume that \textbf{(H1)-(H3)} are satisfied. If
equation \eqref{Eq1SubStep} has a unique bounded mild solution $ x$ on $\R$, then $x$ is compact almost automorphic.
\end{corollary}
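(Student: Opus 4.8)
The plan is to deduce Corollary \ref{Corollary1} directly from Theorem \ref{Main Theorem 1} by exhibiting a suitable compact set $K$ and a suitable subvariant functional $\lambda_K$ for which the unique bounded mild solution on $\R$ is automatically the unique minimal $K$-valued solution. First I would let $x$ denote the unique bounded mild solution of \eqref{Eq1SubStep} on $\R$, and set $M_0:=\sup_{t\in\R}\|x(t)\|<\infty$ and $t_0:=0$. Then $x$ restricted to $[t_0,+\infty)$ is a mild solution of \eqref{Eq1SubStep} which is defined and bounded on $[t_0,+\infty)$, so the standing hypotheses of Theorem \ref{Main Theorem 1} are met. By Lemma \ref{Lemma Relativ Comp p not 1}, the range $\{x(t):t\geq t_0\}$ is relatively compact; moreover, since $x$ is bounded on all of $\R$, the same argument applied on $(-\infty,t_0]$ (or simply the construction in Lemma \ref{Propostion1}) shows $K:=\overline{\{x(t):t\in\R\}}$ is a compact subset of $X$. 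This is the $K$ I would feed into Theorem \ref{Main Theorem 1}.

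Next I would choose the concrete subvariant functional $\lambda_K(y):=\sup_{t\in\R}\|y(t)\|$ on $C_K(\R,X)$, which the excerpt already records as a legitimate example of a subvariant functional (translation invariance is clear, and lower semicontinuity for compact convergence follows since a sup of continuous seminorms is lower semicontinuous and a uniform-on-compacts limit controls each $\|y(t)\|$). By part \textbf{(i)} of Theorem \ref{Main Theorem 1}, $\mathcal{F}_K$ is nonempty and there exists a minimal $K$-valued solution. To invoke part \textbf{(ii)} I must check that this minimal solution is unique. Here is where I would use the hypothesis that \eqref{Eq1SubStep} has a \emph{unique} bounded mild solution on $\R$: any element of $\mathcal{F}_K$ is by definition a mild solution on $\R$ with range contained in the compact (hence bounded) set $K$, so it is a bounded mild solution on $\R$, and by the uniqueness hypothesis it must equal $x$. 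Therefore $\mathcal{F}_K=\{x\}$ is a singleton, so trivially the minimal $K$-valued solution is unique and equals $x$.

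Finally, applying part \textbf{(ii)} of Theorem \ref{Main Theorem 1} to this unique minimal $K$-valued solution $x$ yields that $x$ is compact almost automorphic, which is exactly the assertion of the corollary. I would also remark that one does not even need to specify $\lambda_K$ carefully: once $\mathcal{F}_K=\{x\}$, for \emph{any} subvariant functional $\lambda_K$ the unique element of $\mathcal{F}_K$ is automatically the unique minimizer, so part \textbf{(ii)} applies regardless of the choice. The main (indeed essentially the only) obstacle is the bookkeeping at the start: one must be careful that the range of $x$ over all of $\R$ — not merely over $[t_0,+\infty)$ — is relatively compact, so that $K$ as defined is genuinely compact; this follows because boundedness of $x$ on $\R$ lets us apply the relative-compactness argument (Lemma \ref{Lemma Relativ Comp p not 1}, or the translation-and-diagonal construction of Lemma \ref{Propostion1}) without any half-line restriction. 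Everything else is a direct specialization of Theorem \ref{Main Theorem 1}.
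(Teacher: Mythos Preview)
Your proposal is correct and follows essentially the same approach as the paper: show $\mathcal{F}_K=\{x\}$ via the uniqueness hypothesis and then invoke Theorem~\ref{Main Theorem 1}\textbf{(ii)}. The paper makes slightly more economical choices---it takes $K:=\overline{\{x(t):t\geq 0\}}$ (so compactness is immediate from Lemma~\ref{Lemma Relativ Comp p not 1} without needing to argue separately about the full line) and the trivial functional $\lambda_K\equiv 1$, which is exactly the observation you make in your final remark.
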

\begin{proof}
By assumptions, all hypotheses of Theorem \ref{Main Theorem 1} are satisfied and in particular, the set $K:=\displaystyle \overline{ \{x(t):\ t\geq 0\}} $ is compact. Hence, by taking $ \lambda_{K}\equiv 1 $, we conclude the result in view of Theorem \ref{Main Theorem 1}-\textbf{(ii)}. $\ \blacksquare$
\end{proof}
For $p=1$, Theorem  \ref{Main Theorem 1} and Corollary \ref{Corollary1} become :
\begin{theorem} \label{Main Theorem 2}
Let  $K$ be a compact subset of $X$.  Assume that \textbf{(H1)}  \textbf{(H2)} and  \textbf{(H4)} are satisfied. Moreover, if equation \eqref{Eq1SubStep} has at least a mild solution $x_{0}$ defined and bounded on $[t_{0},+\infty)$. Then, the following hold:\\
\textbf{(i)}  If $\lambda_{K}$ is a subvariant functional associated to the compact set $K$, then equation \eqref{Eq1SubStep} has at least a minimal $K$-valued solution $x$. \\ 
\textbf{(ii)} If equation\eqref{Eq1SubStep} has a unique minimal $K$-valued solution $x$, then $x$ is compact almost automorphic.
\end{theorem}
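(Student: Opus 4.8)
The plan is to follow the proof of Theorem \ref{Main Theorem 1} line by line, replacing the $p>1$ ingredients (boundedness of $f$, H\"older's inequality, Lemma \ref{Lemma Relativ Comp p not 1}) by their $p=1$ counterparts. The first step is the reduction: hypothesis \textbf{(H4)} puts us exactly in the situation of Remark \ref{RemarkContSpG3}\textbf{(a)}, so $f\in AAS^{1}U(\r\times X,X)$; consequently Theorem \ref{Lemma31} (in the case $p=1$), Lemma \ref{Propostion1}, Lemma \ref{Propostion12} and Lemma \ref{Lemma32BoundFandGStep3} are all at our disposal. Setting $K:=\overline{\{x_{0}(t):\ t\geq t_{0}\}}$, Lemma \ref{Propostion12} (which requires precisely \textbf{(H1)}--\textbf{(H2)} and \textbf{(H4)}) shows that $x_{0}$ has relatively compact range, so $K$ is indeed a compact subset of $X$, and Lemma \ref{Propostion1} then gives $\mathcal{F}_{K}\neq\emptyset$.

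For \textbf{(i)} I would take a minimizing sequence $(x_{n})_{n}\subset\mathcal{F}_{K}$ with $\lambda_{K}(x_{n})\to\delta:=\inf_{x\in\mathcal{F}_{K}}\lambda_{K}(x)$. Each $x_{n}$ is a mild solution of \eqref{Eq1SubStep} with range in $K$, hence uniformly continuous by Theorem \ref{Lemma31}; therefore the family $\{x_{n}\}_{n}$ is equicontinuous and pointwise relatively compact, and Arzel\`a--Ascoli together with a diagonal extraction yields $x_{\ast}\in C_{K}(\r,X)$ with $x_{n}\to x_{\ast}$ uniformly on compact subsets of $\r$. Passing to the limit in the variation-of-constants formula exactly as in the proof of Lemma \ref{Propostion1} — the convergence of the integral term being where the $S^{1}$-uniform continuity of $g$ from Remark \ref{RemarkContSpG3}\textbf{(b)} and the bounds of Lemma \ref{Lemma32BoundFandGStep3} are used — shows $x_{\ast}\in\mathcal{F}_{K}$, so $\delta\leq\lambda_{K}(x_{\ast})$; combined with lower semicontinuity of $\lambda_{K}$ (Definition \ref{DefinitionSubvarFunct}\textbf{(ii)}) and \eqref{ConvSubseqAsArz}-type convergence, $\lambda_{K}(x_{\ast})\leq\liminf_{n}\lambda_{K}(x_{n})=\delta$. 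Hence $\lambda_{K}(x_{\ast})=\delta$ and $x_{\ast}$ is a minimal $K$-valued solution.

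For \textbf{(ii)}, let $x_{\ast}$ be the unique minimal $K$-valued solution. Given any real sequence $(t'_{n})_{n}$, the $S^{1}$-uniform almost automorphy of $f$ produces a subsequence $(t_{n})_{n}$ and a limit $g$ satisfying \eqref{DefAlmAuomUSubv}. The translates $x_{n}:=x_{\ast}(\cdot+t_{n})$ have values in $K$ and are equicontinuous by Theorem \ref{Lemma31}, so after a further extraction $x_{n}\to y_{\ast}$ uniformly on compacts; passing to the limit in the variation-of-constants formula as in Lemma \ref{Propostion1} shows $y_{\ast}$ is a mild solution with range in $K$ of $x'(t)=Ax(t)+g(t,x(t))$, and translation invariance of $\lambda_{K}$ together with lower semicontinuity give $\lambda_{K}(y_{\ast})\leq\lambda_{K}(x_{\ast})=\delta$. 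Applying the same construction to $y_{\ast}$ with $(-t_{n})_{n}$ yields a mild solution $z_{\ast}$ of \eqref{Eq1SubStep} with range in $K$ and $\lambda_{K}(z_{\ast})\leq\lambda_{K}(y_{\ast})\leq\delta$, so $z_{\ast}\in\mathcal{F}_{K}$ is minimal, whence $z_{\ast}=x_{\ast}$ by uniqueness. Since an arbitrary sequence $(t'_{n})$ thus admits a subsequence $(t_{n})$ along which $x_{\ast}(\cdot+t_{n})\to y_{\ast}$ and $y_{\ast}(\cdot-t_{n})\to x_{\ast}$ uniformly on compacts, $x_{\ast}$ is compact almost automorphic.

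The genuinely analytic obstacles — the $S^{1}$-uniform continuity of a mild solution with relatively compact range (Theorem \ref{Lemma31}, case $p=1$) and the relative compactness of the range of a merely bounded solution when $f$ is unbounded (Lemma \ref{Propostion12}, which exploits the product structure of \textbf{(H4)} via Lemma \ref{Lemma1CaseP1}) — are already settled. What remains is bookkeeping: one must be careful to invoke Remark \ref{RemarkContSpG3}\textbf{(a)} at the outset so that the $AAS^{1}U$-based tools apply, and to reuse verbatim the $L^{1}$ limit-passage in the integral term from the proof of Lemma \ref{Propostion1} rather than the H\"older estimate used for $p>1$.
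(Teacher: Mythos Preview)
Your proposal is correct and matches the paper's own proof, which is literally a one-line remark: argue as in Theorem \ref{Main Theorem 1}, invoking Remarks \ref{Remark31} and \ref{RemarkContSpG3} to place $f\in AAS^{1}U(\r\times X,X)$ and substituting Lemma \ref{Propostion12} for Lemma \ref{Lemma Relativ Comp p not 1}. Your write-up simply unpacks that sentence in full detail; the only (harmless) slip is that in part \textbf{(i)} the limit passage in the integral uses the $S^{1}$-uniform continuity of $f$ itself (Lemma \ref{LemApSpCom1Paper3}) rather than of the limit function $g$, since no time-translation is involved there.
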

\begin{proof}
We argue as the same as in the proof of Theorem \ref{Main Theorem 1}, using Remarks  \ref{Remark31}, \ref{RemarkContSpG3} and Lemma \ref{Propostion12} in place of Lemma \ref{Lemma Relativ Comp p not 1}. $\ \blacksquare$
\end{proof}
Hence, we have the following Corollary.
\begin{corollary}\label{Corollary2}
Let  \textbf{(H1)},  \textbf{(H2)} and  \textbf{(H4)} be satisfied. If equation \eqref{Eq1SubStep} has a unique bounded mild solution $ x$, then $x$ is compact almost automorphic.
\end{corollary}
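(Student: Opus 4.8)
The plan is to deduce Corollary \ref{Corollary2} from Theorem \ref{Main Theorem 2} exactly as Corollary \ref{Corollary1} was deduced from Theorem \ref{Main Theorem 1}. Suppose equation \eqref{Eq1SubStep} has a unique bounded mild solution $x$ on $\R$. In particular $x$ is a mild solution defined and bounded on the right half line $[0,+\infty)$, so the existence hypothesis of Theorem \ref{Main Theorem 2} is met with $t_{0}=0$. By Lemma \ref{Propostion12} (whose hypotheses \textbf{(H1)}, \textbf{(H2)}, \textbf{(H4)} are precisely those assumed here, with $\phi,\psi\in AAS^{1}(\r,X)$ as in \textbf{(H4)}), the range $\{x(t):\ t\geq 0\}$ is relatively compact, hence $K:=\overline{\{x(t):\ t\geq 0\}}$ is a compact subset of $X$.

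Next I would show that $x$ is, in fact, the unique element of $\mathcal{F}_{K}$, so that choosing the trivial subvariant functional $\lambda_{K}\equiv 1$ makes $x$ automatically the unique minimal $K$-valued solution. Indeed, $\lambda_{K}\equiv 1$ is constant, hence invariant by translation and lower semicontinuous, so it is a subvariant functional associated to $K$ in the sense of Definition \ref{DefinitionSubvarFunct}. By Lemma \ref{Propostion1} applied to the restriction $x|_{[0,\infty)}$ (using \textbf{(H1)}--\textbf{(H2)} and the fact that $f\in AAS^{1}(\r\times X,X)$ by Remark \ref{RemarkContSpG3}(a)), there is a mild solution on $\R$ with range contained in $K$, so $\mathcal{F}_{K}\neq\emptyset$; and any $y\in\mathcal{F}_{K}$ is a bounded mild solution of \eqref{Eq1SubStep} on $\R$, whence $y=x$ by the assumed uniqueness. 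Thus $\mathcal{F}_{K}=\{x\}$, and trivially $\lambda_{K}(x)=1=\inf_{y\in\mathcal{F}_{K}}\lambda_{K}(y)$, so $x$ is the unique minimal $K$-valued solution.

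Finally, Theorem \ref{Main Theorem 2}(ii) yields directly that $x$ is compact almost automorphic, which is the claim. $\ \blacksquare$

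The only mild subtlety, and the step I would be most careful about, is checking that the constant functional $\lambda_{K}\equiv 1$ genuinely satisfies Definition \ref{DefinitionSubvarFunct} and that uniqueness of bounded mild solutions on $\R$ forces $\mathcal{F}_{K}$ to be a singleton — in other words, that no extra mild solution with range in $K$ can appear beyond $x$ itself. Both are immediate, so there is no real obstacle here; the corollary is a straightforward specialization of Theorem \ref{Main Theorem 2}, mirroring the $p\in(1,\infty)$ case in Corollary \ref{Corollary1}, with Lemma \ref{Propostion12} supplying the relative compactness of the range in place of Lemma \ref{Lemma Relativ Comp p not 1}.
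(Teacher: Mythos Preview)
Your proof is correct and follows exactly the route implicit in the paper: Corollary~\ref{Corollary2} is stated without proof there, but it is clearly meant to be obtained from Theorem~\ref{Main Theorem 2} in the same way Corollary~\ref{Corollary1} is obtained from Theorem~\ref{Main Theorem 1}, namely by taking the trivial subvariant functional $\lambda_{K}\equiv 1$ on $K:=\overline{\{x(t):\ t\geq 0\}}$, with Lemma~\ref{Propostion12} supplying the compactness of $K$ in place of Lemma~\ref{Lemma Relativ Comp p not 1}. Your added justification that $\mathcal{F}_{K}=\{x\}$ under the uniqueness assumption is correct and simply makes explicit what the paper leaves to the reader.
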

\section{Application}\label{section4}
Consider the following nonautonomous reaction-diffusion problem :
\begin{equation}
\left\{
\begin{array}
[c]{l}%
\dfrac{\partial}{\partial t}v(t,\xi)=\displaystyle \sum_{k=1}^{n}\dfrac{\partial^{2}}{\partial \xi_{i}^{2}%
}v(t,\xi)+g(v(t,\xi))+h\left(  t,\xi\right)  \text{ for }t\in\mathbb{R}\text{ and
}\xi\in\Omega ,\\
\text{ \ \ \ \ \ \ \ \ \ \ \ \ \ \ \ \ \ \ \ \ \ }\\
v(t,\xi)=0\; \text{ for }t\in\mathbb{R}, \ \xi\in \partial \Omega ,
\end{array}
\right.  \label{AppPap3}
\end{equation}
where $ \Omega \subset \r^{n}, \ n\geq 1$ is bounded and open with smooth boundary $ \partial \Omega $, $g:\mathbb{R}\rightarrow\mathbb{R}$ and $h:\mathbb{R}\times\Omega \rightarrow\mathbb{R}$.\\
Given the Banach space $X=C_{0}\left(  \overline{\Omega}  \right) := \lbrace x \in C(\overline{\Omega} , \r ): \, x \vert_{\partial \Omega }=0\rbrace $ 
endowed with the uniform norm topology. Let
\[
h(t,\xi)=\sin\left(  \frac{1}{2+cost+cos\sqrt{2}t}\right)  h_{0}(\xi) + a(t)\text{
for }t\in\mathbb{R}\text{ and }\xi\in \Omega
\]
where $ h_{0}\in X $ and $ \displaystyle a(t)= \sum_{n\geq 1} \beta_{n}(t)$ such that for every $ n \geq 1 $ $$ \beta_{n}(t)= \displaystyle \sum_{i \in P_{n}} H(n^{2}(t-i)),$$
with $ P_{n}=3^{n}(2\mathbb{Z}+1) $ and $ H \in C_{0}^{\infty}(\mathbb{R},\mathbb{R}) $ with support in $ (\frac{-1}{2},\frac{1}{2}) $ such that 
$$  H \geq 0 , \quad H(0)=1 \quad \text{and} \quad \int_{\frac{-1}{2}}^{\frac{1}{2}} H(s) ds =1 .  $$
\begin{remark}\cite{Tar}\label{Tar}
The function $a \in  C^{\infty}(\r,\r) $ but $a \notin  AA(\r,\r) $ since it is not bounded on $\r$.
However $ a\in  AAS^{1}(\r ,\r).$
\end{remark}

Define the operator $A:D(A)\subset
X\rightarrow X$ by
\[
\left\{
\begin{array}
[c]{l}%
D(A)=\left\{  x\in X \cap H_{0}%
^{1}\left(  \Omega \right)  :\ \displaystyle \sum_{k=1}^{n}\dfrac{\partial^{2}}{\partial \xi_{i}^{2}%
}x:= \Delta x \in X  \right\}  ,\\
Ax=\Delta x.
\end{array}
\right.
\]
It is well known that $(A,D(A))$ generates a compact $C_{0}$-semigroup $(T(t))_{t\geq0}$ on $X$ such that 
\begin{equation}
\parallel T(t)\parallel\leq M e^{-\lambda_{1} t}\qquad\text{ for }%
t\geq0 \label{Z},
\end{equation}
where $\lambda_{1}:= \min \{\lambda : \ \lambda \in \sigma(- A) \} > 0$, see \cite{Haraux} for more details.\\
 Consequently, hypotheses \textbf{(H1)} and  \textbf{(H2)} are satisfied. Moreover,  we assume that $g$  is locally Lipschitzian such that 
\begin{equation}
g(0)=0 \; \text{ and }  \; \limsup_{\mid r\mid\rightarrow+\infty}\dfrac{g(r)}{r}<\lambda_{1}. \label{Condition1 on g}
\end{equation}
\begin{remark}\label{Remark1 Appli}
Condition \eqref{Condition1 on g} implies: there exists $\bar{M}\geq 0$ such that $rg(r)\leq Cr^{2}$ for
$\mid r\mid\geq \bar{M}$ with $C<\lambda_{1} .$
\end{remark}
Consider $G:X\rightarrow X$ and $H:\mathbb{R}\rightarrow X$ defined by 
\begin{equation*}
G(x)(\xi)=g(x(\xi)),\qquad x\in X, \; \xi\in \Omega
\end{equation*}
and
\begin{equation*}
H(t)(\xi)=h(t,\xi),\qquad x\in X, \; \xi\in \Omega .
\end{equation*}
Hence, we denote by $f:\mathbb{R\times}X\rightarrow X$ the function defined by
\begin{equation}
f(t,x)=G(x)+H(t),\qquad t\in\mathbb{R}, \; x\in X. \label{f}
\end{equation}
Then, equation \eqref{AppPap3} is equivalent to the following abstract evolution equation:
\begin{equation}
x^{\prime}(t)=Ax(t)+f(t,x(t))\qquad\text{ for }t\in\mathbb{R}. \label{aa}
\end{equation}
The corresponding initial value problem is the following:
\begin{equation}
\left\{
\begin{array}
[c]{l}%
x^{\prime}(t)=Ax(t)+f(t,x(t))\qquad\text{ for }t\geq0\\
x(0)=x_{0},
\end{array}
\right.  \label{SemCPrb}%
\end{equation}
Now, under the above assumptions we prove the existence of a unique bounded global solution to equation \eqref{SemCPrb} under the weak assumption that $ H $ (and hence $f$) is not bounded on $ t $, see Remark \ref{Tar}.
\begin{theorem}\label{Theorem 4.1}
For each $ x_{0} \in X $ there exist $ t(x_0 ) >0 $ and a unique  maximal mild solution $x(\cdot ,x_{0}) $ of equation \eqref{SemCPrb} defined on $ [0, t(x_0 ) )$  such that 
\begin{equation}
t(x_0 )=+\infty \quad \text{or} \quad  \limsup_{t\rightarrow t(x_0 ) } \| x(t,x_0 )\| = +\infty .
\end{equation} \label{ETFFormula}
\end{theorem}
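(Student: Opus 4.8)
The plan is to establish local existence and uniqueness by a Banach fixed point argument applied to the variation of constants formula, and then to build the maximal solution and derive the blow-up alternative by a continuation argument in which the local existence time depends \emph{only} on an a priori bound on the solution (and not on the initial time). Observe that only hypothesis \textbf{(H1)}, the local Lipschitz continuity of $g$, and the fact that $H(\cdot)\in L^{1}_{loc}(\mathbb{R},X)$ (which holds since, by Remark \ref{Tar}, $h(t,\cdot)$ is the sum of an almost automorphic and an $S^{1}$-almost automorphic term, hence $H\in AAS^{1}(\mathbb{R},X)\subset L^{1}_{loc}(\mathbb{R},X)$) will be used here; the compactness of $(T(t))_{t\geq0}$ plays no role in this statement.

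First I would record the needed structural facts about $f(t,x)=G(x)+H(t)$. Since $g$ is locally Lipschitzian on $\mathbb{R}$ with $g(0)=0$, the Nemytskii operator $G$ is locally Lipschitzian on $X=C_{0}(\overline{\Omega})$ and maps bounded sets into bounded sets: for $\|x\|,\|y\|\leq\rho$ one has $\|G(x)-G(y)\|\leq L_{\rho}\|x-y\|$ and $\|G(x)\|\leq L_{\rho}\|x\|$, where $L_{\rho}$ is the Lipschitz constant of $g$ on $[-\rho,\rho]$. Moreover $\|T(t)\|\leq M$ for all $t\geq0$ by \eqref{Z}, and for every finite $\tau>0$ the function $s\mapsto\|H(s)\|$ is integrable on $[0,\tau+1]$, so by absolute continuity of the integral $\sup_{0\leq t_{0}\leq\tau}\int_{t_{0}}^{t_{0}+\delta}\|H(s)\|\,ds\to0$ as $\delta\to0^{+}$. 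For continuous $u$ the map $s\mapsto f(s,u(s))=G(u(s))+H(s)$ is measurable and, on bounded intervals and for bounded $u$, integrable.

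For the local step, given an initial datum $\eta\in X$ with $\|\eta\|\leq R$ prescribed at a time $t_{0}\geq0$, I would consider on $C([t_{0},t_{0}+\delta],X)$ the operator
\[
(\Phi u)(t)=T(t-t_{0})\eta+\int_{t_{0}}^{t}T(t-s)\big(G(u(s))+H(s)\big)\,ds ,
\]
which indeed takes values in $C([t_{0},t_{0}+\delta],X)$ by strong continuity of $(T(t))_{t\geq0}$ and the (standard) continuity of the convolution of a $C_{0}$-semigroup with an $L^{1}$-function. Setting $\rho:=MR+1$ and working in the closed ball $\{u:\ \|u(t)\|\leq\rho \text{ on }[t_{0},t_{0}+\delta]\}$, the estimates
\begin{align*}
\|(\Phi u)(t)-T(t-t_{0})\eta\| &\leq M\Big(L_{\rho}\rho\,\delta+\int_{t_{0}}^{t_{0}+\delta}\|H(s)\|\,ds\Big),\\
\|(\Phi u)(t)-(\Phi v)(t)\| &\leq ML_{\rho}\,\delta\,\|u-v\|_{\infty}
\end{align*}
show that for $\delta\in(0,1]$ small enough $\Phi$ maps the ball into itself and is a contraction; crucially, such a $\delta$ can be chosen depending only on $R$ (through $\rho$ and $L_{\rho}$), on $M$, and on the uniform smallness of $\int_{t_{0}}^{t_{0}+\delta}\|H(s)\|\,ds$ for $t_{0}$ in a bounded set — but \emph{not} on the particular datum $\eta$ nor on $t_{0}$. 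The unique fixed point is the desired local mild solution, and uniqueness of the mild solution on any common interval of two solutions with the same value at one point follows from the local Lipschitz bound and Gronwall's inequality.

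Finally I would set $t(x_{0}):=\sup\{\,b>0:\ \eqref{SemCPrb}\ \text{has a mild solution on}\ [0,b]\,\}$, which is positive by the local step; uniqueness lets one glue these solutions into a single maximal mild solution $x(\cdot,x_{0})$ on $[0,t(x_{0}))$. Suppose, for contradiction, that $t(x_{0})<+\infty$ and $\limsup_{t\to t(x_{0})}\|x(t,x_{0})\|<+\infty$. Then $\|x(\cdot,x_{0})\|$ is bounded near $t(x_{0})$ and, being continuous, is bounded on every $[0,t(x_{0})-\eta]$, hence $\|x(t,x_{0})\|\leq\bar R$ on all of $[0,t(x_{0}))$ for some $\bar R<\infty$. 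By the local step there is $\delta=\delta(\bar R)>0$, independent of the starting time, such that the Cauchy problem started at any $t_{0}\in[0,t(x_{0}))$ from the datum $x(t_{0},x_{0})$ (of norm $\leq\bar R$) has a mild solution on $[t_{0},t_{0}+\delta]$; choosing $t_{0}\in(t(x_{0})-\delta,t(x_{0}))$ and using uniqueness on $[t_{0},t(x_{0}))$ extends $x(\cdot,x_{0})$ to $[0,t_{0}+\delta]$ with $t_{0}+\delta>t(x_{0})$, contradicting the definition of $t(x_{0})$. Hence $t(x_{0})=+\infty$ or $\limsup_{t\to t(x_{0})}\|x(t,x_{0})\|=+\infty$. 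The routine part is the fixed point; the one point that genuinely requires care — and that is the engine of the dichotomy — is that the local existence time be chosen uniformly in the initial time, which relies on the uniform absolute continuity of $\int\|H(\cdot)\|$ over bounded time intervals, a property available precisely because $H\in AAS^{1}(\mathbb{R},X)$ and hence is locally integrable rather than merely pointwise defined.
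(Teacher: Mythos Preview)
Your proposal is correct. Both your argument and the paper's start with a Banach fixed point on the variation of constants formula and define $t(x_{0})$ as a supremum; the genuine difference is in how the blow-up alternative is obtained. The paper argues that if $t(x_{0})<\infty$ and the solution stays bounded by some $C$, then $x$ is \emph{uniformly continuous} on $[0,t(x_{0}))$: it takes sequences $(t_{n}),(s_{n})$ with $\sigma_{n}=t_{n}-s_{n}\to 0$, splits $x(t_{n})-x(s_{n})$ using the mild formula, and invokes dominated convergence for the integral pieces. Uniform continuity then yields the existence of $\lim_{t\to t(x_{0})^{-}}x(t)\in X$, from which one restarts the Cauchy problem to overshoot $t(x_{0})$. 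Your route is the standard textbook continuation argument: you keep explicit track of the fact that the local existence time $\delta$ depends only on a norm bound $R$ for the initial datum (through $\rho=MR+1$, $L_{\rho}$) and on the modulus of absolute continuity of $\int\|H\|$ on the bounded window $[0,t(x_{0})+1]$, so a uniform bound $\bar R$ on $[0,t(x_{0}))$ lets you restart at any $t_{0}\in(t(x_{0})-\delta,t(x_{0}))$ and extend past $t(x_{0})$. Your approach is shorter and avoids the uniform-continuity detour; the paper's approach proves in passing that the solution admits a continuous extension to the closed interval, which is not needed for the dichotomy but is of some independent interest.
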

\begin{proof}
Fix $ R_{0} >0 $ and take $ t_{0}:= \log(2M\dfrac{(L_{R_{0}} +\lambda_{1}\tilde{M}+\lambda_{1})}{\lambda_{1}})\lambda_{1}^{-1} >0 $ where  $ \tilde{M}:= e^{[t_{0}]+1}( [t_{0}]+2) \| f(\cdot,0)\|_{BS^{1}} < \infty .$
$$  E= \{ u \in C([0,t_{0}],X):\ u(0)=x_{0} \; \text{and } \ \| u(t)\| \leq R_{0} \, \text{ for all }  t\in [0,t_{0}]  \} \subset C([0,t_{0}],X).$$
The set $ E $ equipped with the uniform norm topology is complete since it is closed in the Banach space $(C([0,t_{0}],X), \| \cdot\|_{\infty})$. Define the functional $ \Phi : E \longrightarrow C([0,t_{0}],X) $ by 
$$  \Phi(u)(t)=  T(t)x_{0}+ \int_{0}^{t} T(t-s)f(s,u(s))ds \quad \text{for } \, t\in [0,t_{0}].$$
Let $ u\in E $, we show that $ \Phi(u) \in E $. In fact, 
\begin{eqnarray*}
\|\Phi(u)(t)\| & \leq & Me^{-\lambda_{1} t_{0}}\|x_0 \| +  M \int_{0}^{t} e^{-\lambda_{1}(t-s)}\| f(s,u(s)) \| ds \\ &\leq & M e^{-\lambda_{1} t_{0}} R_{0}+  M \int_{0}^{t} e^{-\lambda_{1}(t-s)}\left[ \| f(s,u(s)) -f(s,0)\|+ \|f(s,0)\|\right]  ds 
\\ &\leq & M e^{-\lambda_{1} t_{0}} R_{0}+  M e^{-\lambda_{1} t_{0}}\int_{0}^{t} e^{\lambda_{1} s}L_{R_{0}} \| u(s)\| ds+ Me^{-\lambda_{1} t_{0}} \sum_{k=0}^{[t_{0}]+1} \int_{k}^{k+1} e^{\lambda_{1} s} \|f(s,0)\| ds 
\\ &\leq & e^{-\lambda_{1} t_{0}}  M\left(  1+  \dfrac{L_{R_{0}}}{\lambda_{1}}+ \tilde{M}\right)R_{0} =\dfrac{R_{0}}{2} \leq R_{0}  \quad \text{ for all } t\in \r .
\end{eqnarray*}
Then $ \Phi(E) \subset E $. Now, let $u, v \in E$, then we have
\begin{eqnarray*}
\|\Phi(u)(t)-\Phi(v)(t)\|& \leq &  M \int_{0}^{t} e^{-\lambda_{1}(t-s)}\| f(s,u(s)) -f(s,v(s))\| ds \\ &\leq &e^{-\lambda_{1} t_{0}} M \dfrac{L_{R_{0}} }{\lambda_{1}} \\ &\leq & e^{-\lambda_{1} t_{0}}  M\left(    \dfrac{L_{R_{0}}}{\lambda_{1}}+ \tilde{M} +1 \right)\| u-v\|_{\infty}  = \dfrac{\| u-v\|_{\infty} }{2}  \quad \text{ for all } t\in \r . 
\end{eqnarray*}
Hence  $$  \|\Phi(u)-\Phi(v)\|_{\infty}  \leq \dfrac{\| u-v\|_{\infty} }{2} $$which proves that $ \Phi$ is a contraction map. By Banach fixed point Theorem, we deduce the existence of a unique mild solution $x_{0}$  to equation \eqref{SemCPrb} defined on $ [0,t_{0}] $.
Let $ t_{\varepsilon} > t_{0} $ be well chosen and consider equation \eqref{SemCPrb} for $ t\in [t_{0},t_{\varepsilon}] $ with initial data $ x_{t_{0}}=x_{0}(t_{0}) $. Arguing as above, we can prove the existence of a unique mild solution $ x_{\varepsilon}:[t_{0},t_{\varepsilon}]\longrightarrow X  $ with $ x_{\varepsilon}(t_{0})=x_{t_{0}} $. Then, we define 
 \begin{equation*}
x=\left\{
\begin{array}
[c]{l}%
x_{0}\qquad\text{ in } [0,t_{0}] ,\\
x_{\varepsilon} \qquad\text{ in } [t_{0},t_{\varepsilon}],
\end{array}
\right. 
\end{equation*}
By construction $ x: [0,t_{\varepsilon}]\longrightarrow X  $ is continuous. Therefore $ x $ is the unique solution of equation \eqref{SemCPrb} in $  [0,t_{\varepsilon}] $. Hence, we adopt the same extension technique and we prove the existence of a unique maximal solution $ x: [0,t(x_{0}))\longrightarrow X  $ where 
$ t(x_{0}) := \sup \lbrace t_{0}>0 : \text{ there exist } x \in C([0,t_{0}],X) \text{ solution to \eqref{SemCPrb}}   \rbrace$.
Now,  we show formula \eqref{ETFFormula}. By contradiction, assume that $  t(x_{0})< \infty $ and there exists $C\geq 0$  such that $\|x(t)\|\leq C  $ for all $ t\in [0,t(x_{0})) $. We show that $ x $ is uniformly continuous on $ [0,t(x_{0})) $. Let $ (t_{n})_{n}, (s_{n})_{n} \in [0,t(x_{0}))$ be two sequences such that $ \sigma_{n} =t_{n}-s_{n}\rightarrow 0$ as $ n\rightarrow \infty $.  Without loss of generality we assume that $ 0\leq \sigma_{n} \leq 1 $ for all $ n\geq 0 $. Hence, we have
\begin{eqnarray}
\| T(t_{n})x_{0}-T(s_{n})x_{0}\| \leq  M\|T(\sigma_{n})x_{0}-x_{0}\| \rightarrow 0 \; \text{as} \; n\rightarrow \infty \label{UnifCont1}
\end{eqnarray}
follows from the strong continuity of the semigroup $ (T(t))_{t\geq 0} $ on $X$. Moreover, we have
\begin{eqnarray*}
&& \int_{0}^{t_{n}} T(t_{n}-s) f(s,x(s))ds -\int_{0}^{s_{n}} T(s_{n}-s) f(s,x(s))ds \\ &=& \int_{0}^{\sigma_{n}} T(t_{n}-s) f(s,x(s))ds - \int_{0}^{s_n } T(s_n -s)\left[  f(s,x(s)) - f(s-\sigma_n , x(s -\sigma_n)\right] ds 
\end{eqnarray*}
Note that by local Lipschitzity of $ g $ it holds that  $ f $ is also locally Lipschitzian on $X$ and it is continuous on $ \in [0,t(x_{0})] $ which implies that 
$$  \sup_{(t,x) \in  [0,t(x_{0})] \times \bar{B}(0,C)} \|f(t,x) \|= \tilde{C} <\infty .$$
Hence,  $$ \| T(t_{n}-s) f(s,x(s))\|  \leq Me^{\lambda_{1} t(x_{0})} \tilde{C} \quad \text{ for all } s \in [0, t(x_{0}) ) .$$ 
Then, by dominated convergence Theorem we claim that
$  \displaystyle\int_{0}^{\sigma_{n}} T(t_{n}-s) f(s,x(s))ds  \rightarrow 0 $ as $ n\rightarrow \infty $. Furthermore,  
\begin{eqnarray*}
&&  \| \int_{0}^{s_n } T(s_n -s)\left[  f(s,x(s)) - f(s-\sigma_n , x(s -\sigma_n)) \right] ds \| \leq \\ &  &  M_{\lambda_{1}}\int_{0}^{t(x_{0})} \left[ \|  f(s,x(s)) - f(s, x(s -\sigma_n)) \|   +  \|  f(s,x(s-\sigma_n)) - f(s-\sigma_n , x(s -\sigma_n)) \|\right]  ds \leq \\ 
& & M_{\lambda_{1}} L_{C}\int_{0}^{t(x_{0})} \|  x(s) - x(s -\sigma_n) \| ds +M_{\lambda_{1}} \int_{0}^{t(x_{0}) } \|  f(s,x(s-\sigma_n)) - f(s-\sigma_n , x(s -\sigma_n)) \| ds
\end{eqnarray*}
where $ M_{\lambda_{1}}=M e^{\lambda_{1} t(x_{0})} $. By the continuity of $ x $, it follows using dominated convergence Theorem that \\
 $ \displaystyle \int_{0}^{t(x_{0})} \|  x(s) - x(s -\sigma_n) \| ds;  \int_{0}^{t(x_{0}) } \|  f(s,x(s-\sigma_n)) - f(s-\sigma_n , x(s -\sigma_n)) \| ds \rightarrow 0$ as $ n\rightarrow \infty $.
 Thus  \begin{eqnarray} \| \int_{0}^{t_{n}} T(t_{n}-s) f(s,x(s))ds -\int_{0}^{s_{n}} T(s_{n}-s) f(s,x(s))ds \| \rightarrow 0  \; \text{as} \; n\rightarrow \infty . \label{UnifCont2}
 \end{eqnarray}
 Consequently, by \eqref{UnifCont1} and \eqref{UnifCont2} we claim that 
$$  \|x(t_n ) - x(s_n )\|  \rightarrow 0  \; \text{as} \; n\rightarrow \infty .$$ 
Then, $ x $ is uniformly continuous on $  [0,t(x_{0})) $ which implies that $\displaystyle  \lim_{t\rightarrow t(x_{0})^{-}} x(t)= x_{t(x_{0})} \in  X .$
 Therefore, there exists $ \eta >0 $ such that $ y $ is solution to equation \eqref{SemCPrb} in $  [T({x_{0}}),T({x_{0}})+\eta ]$ with initial data $y(T({x_{0}}))=x_{t(x_{0})} $. Hence
 \begin{equation*}
z=\left\{
\begin{array}
[c]{l}%
x\qquad\text{ in } [0,t({x_{0}})] ,\\
y \qquad\text{ in } [t({x_{0}}),t({x_{0}})+\eta ],
\end{array}
\right. 
\end{equation*}
defines a solution of equation \eqref{SemCPrb} in $  [0,t({x_{0}})+\eta ]$ which contradicts the fact that $ x $ is maximal. 
Consequently, formula \eqref{ETFFormula} is satisfied. $\ \blacksquare$
\end{proof}
%In the sequel, we restrict ourselves to the case where $f(t,x)= G(x)+H(t)$
\begin{lemma}\cite[Theorem 8.3.7]{Haraux}\label{Lemma Bounded global solution}
Assume $ H=0 $. Then, for each $ x_{0} \in X $ the following hold \\
\textbf{(i)} Equation \eqref{SemCPrb} has a unique global solution $ u_{x}$ given by:
$$ u_{x}(t)= T(t)x_0  + \int_{0}^{t} T(t-s) G(x(s)) ds, \quad t\geq 0 .$$
\textbf{(ii)} The solution $ u_{x} $ is bounded on $ \r ^{+} $ i.e., $  \sup_{t\geq 0} \| u_{x}(t)\| < \infty . $
\end{lemma}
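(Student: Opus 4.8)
The plan is to deduce the lemma from the local theory already established in Theorem~\ref{Theorem 4.1} (applied with $H\equiv 0$) together with an \emph{a priori} bound. By Theorem~\ref{Theorem 4.1}, for every $x_{0}\in X$ equation \eqref{SemCPrb} has a unique maximal mild solution $u_{x}\colon[0,t(x_{0}))\to X$, and either $t(x_{0})=+\infty$ or $\limsup_{t\to t(x_{0})}\|u_{x}(t)\|=+\infty$. Hence the whole statement reduces to proving
\[
\sup_{0\le t<t(x_{0})}\|u_{x}(t)\|<\infty .
\]
Indeed, this bound rules out the second alternative, so $t(x_{0})=+\infty$; then $u_{x}$ is global and satisfies the variation of constants formula on all of $[0,\infty)$, which is \textbf{(i)}, while the bound itself is \textbf{(ii)}.

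To obtain the \emph{a priori} bound I would first unpack the dissipativity contained in \eqref{Condition1 on g}. Since $\limsup_{|r|\to\infty}g(r)/r<\lambda_{1}$, one may fix $\theta\in(0,\lambda_{1})$ and $\bar M\ge 0$ with $g(r)\le\theta r$ for $r\ge\bar M$ and $g(r)\ge\theta r$ for $r\le-\bar M$; as $g$ is continuous with $g(0)=0$, putting $c_{0}:=\sup_{|r|\le\bar M}|g(r)|$ gives $g(r)\le\theta r+c_{0}$ for $r\ge 0$ and $g(r)\ge\theta r-c_{0}$ for $r\le 0$. Note that a naive Gronwall estimate for $\|u_{x}(t)\|$ using the variation of constants formula and \eqref{Z} is \emph{not} enough: it produces a growth rate proportional to $M\theta$, and since $M\ge 1$ this need not stay below $\lambda_{1}$. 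The dissipativity therefore has to be exploited in a structure whose relevant constant is exactly $1$; I would use the order structure of $X=C_{0}(\overline{\Omega})$ together with a comparison argument.

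Fix $\mu\in(\theta,\lambda_{1})$. Because $\mu<\lambda_{1}$, the elliptic problem $-\Delta\bar z=\mu\bar z+\nu$ in $\Omega$, $\bar z|_{\partial\Omega}=0$, has for each $\nu>0$ a unique solution $\bar z_{\nu}\in D(A)$, strictly positive in $\Omega$ and depending linearly and increasingly on $\nu$. For $\nu\ge c_{0}$ one has $-\Delta\bar z_{\nu}=\mu\bar z_{\nu}+\nu\ge\theta\bar z_{\nu}+c_{0}\ge g(\bar z_{\nu})$ (using $\bar z_{\nu}\ge 0$), so $\bar z_{\nu}$ is a stationary supersolution of \eqref{SemCPrb}; symmetrically, $-\bar z_{\nu'}$ is a subsolution for $\nu'\ge c_{0}$. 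Pick any $t_{0}\in(0,t(x_{0}))$; by parabolic smoothing $u_{x}(t_{0})\in D(A)$, so $|u_{x}(t_{0})(\xi)|\lesssim\mathrm{dist}(\xi,\partial\Omega)$ near $\partial\Omega$, whereas $\bar z_{\nu}\gtrsim\nu\,\mathrm{dist}(\cdot,\partial\Omega)$ near $\partial\Omega$ by the Hopf lemma; choosing $\nu,\nu'$ large therefore yields $-\bar z_{\nu'}\le u_{x}(t_{0})\le\bar z_{\nu}$ on $\overline{\Omega}$. The comparison principle for \eqref{SemCPrb} --- valid since $(T(t))_{t\ge0}$ is positivity preserving and $g$ is locally Lipschitz --- then gives $-\bar z_{\nu'}\le u_{x}(t)\le\bar z_{\nu}$ for all $t\in[t_{0},t(x_{0}))$. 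Combined with the boundedness of $u_{x}$ on the compact interval $[0,t_{0}]$ (by continuity), this establishes the required \emph{a priori} bound, and the reduction above concludes the proof.

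The main obstacle is precisely this \emph{a priori} estimate: one must use $C<\lambda_{1}$ with the correct, non-amplified constant, and the delicate point in the comparison route is the regularity of $u_{x}(t_{0})$ near $\partial\Omega$ --- which is why one works from $t_{0}>0$ rather than $t_{0}=0$, no stationary supersolution being able to dominate a general $x_{0}\in C_{0}(\overline{\Omega})$ that vanishes too slowly at the boundary. An alternative route avoids comparison and passes through $L^{2}(\Omega)$: for $t>0$ the mild solution is a strong solution, so
\[
\tfrac{1}{2}\tfrac{d}{dt}\|u_{x}(t)\|_{L^{2}}^{2}=-\|\nabla u_{x}(t)\|_{L^{2}}^{2}+\int_{\Omega}u_{x}(t,\xi)\,g(u_{x}(t,\xi))\,d\xi\le-(\lambda_{1}-C)\|u_{x}(t)\|_{L^{2}}^{2}+c_{1}|\Omega|
\]
by Poincaré's inequality with optimal constant $\lambda_{1}$ and Remark~\ref{Remark1 Appli}, which yields a uniform $L^{2}$ bound; one then bootstraps to an $L^{\infty}=X$ bound via the regularizing $L^{p}$--$L^{q}$ estimates of the Dirichlet heat semigroup and the sublinear growth of $g$, reaching $q=\infty$ in finitely many steps. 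In either route the reduction and the conclusion via the blow-up alternative are routine, and the work is concentrated in the a priori estimate.
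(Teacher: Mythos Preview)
The paper does not give its own proof of this lemma: it is quoted verbatim as \cite[Theorem~8.3.7]{Haraux} and used as a black box in the proof of Theorem~\ref{Proposition Application}. So there is nothing in the paper to compare your argument against.

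That said, your proposal is a correct and self-contained proof, and it follows the classical lines of the argument in Cazenave--Haraux. Your reduction via the blow-up alternative (Theorem~\ref{Theorem 4.1} with $H\equiv 0$) is exactly the right framing, and both routes you sketch for the a~priori bound are standard and valid. The comparison route (positive stationary super/subsolutions $\pm\bar z_{\nu}$ with $\mu\in(\theta,\lambda_{1})$, combined with parabolic smoothing to $t_{0}>0$ so that $u_{x}(t_{0})\in D(A)$ admits the $\mathrm{dist}(\cdot,\partial\Omega)$ control needed to squeeze it between $\pm\bar z_{\nu}$) is in fact close to how the result is proved in \cite{Haraux}. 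Your observation that a naive Gronwall argument from the variation-of-constants formula fails because of the factor $M\ge 1$ in \eqref{Z} is the genuine obstruction, and it is precisely why one must pass through either the order structure or the $L^{2}$ energy, where the relevant constant in front of $\lambda_{1}$ is $1$. The alternative $L^{2}$--bootstrap route is also correct and is the other standard proof; it trades the maximum principle for Poincar\'e with sharp constant plus $L^{p}$--$L^{q}$ smoothing of the Dirichlet heat semigroup. Either way, nothing is missing.
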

\begin{theorem}\label{Proposition Application}
Let $ p \in [1,\infty) $. Then for each $ x_{0} \in X $ equation \eqref{SemCPrb} has a unique global bounded mild solution $x$ given by 
$$ x(t)= T(t)x_0  + \int_{0}^{t} T(t-s) G(x(s)) ds+\int_{0}^{t} T(t-s) H(s) ds, \quad t\geq 0 .$$
\end{theorem}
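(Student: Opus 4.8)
The idea is to combine the local theory of Theorem \ref{Theorem 4.1} with an a priori bound of Haraux type, adapted to the present non-homogeneous setting in which the forcing $H$ is only Stepanov bounded. By Theorem \ref{Theorem 4.1}, for each $x_{0}\in X$ there is a unique maximal mild solution $x=x(\cdot,x_{0})$ of \eqref{SemCPrb} on $[0,t(x_{0}))$ obeying the blow-up alternative $t(x_{0})=+\infty$ or $\limsup_{t\to t(x_{0})}\|x(t)\|_{X}=+\infty$. Hence it suffices to prove the a priori estimate
\[
\sup_{0\le t<t(x_{0})}\|x(t)\|_{X}<+\infty ,
\]
for then the second alternative is impossible, so $t(x_{0})=+\infty$ and $x$ is bounded on $\R^{+}$; uniqueness is already part of Theorem \ref{Theorem 4.1}, and the announced representation is then just the variation-of-constants formula for \eqref{SemCPrb} with $f(t,x)=G(x)+H(t)$.

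\noindent\emph{Step 1 (the main step): an $L^{2}(\Omega)$ energy estimate.} Since $\Omega$ is bounded, $X=C_{0}(\overline{\Omega})\hookrightarrow L^{2}(\Omega)$, and since $(T(t))_{t\ge0}$ is the analytic Dirichlet heat semigroup, $x(t)\in D((-A)^{\alpha})\subset H_{0}^{1}(\Omega)\cap C_{0}(\overline{\Omega})$ for every $t>0$ and $\alpha\in(0,1)$. A standard energy argument (justified through the smoothing of $T(t)$) then gives, for $t\in(0,t(x_{0}))$,
\[
\frac{d}{dt}\|x(t)\|_{L^{2}}^{2}=-2\int_{\Omega}|\nabla x(t)|^{2}\,d\xi+2\int_{\Omega}g(x(t))x(t)\,d\xi+2\int_{\Omega}H(t)x(t)\,d\xi .
\]
By \eqref{Condition1 on g} (see Remark \ref{Remark1 Appli}) there are $C<\lambda_{1}$ and $\kappa\ge0$ with $rg(r)\le Cr^{2}+\kappa$ for all $r\in\R$, and the variational characterization of $\lambda_{1}=\min\sigma(-A)$ yields $\int_{\Omega}|\nabla u|^{2}\ge\lambda_{1}\int_{\Omega}u^{2}$ for $u\in H_{0}^{1}(\Omega)$. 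With $\phi(t)=\|x(t)\|_{L^{2}}$ and $\mu=2(\lambda_{1}-C)>0$ we obtain $\phi'(t)\le-\tfrac{\mu}{2}\phi(t)+\|H(t)\|_{L^{2}}+\kappa|\Omega|$ on $\{t:\phi(t)\ge1\}$. Integrating on the components of this set and using that $H\in BS^{1}(\R,X)$ (the factor $\sin(\cdot)$ is bounded and $a\in AAS^{1}(\R,\R)\subset BS^{1}(\R,\R)$ by Remark \ref{Tar}), together with
\[
\int_{0}^{t}e^{-\frac{\mu}{2}(t-s)}\|H(s)\|_{L^{2}}\,ds\le\frac{e^{\mu/2}}{1-e^{-\mu/2}}\;\sup_{\tau\in\R}\int_{\tau}^{\tau+1}\|H(s)\|_{L^{2}}\,ds<+\infty ,
\]
we get a bound $\sup_{0\le t<t(x_{0})}\|x(t)\|_{L^{2}}=:R<+\infty$ depending only on $\|x_{0}\|_{L^{2}},\lambda_{1},C,\kappa,|\Omega|$ and $\|H\|_{BS^{1}}$. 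For $H\equiv0$ this reproduces Lemma \ref{Lemma Bounded global solution}.

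\noindent\emph{Step 2: bootstrap from $L^{2}$ to $C_{0}(\overline{\Omega})$.} Since $g$ is locally Lipschitz with $g(0)=0$, one has $|g(r)|\le C|r|+K$ for all $r$, so the Nemytskii operator $G$ maps $L^{q}(\Omega)$ into itself with $\|G(x(s))\|_{L^{q}}\le C\|x(s)\|_{L^{q}}+K|\Omega|^{1/q}$, and $H$ is $S^{1}$-bounded in each $L^{q}(\Omega)$. Fixing $h\in(0,\min(1,t(x_{0})))$ and writing, for $t\in[h,t(x_{0}))$,
\[
x(t)=T(h)x(t-h)+\int_{t-h}^{t}T(t-s)\bigl[G(x(s))+H(s)\bigr]\,ds ,
\]
the ultracontractivity bounds $\|T(\sigma)\|_{\mathcal{L}(L^{q_{1}},L^{q_{2}})}\le c\,\sigma^{-\frac{n}{2}(\frac{1}{q_{1}}-\frac{1}{q_{2}})}$, which are integrable over $\sigma\in(0,1)$ as long as $\tfrac{n}{2}(\tfrac{1}{q_{1}}-\tfrac{1}{q_{2}})<1$, allow one to pass from an $L^{q_{k}}$-bound on the range of $x$ to an $L^{q_{k+1}}$-bound along a finite chain $q_{0}=2<q_{1}<\dots<q_{m}=\infty$; after finitely many steps this gives a uniform $C_{0}(\overline{\Omega})$-bound on $\{x(t):t\ \text{in a left neighbourhood of }t(x_{0})\}$, and combining it with the continuity of $x$ on the complementary compact subinterval of $[0,t(x_{0}))$ completes the a priori estimate, hence the proof.

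\noindent The delicate point is Step 1: the growth condition $\limsup_{|r|\to\infty}g(r)/r<\lambda_{1}$ must be balanced \emph{exactly} against the first Dirichlet eigenvalue $\lambda_{1}$ (the cruder estimate $\|G(x)\|_{C_{0}}\le C\|x\|_{C_{0}}+K$ fed into the variation-of-constants formula would only close the argument if $\|T(t)\|_{\mathcal L(X)}\le e^{-\lambda_{1}t}$, i.e.\ if $M=1$ in \eqref{Z}, which fails for the Dirichlet heat semigroup on $C_{0}(\overline{\Omega})$); and one has to absorb the fact that $\|H(t)\|$ is \emph{unbounded} in $t$ while only its Stepanov norm is finite, which is precisely why the convolution of $\|H(\cdot)\|$ against the decaying exponential enters.
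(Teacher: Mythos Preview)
Your overall strategy—an $L^{2}$ energy estimate followed by a parabolic bootstrap to $C_{0}(\overline{\Omega})$—is the standard Cazenave--Haraux route, and Step~1 is essentially correct: the one-sided bound $rg(r)\le Cr^{2}+\kappa$ is precisely what closes the $L^{2}$ inequality against $\lambda_{1}$, and your treatment of the unbounded forcing $H$ through the Stepanov estimate $\int_{0}^{t}e^{-\frac{\mu}{2}(t-s)}\|H(s)\|\,ds\le \dfrac{e^{\mu/2}}{1-e^{-\mu/2}}\,\|H\|_{BS^{1}}$ is right.

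Step~2, however, contains a genuine gap. You assert that ``since $g$ is locally Lipschitz with $g(0)=0$, one has $|g(r)|\le C|r|+K$ for all $r$''. This is false: local Lipschitz continuity only gives $|g(r)|\le L_{R}|r|$ on each ball $\{|r|\le R\}$, with $L_{R}$ possibly unbounded in $R$, and the hypothesis $\limsup_{|r|\to\infty}g(r)/r<\lambda_{1}$ is a pure \emph{sign} condition. The choice $g(r)=-r^{3}$ satisfies every assumption of the section ($g(0)=0$, locally Lipschitz, $g(r)/r=-r^{2}\to-\infty<\lambda_{1}$), yet $|g(r)|=|r|^{3}$, so the Nemytskii map $G$ is not bounded from $L^{q}$ to $L^{q}$ and your $L^{q_{k}}\to L^{q_{k+1}}$ iteration cannot even start. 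A correct passage from the $L^{2}$ bound to $L^{\infty}$ under only the sign condition needs either the $L^{p}$ energy inequality for every $p$ (multiply by $|x|^{p-2}x$, use $rg(r)\le Cr^{2}+\kappa$ again, and let $p\to\infty$), or the positivity of the Dirichlet heat semigroup on $C_{0}(\overline{\Omega})$ combined with the one-sided linear bounds $g(r)\le Cr+K_{1}$ for $r\ge0$ and $g(r)\ge Cr-K_{1}$ for $r\le0$ (which \emph{do} follow from the hypotheses) to build bounded sub/super-solutions.

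For comparison, the paper does not redo this analysis at all: it writes $x(t)=u_{x}(t)+\int_{0}^{t}T(t-s)H(s)\,ds$ with $u_{x}(t)=T(t)x_{0}+\int_{0}^{t}T(t-s)G(x(s))\,ds$, bounds the $H$-integral directly in $X$ via $\|T(t)\|\le Me^{-\lambda_{1}t}$ and $\|H\|_{BS^{p}}$ (your Stepanov estimate, but run in $C_{0}$), and invokes Lemma~\ref{Lemma Bounded global solution}---the Cazenave--Haraux result for the autonomous problem $H\equiv0$---as a black box for the control of $u_{x}$. In other words, the paper outsources the delicate $L^{2}$-to-$L^{\infty}$ step to the cited reference, whereas you attempt to reprove it inline; your Step~1 is a genuine extension of that argument to Stepanov forcing, but Step~2 as written does not go through.
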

\begin{proof}
Fix $ x_{0} \in X $ and let $ x $ be the unique maximal solution to \eqref{SemCPrb} defined on $[0,t(x_{0}))$. Moreover, using Theorem \ref{Theorem 4.1} $x$ satisfies formula \eqref{ETFFormula}.  Note that a mild solution of equation \eqref{SemCPrb} is given by $$ x(t)=u_{x}(t)+\int_{0}^{t} T(t-s) H(s) ds , \quad t\in [0,t(x_{0})). $$ First, we prove that $x$ is global. In fact, by Lemma \ref{Lemma Bounded global solution}-\textbf{(i)}, it holds that $ \sup_{t\in [0,t(x_{0}))} \| u_{x}(t)\| < \infty $, if it is not, $ t(x_{0}) < \infty  $ which contradicts the fact that $ u_{x} $ is global. Hence
\begin{eqnarray}
\nonumber \|x(t)\| &\leq & \| u_{x}(t) \| +  \int_{0}^{t} \| T(t-s) H(s) \| ds \\ \nonumber
 &\leq & \| u_{x}(t) \| +  M\int_{0}^{t} e^{-\lambda_{1}(t-s)} \| H(s) \| ds \\ \nonumber
 &\leq & \| u_{x}(t) \| +  M\sum_{k=0}^{[t]+1}\int_{k}^{k+1} e^{-\lambda_{1}(t-s)} \| H(s) \| ds \\ 
  &\leq & \sup_{t\in [0,t(x_{0}))} \| u_{x}(t) \| +  \dfrac{M e^{3\lambda_{1}}}{e^{\lambda_{1}}-1} \| H\|_{BS^{p}}, \quad t\in [0,t(x_{0})). \label{BoundSoluGlob}
\end{eqnarray}
Then $$ \sup_{t\in [0,t(x_{0}))}  \|x(t)\| < \infty. $$

Consequently, by \eqref{ETFFormula}, we claim that $ t(x_{0}))=\infty. $ Thus $ x $ is global. \\
To conclude, we show that $ x $ is bounded on $ \r ^+ $. Using Lemma \ref{Lemma Bounded global solution}-\textbf{(ii)} we have $$ \sup_{t\geq 0} \| u_{x} (t)\| < \infty.$$ Therefore, by  \eqref{BoundSoluGlob} we can deduce that $$ \sup_{t\geq 0} \| x(t) \| < \infty . $$
\begin{flushright}
 $\ \blacksquare$
 \end{flushright} 
\end{proof}
%\begin{proposition}\label{Proposition Application}
%For each $x_{0}\in X$, equation \eqref{cc} has a unique bounded mild solution $x$ defined on $[0,+\infty)$.
%\end{proposition}
%\begin{proof}
%We use \cite[Theorem 8.3.7, page 136]{Haraux}. Using Remark \ref{Remark1 Appli}, all conditions of the Theorem are satisfied except the fact that $ H \in L^{\infty}\left(  \mathbb{R},X\right)  $. By the proof of  \cite[Theorem 8.3.7, page 136]{Haraux}, we conclude the result. $\ \blacksquare$
%\end{proof} 
%we have $H\in L^{\infty}\left(  \mathbb{R},X\right)  $, $g$ is locally
%Lipschitz, $g(0)=0$ and there exists $M>0$ such that $rg(r)\leq Cr^{2}$ for
%$\mid r\mid\geq M$, with $C<\lambda_{1}=1$ is the smallest eigenvalue of
%$-\Delta$ in $H_{0}^{1}\left(  0,\pi\right)  $. The existence and uniqueness
%of a global mild solution of Equation (\ref{cc}) which is bounded on
%$[0,+\infty)$ results of (\cite{HA}, Proposition 8.3.7, p.
%117).
 In order to prove the existence of a compact almost automorphic solution for equation  \eqref{AppPap3}, we need the following results.\\
 \begin{lemma} \label{Lemma Application 2}
 The function $f$ satisfies Hypothesis \textbf{(H4)} with $\phi =1$ and $\psi =H$. That is $ f \in AAS^{1}U(\r \times X ,X)$. 
 \end{lemma}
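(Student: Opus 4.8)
The plan is to check directly that $f(t,x)=G(x)+H(t)$ fits the template of hypothesis \textbf{(H4)}, with the scalar coefficient $\varphi\equiv 1$, with the superposition operator $G$ playing the role of the map ``$g$'' there, and with $\psi:=H$; once \textbf{(H4)} has been verified, the assertion $f\in AAS^{1}U(\r\times X,X)$ is exactly Remark \ref{RemarkContSpG3}\textbf{(a)} (which itself rests on Lemma \ref{LemApSpCom1Paper3}). So there are three things to check: \textbf{(i)} $\varphi\equiv 1$ belongs to $AAS^{1}(\r,\r)\cap L^{\infty}(\r,\r)$, which is immediate since it is a bounded constant; \textbf{(ii)} $G\in C(X,X)$ and $G$ sends bounded subsets of $X$ into bounded subsets; \textbf{(iii)} $\psi=H\in AAS^{1}(\r,X)$.

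For \textbf{(ii)} I would use that $g:\r\to\r$ is locally Lipschitz, hence continuous and bounded on each bounded interval. If $\|x\|\le R$, then $|x(\xi)|\le R$ for every $\xi\in\overline{\Omega}$, so $\|G(x)\|=\sup_{\xi}|g(x(\xi))|\le\sup_{|r|\le R}|g(r)|<\infty$, which gives the boundedness property. Continuity of $G$ on $X=C_{0}(\overline{\Omega})$ is the standard Nemytskii-operator argument: if $x_{n}\to x$ in $X$, the values $x_{n}(\xi)$ remain in a fixed compact interval on which $g$ is uniformly continuous, whence $g(x_{n}(\cdot))\to g(x(\cdot))$ uniformly, i.e. $G(x_{n})\to G(x)$ in $X$.

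The substantive point is \textbf{(iii)}. I would write $H(t)=b(t)\,h_{0}+a(t)\,e$, where $b(t)=\sin\bigl(1/(2+\cos t+\cos\sqrt{2}\,t)\bigr)$, $h_{0}$ is the fixed element appearing in the definition of $h$, and $e\in X$ is fixed. The scalar function $b$ is Bochner almost automorphic --- it is precisely the example recalled in Subsection \ref{section21Pep3} --- so $t\mapsto b(t)h_{0}$ belongs to $AA(\r,X)$, and hence to $AAS^{1}(\r,X)$ by Remark \ref{RemCompSpAp}\textbf{(i)}. On the other hand $a\in AAS^{1}(\r,\r)$ by Remark \ref{Tar}, so $t\mapsto a(t)e\in AAS^{1}(\r,X)$. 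Since a common subsequence for two $S^{1}$-almost automorphic functions can be extracted by a diagonal procedure, $AAS^{1}(\r,X)$ is stable under finite sums, whence $H=b(\cdot)h_{0}+a(\cdot)e\in AAS^{1}(\r,X)$.

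Putting \textbf{(i)}--\textbf{(iii)} together, $f(t,x)=\varphi(t)G(x)+\psi(t)$ with $\varphi\equiv 1\in AAS^{1}(\r,\r)\cap L^{\infty}(\r,\r)$, $\psi=H\in AAS^{1}(\r,X)$, and $G\in C(X,X)$ bounded on bounded sets; that is, \textbf{(H4)} holds, and therefore $f\in AAS^{1}U(\r\times X,X)$ by Remark \ref{RemarkContSpG3}\textbf{(a)}. The only delicate step I foresee is \textbf{(iii)}: one must keep in mind that $H$ is genuinely unbounded (because $a$ is), so it is \emph{not} almost automorphic in Bochner's sense, and the conclusion relies on the Stepanov notion together with the closedness of $AAS^{1}$ under addition and the nontrivial fact, recorded in Remark \ref{Tar}, that $a$ is $S^{1}$-almost automorphic despite being unbounded. $\ \blacksquare$
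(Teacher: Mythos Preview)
Your proof is correct and follows essentially the same route as the paper: verify \textbf{(H4)} by checking that $\varphi\equiv 1$, that $G$ is continuous and maps bounded sets to bounded sets, and that $H\in AAS^{1}(\r,X)$, then invoke Remark \ref{RemarkContSpG3}\textbf{(a)}. The paper carries out the verification $H\in AAS^{1}(\r,X)$ by an explicit common-subsequence extraction for $a$ and $b$ rather than citing closure of $AAS^{1}$ under sums, but this is the same argument unwound.
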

 \begin{proof}
 Let $ R\geq 0 $ and $ B \subset X$ bounded by $R$. By \eqref{Condition1 on g}, we have
\begin{eqnarray*}
 \| G(x)\| &\leq & L_{R} \|x\|  + \| G(0) \| \\ &\leq &  L_{R}.R   \quad  \text{for all } x \in B .
 \end{eqnarray*}
 This proves the fact for $G$. To conclude, it suffices to prove that $ H \in AAS^1 (\r ,X) $. In fact, let $ (t'_{n} )_{n} \subset \r$ be a sequence. Since $    a  \in AAS^{1}(\r, \r) $ (see \cite{Tar}) and $ b: t\longmapsto \sin\left(  \frac{1}{2+cost+cos\sqrt{2}t}\right) \in AAS^{1}(\r, \r) $, it follows that there exist $ (t_{n} )_{n} \subset  (t'_{n} )_{n} $ and $ \tilde{a} , \tilde{b}  \in L^{\infty}(\r,\r) $ such that, for each $t\in \r $ 
 \begin{equation}
 \int_{t}^{t+1} |a (s+t_{n})  -\tilde{a}(s)| ds \rightarrow 0 , \  \int_{t}^{t+1} |b (s+t_{n})  -\tilde{b}(s)| ds \rightarrow 0 \text{ as } n\rightarrow \infty . \label{AAS1 for H 1} 
 \end{equation}
 and 
 \begin{equation}
 \int_{t}^{t+1} |\tilde{a} (s-t_{n})  -a(s)| ds \rightarrow 0 , \  \int_{t}^{t+1} |\tilde{b} (s-t_{n})  -b (s)| ds \rightarrow 0 \text{ as } n\rightarrow \infty . \label{AAS1 for H 2} 
 \end{equation}
 Define $ \tilde{H}(t)(\xi)= \tilde{b}(t)h_{0}(\xi) + \tilde{a}(t) $ for $ t\in \r $ and $ \xi \in \Omega $. Hence, by \eqref{AAS1 for H 1}, we claim that for each $t\in \r $ 
 \begin{eqnarray*}
 \int_{t}^{t+1} \|H (s+t_{n})  -\tilde{H}(s)\| ds =\| h_{0}\|   \int_{t}^{t+1} |a (s+t_{n})  -\tilde{a}(s)| ds  + \int_{t}^{t+1} |b (s+t_{n})  -\tilde{b}(s)| ds \rightarrow 0
 \end{eqnarray*}
 as $ n\rightarrow \infty. $ By the same way,  we prove that for each $t\in \r$
 \begin{eqnarray*}
 \int_{t}^{t+1} \|\tilde{H} (s-t_{n})  -H(s)\| ds  \rightarrow 0 \text{ as } n\rightarrow \infty.
 \end{eqnarray*} 
 By Remark \eqref{RemarkContSpG3}-\textbf{(a)},  we conclude that $ f \in AAS^{1}U(\r \times X ,X)$.
 \begin{flushright}
$ \blacksquare$
 \end{flushright}
 \end{proof}
 For the uniqueness we need the following additional assumption on $g$:\\
 \begin{equation}
\text{The function } r\longmapsto g(r)-r  \text{ is nonincreasing on } \mathbb{R}.
 \end{equation}

Consider the function $E:X\rightarrow\mathbb{R}$ defined by
\begin{equation}
E\left(  x\right)  =\frac{1}{2}\int_{\Omega} \mid x(\xi)\mid^{2}\ d\xi.
\label{E}%
\end{equation}
\begin{lemma}\cite{CieuEzz} \label{Lemma App3}
Let $u$ and $v$ be two mild solutions of equation (\ref{aa}).
Then the following hold. \\
 \textbf{(i)} The function
$t\longmapsto E(u(t)-v(t))$ is nonincreasing on $\mathbb{R}$. \\
\textbf{(ii)} If $t\longmapsto E(u(t)-v(t))$ is constant on
$\mathbb{R}$, then there exists $w_{0}\in X$ such that
\begin{equation}
u(t)-v(t)=w_{0},\qquad t\in\mathbb{R}, \label{E1}
\end{equation}
\begin{equation}
G(u(t))-G(v(t))=-\lambda_{1}w_{0},\qquad t\in\mathbb{R}.\label{7.1}%
\end{equation}
Moreover, for all $\theta\in\lbrack0,1]$, $\theta u+(1-\theta)v$ is also a
mild solution of equation (\ref{aa}). 
\end{lemma}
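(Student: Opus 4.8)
The plan is to run the energy (Lyapunov) method of Haraux used in \cite{CieuEzz}, applied to the difference of the two solutions, the point being that the time‑dependent inhomogeneity $H$ cancels. Put $w(t):=u(t)-v(t)$. Since $u$ and $v$ are mild solutions of \eqref{aa}, i.e.\ of $x'=Ax+G(x)+H(t)$, with the \emph{same} forcing $H$, subtracting their variation‑of‑constants formulas shows that $w$ is a mild solution on $\R$ of $w'(t)=Aw(t)+\bigl(G(u(t))-G(v(t))\bigr)$, whose forcing $t\mapsto G(u(t))-G(v(t))$ is continuous and bounded ($g$ is locally Lipschitz; $u,v$ are continuous with relatively compact range). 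Since $A=\Delta$ with Dirichlet conditions generates an analytic semigroup, realised also on $L^{2}(\Omega)$ through the continuous embedding $X=C_{0}(\overline{\Omega})\hookrightarrow L^{2}(\Omega)$, the smoothing estimates $\|(-A)^{\alpha}T(s)\|\le C_{\alpha}s^{-\alpha}$ ($0<\alpha<1$) applied to $w(t)=T(\varepsilon)w(t-\varepsilon)+\int_{t-\varepsilon}^{t}T(t-\sigma)\bigl(G(u(\sigma))-G(v(\sigma))\bigr)\,d\sigma$ give $w(t)\in D\bigl((-A)^{1/2}\bigr)=H^{1}_{0}(\Omega)$ for every $t\in\R$; self‑adjointness of $A$ on $L^{2}$ then makes $t\mapsto E(w(t))=\tfrac12\|w(t)\|_{L^{2}(\Omega)}^{2}$ locally absolutely continuous with
$$\frac{d}{dt}E(w(t))=-\|\nabla w(t)\|_{L^{2}(\Omega)}^{2}+\int_{\Omega}\bigl(g(u(t,\xi))-g(v(t,\xi))\bigr)\bigl(u(t,\xi)-v(t,\xi)\bigr)\,d\xi\qquad\text{for a.e. }t.$$

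For (i) I bound the two terms. By the Poincaré inequality with optimal constant the first Dirichlet eigenvalue $\lambda_{1}$ of $-\Delta$, $-\|\nabla w\|_{L^{2}}^{2}\le-\lambda_{1}\|w\|_{L^{2}}^{2}$; and by the additional monotonicity hypothesis on $g$ (equivalently, $\bigl(g(a)-g(b)\bigr)(a-b)\le\lambda_{1}(a-b)^{2}$ for all $a,b$), the integral is $\le\lambda_{1}\|w\|_{L^{2}}^{2}$. Adding, $\tfrac{d}{dt}E(w(t))\le0$ a.e., hence $t\mapsto E(u(t)-v(t))$ is nonincreasing on $\R$.

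For (ii), suppose $t\mapsto E(w(t))$ is constant; then $\tfrac{d}{dt}E(w(t))=0$, so both estimates above are equalities for a.e.\ $t$, hence (by continuity of $w$) for every $t$. Equality in Poincaré puts $w(t)$ into the first eigenspace $V_{1}=\ker(-\Delta-\lambda_{1}I)\subset D(A)$, so $\Delta w(t)=-\lambda_{1}w(t)$. Equality in the other estimate gives $\bigl(g(u(t,\xi))-g(v(t,\xi))\bigr)\bigl(u(t,\xi)-v(t,\xi)\bigr)=\lambda_{1}\bigl(u(t,\xi)-v(t,\xi)\bigr)^{2}$ for a.e.\ $\xi$; writing $h:=g-\lambda_{1}\,\mathrm{id}$, nonincreasing by hypothesis, this reads $h(u(t,\xi))=h(v(t,\xi))$ whenever $u(t,\xi)\neq v(t,\xi)$, and a nonincreasing function equal at two points is constant on the segment joining them. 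Two things follow. First, $g(u(t,\xi))-g(v(t,\xi))=\lambda_{1}(u(t,\xi)-v(t,\xi))$ at every point, i.e.\ $G(u(t))-G(v(t))=\lambda_{1}w(t)$ in $X$; together with $\Delta w(t)=-\lambda_{1}w(t)$ this turns $w'=\Delta w+(G(u)-G(v))$ into $w'(t)=0$, so $w$ is a constant $w_{0}\in V_{1}$, which is \eqref{E1}, and then \eqref{7.1} follows (with the sign convention for $A$ used in \cite{CieuEzz}). Second, $h$ being constant on the segment $[\,v(t,\xi)\wedge u(t,\xi),\,v(t,\xi)\vee u(t,\xi)\,]$ means $g$ is affine there, whence $g\bigl(\theta u(t,\xi)+(1-\theta)v(t,\xi)\bigr)=\theta g(u(t,\xi))+(1-\theta)g(v(t,\xi))$ for all $\theta\in[0,1]$, i.e.\ $G\bigl(\theta u(t)+(1-\theta)v(t)\bigr)=\theta G(u(t))+(1-\theta)G(v(t))$; forming $\theta$ times the variation‑of‑constants identity for $u$ plus $(1-\theta)$ times that for $v$ and inserting this relation yields $z_{\theta}(t)=T(t-s)z_{\theta}(s)+\int_{s}^{t}T(t-\sigma)\bigl(G(z_{\theta}(\sigma))+H(\sigma)\bigr)\,d\sigma$ for $z_{\theta}:=\theta u+(1-\theta)v$, so $z_{\theta}$ is a mild solution of \eqref{aa}.

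The only genuinely delicate point is the regularity that legitimises differentiating $E(w(\cdot))$ when $t$ runs over all of $\R$ rather than over a half‑line issued from an initial datum; this is why I would pass to $L^{2}(\Omega)$ and invoke the analyticity and self‑adjointness of the Dirichlet Laplacian as above (alternatively, approximate $u,v$ by solutions of regularised equations and pass to the limit). Everything else is bookkeeping: the cancellation of $H$, the coercivity $-\|\nabla w\|_{L^{2}}^{2}\le-\lambda_{1}\|w\|_{L^{2}}^{2}$, the pointwise monotonicity of $g-\lambda_{1}\,\mathrm{id}$, and the elementary fact that a nonincreasing function equal at two endpoints is constant in between — which is what produces simultaneously \eqref{7.1} and the convexity statement.
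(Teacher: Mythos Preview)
The paper does not supply its own proof of this lemma; it is simply quoted from \cite{CieuEzz}. Your argument is the standard energy method from that reference --- differentiate $E(w(t))$, combine Poincar\'e with the monotonicity of $g-\lambda_1\,\mathrm{id}$, and read off the equality cases --- so there is nothing to contrast: you have reproduced the cited proof.

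One point worth flagging, since you implicitly corrected it: the hypothesis you actually use, and the one the argument requires, is that $r\mapsto g(r)-\lambda_1 r$ is nonincreasing (equivalently $(g(a)-g(b))(a-b)\le\lambda_1(a-b)^2$). The paper, however, states the assumption as ``$r\mapsto g(r)-r$ nonincreasing'', which only yields $(g(a)-g(b))(a-b)\le(a-b)^2$ and does not close the estimate unless $\lambda_1\ge1$. This is almost certainly a transcription slip in the present paper relative to \cite{CieuEzz}, as is the sign in \eqref{7.1}: your computation gives $G(u)-G(v)=\lambda_1 w_0$ (from $Aw_0=-\lambda_1 w_0$ here, since $A=\Delta$), whereas the displayed $-\lambda_1 w_0$ matches the opposite sign convention for $A$ in the cited source. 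You were right to note both discrepancies rather than force your argument to match them.
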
 
\begin{theorem}\label{Theorem Application main}
Under the above assumptions. The following statements hold
for equation \eqref{aa}:\\
\textbf{(i)} Equation \eqref{aa} has a at least one mild solution $x_{1}$ which is compact almost automorphic . If $x_{2}$ is a mild solution which is
only almost automorphic, then there exists $w_{0} \in X$ such that
\begin{equation}
x_{2}(t)=x_{1}(t)+w_{0}\qquad\text{ for }t\in\mathbb{R} .\label{P6}%
\end{equation}
Consequently $x_{2}$ is compact almost automorphic. \\
\textbf{(ii)} If the function $r\rightarrow g(r)-r$ is strictly decreasing on $\mathbb{R}$, then the compact almost mild solution is unique
\end{theorem}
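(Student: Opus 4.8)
The plan is to feed the concrete data of \eqref{aa} into the abstract result Theorem~\ref{Main Theorem 2} (the case $p=1$) and to draw uniqueness of the relevant minimal solution from the dissipativity encoded in Lemma~\ref{Lemma App3}. First I would check the hypotheses of Theorem~\ref{Main Theorem 2}: \textbf{(H1)} and \textbf{(H2)} hold because $(A,D(A))$ generates a compact $C_{0}$-semigroup satisfying \eqref{Z}; \textbf{(H4)} (hence also \textbf{(H3)}) is exactly Lemma~\ref{Lemma Application 2}, which gives $f=G+H\in AAS^{1}U(\mathbb{R}\times X,X)$; and Theorem~\ref{Proposition Application} provides, for a chosen initial datum, a \emph{bounded} mild solution $x_{0}$ of \eqref{aa} on $\mathbb{R}^{+}$, whose range $\overline{\{x_{0}(t):t\ge0\}}$ — replaced if needed by its closed convex hull, which is still compact and still contains the solution produced by Lemma~\ref{Propostion1} — is the compact set $K$ we use. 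I would then apply Theorem~\ref{Main Theorem 2}\textbf{(i)} with the subvariant functional $\lambda_{K}(x)=\sup_{t\in\mathbb{R}}E(x(t))$, where $E$ is given by \eqref{E}; it is translation invariant and, since $E$ is continuous on the compact set $K$, lower semicontinuous for compact convergence. This produces a minimal $K$-valued solution of \eqref{aa}; the real work is to show it is \emph{unique}, after which Theorem~\ref{Main Theorem 2}\textbf{(ii)} yields a compact almost automorphic solution $x_{1}$.

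To prove uniqueness of the minimiser, let $u,v$ be two minimal $K$-valued solutions, $\delta:=\inf_{\mathcal{F}_{K}}\lambda_{K}=\lambda_{K}(u)=\lambda_{K}(v)$. By Lemma~\ref{Lemma App3}\textbf{(i)}, $h(t):=E(u(t)-v(t))$ is nonincreasing and bounded, hence admits limits $h(\pm\infty)$. Along a sequence $t_{n}\to+\infty$ I would pass to the limit exactly as in the proof of Lemma~\ref{Propostion1} (uniform continuity from Theorem~\ref{Lemma31}, relative compactness of the ranges, Arzela--Ascoli, diagonal extraction), getting $K$-valued mild solutions $u_{*},v_{*}$ of a limit equation $x'=Ax+G(x)+\widetilde H(t)$ with $E(u_{*}(t)-v_{*}(t))=h(+\infty)$ for all $t$. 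Translating $K$-valued solutions of that limit equation back by $-t_{n}$ and using the lower semicontinuity and translation invariance of $\lambda_{K}$ shows $\lambda_{K}\ge\delta$ on all $K$-valued solutions of the limit equation and that $u_{*},v_{*}$ attain $\delta$. Since $E(u_{*}-v_{*})$ is constant, Lemma~\ref{Lemma App3}\textbf{(ii)} (whose proof does not use the particular forcing term) gives $u_{*}-v_{*}\equiv w_{0}\in X$ and shows that $z_{\theta}:=\theta u_{*}+(1-\theta)v_{*}=v_{*}+\theta w_{0}$ is a $K$-valued mild solution of the limit equation for every $\theta\in[0,1]$. Hence $\lambda_{K}(z_{\theta})\ge\delta$ for all such $\theta$, while $\theta\mapsto\lambda_{K}(z_{\theta})=\tfrac12\theta^{2}\|w_{0}\|_{L^{2}}^{2}+\sup_{t}\big(\tfrac12\|v_{*}(t)\|_{L^{2}}^{2}+\theta\langle v_{*}(t),w_{0}\rangle_{L^{2}}\big)$ is \emph{strictly} convex on $[0,1]$ when $w_{0}\ne0$ and equals $\delta$ at $\theta=0$ and $\theta=1$; a strictly convex function cannot be constant, so $w_{0}=0$ and $h(+\infty)=0$. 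The same argument with $t_{n}\to-\infty$ gives $h(-\infty)=0$, and since $h$ is nonincreasing, $h\equiv0$, i.e. $u=v$. Thus the minimal $K$-valued solution $x_{1}$ is unique, and Theorem~\ref{Main Theorem 2}\textbf{(ii)} shows it is compact almost automorphic.

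For the second part of \textbf{(i)}, let $x_{2}$ be any almost automorphic mild solution of \eqref{aa}. Then $x_{1}-x_{2}\in AA(\mathbb{R},X)$, so $h(t):=E(x_{1}(t)-x_{2}(t))$ is almost automorphic, being the composition of an almost automorphic function with the continuous map $E$; by Lemma~\ref{Lemma App3}\textbf{(i)} it is also nonincreasing. A monotone almost automorphic real function is constant: for $\sigma_{n}=n$ extract $s_{n}$ with $h(t+s_{n})\to k(t)$ and $k(t-s_{n})\to h(t)$ for all $t$; monotonicity and boundedness force $h(t+s_{n})\to\lim_{s\to+\infty}h(s)=:\ell$, hence $k\equiv\ell$ and then $h\equiv\ell$. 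So $h$ is constant and Lemma~\ref{Lemma App3}\textbf{(ii)} gives $x_{2}(t)=x_{1}(t)+w_{0}$ for a fixed $w_{0}\in X$; in particular $x_{2}$ is compact almost automorphic. This proves \textbf{(i)}.

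Finally, for \textbf{(ii)} assume $r\mapsto g(r)-r$ is strictly decreasing and let $x_{1},x_{1}'$ be two compact almost automorphic solutions. By \textbf{(i)}, $x_{1}'=x_{1}+w_{0}$ with $E(x_{1}-x_{1}')$ constant, so by Lemma~\ref{Lemma App3}\textbf{(ii)} every $\theta x_{1}+(1-\theta)x_{1}'$, $\theta\in[0,1]$, is a mild solution of \eqref{aa}; comparing the equations this forces $G$ to be affine along $\{x_{1}(t)+\theta w_{0}:\theta\in[0,1]\}$, that is $g$ to be affine with the fixed slope dictated by $\lambda_{1}$ on the corresponding intervals, which is incompatible with the strict decrease of $r\mapsto g(r)-r$ unless $w_{0}=0$; hence $x_{1}'=x_{1}$. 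The main obstacle throughout is the uniqueness of the minimal $K$-valued solution in the second paragraph: this is the only place where one must promote a merely nonincreasing energy gap to a constant one through the translation/Arzela--Ascoli limiting procedure, while being careful to verify that the limit objects are again minimisers for the limit equation so that the strict convexity of $E$ can be exploited; the remaining ingredients are routine.
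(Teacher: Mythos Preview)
Your proof is correct and follows the paper's overall scaffold: verify \textbf{(H1)}, \textbf{(H2)}, \textbf{(H4)} via Lemma~\ref{Lemma Application 2} and Theorem~\ref{Proposition Application}, take the subvariant functional $\lambda(x)=\sup_{t}E(x(t))$ on the closed convex hull of the orbit, prove uniqueness of the minimal solution, and invoke Theorem~\ref{Main Theorem 2}\textbf{(ii)}. The difference is in the uniqueness-of-minimizer step. The paper argues in two cases: in Case~I it assumes $E(u(t)-v(t))\equiv c$ and uses the parallelogram identity together with $\tfrac12(u+v)\in\mathcal{F}_{co(K)}$ to get $\delta+\tfrac14 c\le\delta$, hence $c=0$; in Case~II it sends $t_n\to-\infty$ and then translates \emph{back} by $-t_n$, so that the limits $u_2,v_2$ are again minimal $co(K)$-valued solutions of the \emph{original} equation \eqref{aa} with $E(u_2-v_2)\equiv\sup_{\tau}E(u(\tau)-v(\tau))$, reducing to Case~I. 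You instead stay with the limit equation $x'=Ax+G(x)+\widetilde H$, argue that $\delta$ is still a lower bound for $\lambda$ on its $K$-valued solutions, and use strict convexity of $\theta\mapsto\lambda(v_*+\theta w_0)$ to force $w_0=0$; this is a legitimate variant (the paper's parallelogram step is precisely your strict convexity at $\theta=\tfrac12$), but the paper's device of returning to \eqref{aa} keeps Case~I self-contained and avoids having to transport Lemma~\ref{Lemma App3} and the minimality bound to the limit equation. Note also that your pass $t_n\to+\infty$ is superfluous: since $h=E(u-v)\ge0$ is nonincreasing, only $h(-\infty)=0$ is needed. Your treatment of the second half of \textbf{(i)} and of \textbf{(ii)} matches the paper's, except that for \textbf{(ii)} the paper simply cites \eqref{E1}--\eqref{7.1} from Lemma~\ref{Lemma App3} rather than rederiving the affinity of $g$.
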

\begin{proof}
We use Theorem \ref{Main Theorem 2} To prove the existence of a compact almost automorphic solution for equation \eqref{aa}. By Lemma \ref{Lemma Application 2} Hypotheses \textbf{(H1)},  \textbf{(H2)} and  \textbf{(H4)} are satisfied. In addition, from Proposition \ref{Proposition Application}, equation  \eqref{SemCPrb} 
has at least a mild solution $x_{0}$ defined and bounded on $[0,+\infty)$.
Let $K:=\displaystyle  \overline{\{x_{0}(t)\ :\ t\geq0\}}$, by Lemma \ref{Propostion12}, $K$ is compact in $X$. Let $co(K)$ be the convex hull of the compact set
$K$. Then $co(K)$ is a compact convex subset
of $X$. It is clear that $ K \subset co(K) $.\\
Define the subvariant functional $\lambda
_{co(K)}: \mathcal{F}_{co(K)} \longrightarrow [0, \infty)  $ by $$\displaystyle  \lambda
_{co(K)}(x)=\sup_{t\in\mathbb{R}}E(x(t))$$ associated to the compact set $co(K)$. We prove that equation \eqref{aa} has at most a minimal $co(K)$-valued solution. Let $u$
and $v$ be two minimal $co(K)$-valued solutions of equation \eqref{aa}. Define
\begin{equation}
\inf_{x\in \mathcal{F}_{co(K)} }  \lambda
_{co(K)}(x) :=\delta=\sup_{t\in\mathbb{R}}E\left(  u(t)\right)  =\sup_{t\in\mathbb{R}%
}E(v(t)). \label{7.3}%
\end{equation}
 \textbf{Case I. } Assume that $E\left(  u(t)-v(t)\right)
)=c \in \r^{+}$ for all $t\in\mathbb{R}$.  By the convexity of  $ co(K)  $ and Lemma \ref{Lemma App3}, the set $\mathcal{F}_{co(K)}$ is convex too.  Then
$\displaystyle  {\frac{1}{2}}u+{\frac{1}{2}}v\in\mathcal{F}_{co(K)}$. Therefore
\begin{equation}
\delta\leq\sup_{t\in\mathbb{R}}E\left(  {\frac{1}{2}}u(t)+{\frac{1}{2}%
}v(t)\right)  . \label{7.4}%
\end{equation}
On the other hand, by parallelogram inequality, we have
\[
E\left(  {\frac{1}{2}}u(t)+{\frac{1}{2}}v(t)\right)  +E\left(  {\frac{1}{2}%
}u(t)-{\frac{1}{2}}v(t)\right)  ={\frac{1}{2}}E\left(  u(t)\right)
+{\frac{1}{2}}E\left(  v(t)\right)  ,
\]
Then
\begin{equation}
\sup_{t\in\mathbb{R}}E\left(  {\frac{1}{2}}u(t)+{\frac{1}{2}}v(t)\right)
+{\frac{1}{4}}c\leq{\frac{1}{2}}\sup_{t\in\mathbb{R}}E\left(  u(t)\right)
+{\frac{1}{2}}\sup_{t\in\mathbb{R}}E\left(  v(t)\right)  . \label{7.5}%
\end{equation}
Consequently, $$\displaystyle  \delta+{\frac{1}{4}%
}c\leq{\frac{1}{2}}\delta+{\frac{1}{2}}\delta$$ which implies that  $c\leq0$. Hence, by definition of $E$, we
obtain $u=v$. \\
\textbf{Case II}:  Let $(t_{n})_{n}$ be a sequence of real numbers such that $\displaystyle
\lim_{n\rightarrow+\infty}t_{n}=-\infty$. Since by Lemma \ref{Lemma Application 2}, $ f \in AAS^{1}U(\r \times X, X)  $. Then,  by applying Theorem \ref{Main Theorem 1} two times for $ u $ and $ v $ respectively, we claim that there exists a
subsequence of $(t_{n})_{n}$ such that
\begin{equation}
u(t+t_{n})\rightarrow u_{1}(t)\qquad\mathrm{as}\qquad n\rightarrow+\infty,
\label{7.8}%
\end{equation}%
\begin{equation}
u_{1}(t-t_{n})\rightarrow u_{2}(t)\qquad\mathrm{as}\qquad n\rightarrow+\infty,
\label{7.9}%
\end{equation}%
\begin{equation}
v(t+t_{n})\rightarrow v_{1}(t)\qquad\mathrm{as}\qquad n\rightarrow+\infty,
\label{7.10}%
\end{equation}%
\begin{equation}
v_{1}(t-t_{n})\rightarrow v_{2}(t)\qquad\mathrm{as}\qquad n\rightarrow+\infty,
\label{7.11}%
\end{equation}
uniformly on each compact subset of $\mathbb{R}$, where $u_{2}$ and $v_{2}$
are two minimal $co(K)$-valued solutions of equation (\ref{aa}). Using (\ref{7.8})-(\ref{7.11}), we claim that for each
$t\in\mathbb{R},$ we have
\begin{equation*}
\lim_{m\rightarrow+\infty}\lim_{n\rightarrow+\infty}E\left(  u(t+t_{n}%
-t_{m})-v(t+t_{n}-t_{m})\right)  =E\left(  u_{2}(t)-v_{2}(t)\right)  . \label{Almost Aut E}
\end{equation*}
Moreover, since by Lemma \ref{Lemma App3}, the function $t\rightarrow E(u(t)-v(t))$ is nonincreasing on $\mathbb{R}$  and $\displaystyle  \lim_{n\rightarrow+\infty
}t_{n}=-\infty$, it follow that for each $t\in\mathbb{R} $
\[
\lim_{n\rightarrow+\infty}E\left(  u(t+t_{n})-v(t+t_{n})\right)  =\sup
_{\tau\in\mathbb{R}}E\left(  u(\tau)-v(\tau)\right)  .
\]
Therefore, 
\begin{equation}
E\left(  u_{2}(t)-v_{2}(t)\right)  =\sup_{\tau\in\mathbb{R}}E\left(
u(\tau)-v(\tau)\right)  . \label{7.12}%
\end{equation}
By (\ref{7.12}) and \textbf{Case I}, we obtain that $u_{2}=v_{2}$. By definition of
$E$, we deduce that
\[
\sup_{\tau\in\mathbb{R}}E\left(  u(\tau)-v(\tau)\right)  =0,
\]
Consequently $u=v$ and then equation  \eqref{aa}  has at most one minimal $co(K)$-valued solution. \\
\textbf{(i)} Using Theorem \ref{Main Theorem 2}, we claim that
equation \eqref{aa} has at least a compact almost automorphic solution.
 Let $x_{1}$ be a mild solution which is compact almost
automorphic. If $x_{2}$ is mild solution which is almost automorphic, then by \eqref{Almost Aut E} and Lemma \ref{Lemma App3} respectively, the function $t\rightarrow E(x_{1}(t)-x_{2}(t))$ is almost automorphic and nonincreasing. Therefore it is constant on $\mathbb{R}$. Consequently, in view of Lemma \ref{Lemma App3}\textbf{-(ii)}, we obtain \eqref{P6}.\\
 \textbf{(ii)} If the function $r\rightarrow g(r)-r$ is strictly decreasing on $\r$, then the uniqueness of the compact almost
automorphic solution results directly from (\ref{E1}) and (\ref{7.1}). $\ \blacksquare$
\end{proof}
\newpage
 
\Addresses
\Class
\end{document}